\numberwithin{equation}{section}
\theoremstyle{plain}
\newtheorem{theorem}{Theorem}[section]
\newtheorem{lemma}[theorem]{Lemma}
\newtheorem{proposition}[theorem]{Proposition}
\theoremstyle{definition}
\theoremstyle{remark}
\newtheorem{remark}[theorem]{Remark}
\newtheorem{notation}[theorem]{Notation}
\newtheorem{case[theorem]}{Case}
\def\norm#1.#2.{\lVert#1\rVert_{#2}}
\title[Orthonormal Strichartz estimates for Dunkl-Schr\"{o}dinger equation with Sobolev regularity]{Orthonormal Strichartz estimates for Dunkl-Schr\"{o}dinger equation of initial data with Sobolev regularity
}
\author{Guoxia Feng}
\author{Shyam Swarup Mondal} 
\author{Manli Song}
\thanks{Corresponding author: Manli Song (mlsong@nwpu.edu.cn)}
\author{Huoxiong Wu}
\address{\endgraf School of Mathematical Sciences, Xiamen University, Xiamen 361005, China}
\email{gxfeng@mail.nwpu.edu.cn}
\address{\endgraf  Stat-Math Unit, Indian Statistical Institute  Kolkata, BT Road,  Baranagar, Kolkata  700108, India }
\email{mondalshyam055@gmail.com}
\address{\endgraf School of Mathematics and Statistics, Northwestern Polytechnical University, Xi'an, Shaanxi 710129, China}
\email{mlsong@nwpu.edu.cn}
\address{\endgraf School of Mathematical Sciences, Xiamen University, Xiamen 361005, China}
\email{huoxwu@xmu.edu.cn}
\keywords{Dunkl Laplacian, Schr\"{o}dinger equation, Orthonormal Strichartz estimate, Lorentz spaces.}
\subjclass[2010]{Primary 22E25, 33C45, 35H20, 35B40.}
\date{\today}
\begin{document}
	
	\maketitle

	\allowdisplaybreaks

\begin{abstract}
Let $\Delta_\kappa$ be the Dunkl-Laplacian on $\mathbb{R}^n$. The main aim of this paper is to investigate the orthonormal Strichartz estimates for the Schrödinger equation with initial data from the homogeneous Dunkl-Sobolev space $\dot{H}_\kappa^s (\mathbb{R}^n)$. Our approach is based on restricted weak-type orthonormal estimates, frequency-localized estimates for the Dunkl-Schr\"odinger propagator $e^{it\Delta_\kappa}$, and a series of successive real and complex interpolation techniques. 
\end{abstract}
	\tableofcontents

	\section{Introduction}
The classical (single-function) Strichartz estimates for the free Schr\"{o}dinger propagator $e^{it\Delta}$ of initial data $f\in L^2( \mathbb{R}^n)$ on the Euclidean space $\mathbb{R}^n$, are stated as follows:
\begin{equation}\label{Lebesgue}
\|e^{it\Delta}f\|_{L^{2q}(\mathbb{R},L^{2p}(\mathbb{R}^n))}\lesssim \|f\|_{L^2(\mathbb{R}^n)},
\end{equation}
for all spatial dimensions $n\geq1$ and Schr\"{o}dinger admissible pairs $(p,q)$ satisfying $p,q\geq1$, $\frac{2}{q}+\frac{n}{p}=n$ and $(q,p,n)\neq (1,\infty,2)$, see \cite{Str}. The proof of these estimates is a combination of an abstract functional analysis argument known as the $TT^*$ duality argument and an $L^1\rightarrow L^\infty$ dispersive estimate, except the endpoint case $(q,p)=(1,\frac{n}{n-2})$ for $n\geq3$, which was proved by Keel-Tao \cite{KT}.

By a scaling argument and the Sobolev embedding theorem, it is well known that  under the  following  necessary condition
\begin{equation*}
\frac{2}{q}+\frac{n}{p}=n-2s\text{ and }s\in [0,\frac{n}{2}),
\end{equation*}
the estimate  \eqref{Lebesgue} reduces into    the  following  homogeneous Sobolev Strichartz estimate  
\begin{equation*} \|e^{it\Delta}f\|_{L^{2q}(\mathbb{R},L^{2p}(\mathbb{R}^n))}\lesssim \|f\|_{\dot{H}^s(\mathbb{R}^n)}.
\end{equation*} 
In fact, an alternative approach to obtain such estimates in the homogeneous Sobolev space is based on the Littlewood–Paley theory. More precisely, first  one can establish the estimates of the form \eqref{Lebesgue} for initial data $f\in L^2(\mathbb{R}^d)$
that are frequency localized to annuli and then upgrade to general data by putting together the estimates for each dyadic piece by the Minkowski inequality and the Littlewood–Paley square function theorem.

	In recent years, a significant attention has been devoted by numerous researchers to investigate  single-function Strichartz estimates   to versions involving systems of orthonormal functions taking the form
    \begin{equation}\label{abstract}
		\left\|\sum_{j=1}^\infty\lambda_j|e^{itL}f_j|^2\right\|_{L^q(\mathbb{R},L^p(\mathbb{R}^n))} \lesssim \|\{\lambda_j\}_{j=1}^\infty\|_{\ell^{\alpha}},
	\end{equation}
	for families of orthonormal data  $\{f_j\}_{j=1}^\infty$ in a given Hilbert space, such as the Lebesgue space $L^2(\mathbb{R}^n)$ or the homogeneous Sobolev space $\dot{H}^s(\mathbb{R}^n)$. Here  $\{\lambda_j\}_{j=1}^\infty \in \ell^{\alpha}$  and $ q, p, \alpha \in [1, \infty]$ satisfying some particular
	conditions. We now briefly summarize some particular known results of the form \eqref{abstract}.
\begin{itemize}
    \item Let $L=\Delta$ be the classical Laplacian. For initial data in the Lebesgue space $L^2(\mathbb{R}^n)$, the orthonormal Strichartz estimates \eqref{abstract} was pioneered by Frank–Lewin–Lieb–Seiringer \cite{FLLS}, and later extended by Frank-Sabin \cite{FS} for more general $p, q, \alpha$ with applications to the well-posedness of the Hartree equation in Schatten spaces.   They also  generalized the theorems of Stein-Tomas and Strichartz about surface restrictions of Fourier transforms to systems of orthonormal functions with an optimal dependence on the number of functions.
We point out that Frank-Sabin \cite{FS} established a crucial duality principle, which could rephrase Strichartz estimates for orthonormal families of initial data in terms of certain Schatten space
estimates.

     \item Let $L=\Delta$ be the classical Laplacian. For the initial data in the homogeneous Sobolev space $\dot{H}^s(\mathbb{R}^n)$ with the necessary scaling condition $\frac{2}{q}+\frac{n}{p}=n-2s$, it was established by Bez-Hong-Lee-Nakamura-Sawano \cite{Bez-Hong-Lee-Nakamura-Sawano}. We shall highlight that in the case of a single function, one can proceed by first proving the desired estimates for initial data in the Lebesgue space $L^2(\mathbb{R}^n)$
with localized frequency on dyadic annuli and upgrade to general data in the homogeneous Sobolev space $\dot{H}^s(\mathbb{R}^n)$ via Littlewood–Paley
theory. In contrast with the classical single-function Strichartz estimates, it seems difficult to upgrade the frequency global estimates from the frequency localized estimates in the case of generalised Strichartz estimates for orthonormal systems. Although they can easily obtain estimates of the form \eqref{abstract} with the frequency
localisation. The argument is based
on upgrading the frequency localized estimates to general data and is carried out via a
succession of interpolation arguments.

\item Let $L=\Delta_\kappa$ be the Dunkl Laplacian on $\mathbb{R}^n$.    For initial data in weighted $L^2(\mathbb{R}^n)$ space, the orthonormal Strichartz estimates of the form \eqref{abstract} were proved by Kumar-Pradeep-Mondal-Mejjaoli in \cite{shyam}  by considering  Strichartz's restriction theorem for the Fourier-Dunkl transform for certain surfaces, namely, paraboloid, sphere, and hyperboloid, and its generalization to the family of orthonormal functions. Also,    inspired by the duality principle in Frank-Sabin \cite{FS}, Mondal-Song \cite{MS} obtained orthonormal Strichartz estimates associated with  $(k, a)$-generalized Laguerre operator and Dunkl operator with the initial data in the $L^2$-weighted space $L^2_\kappa(\mathbb{R}^n)$.

    \item  Let $L$ be a non-negative, self-adjoint operator on $L^2(X,d\mu)$, where $(X,\mu)$ is a measure space. Recently, under the assumption that the kernel of the Schr\"{o}dinger propagator $e^{itL}$ satisfies a uniform $L^\infty$-decay estimate, Feng-Mondal-Song-Wu \cite{FMSW} establish the orthonormal Strichartz estimates for the Schr\"{o}dinger propagator $e^{itL}$ on $L^2(X,\mu)$, which extends the work of Keel-Tao \cite{KT} for a single function. 
\end{itemize}
For additional outcomes analogous to \eqref{abstract}, we refer the reader to the 
 recent papers, including \cite{Feng-Song, MS, Nguyen, nakamura, BEZ, Hoshiya, Ren-Zhang,Wang-Zhang-Zhang, FS1, GMS, shyam} and references therein.

  The orthonormal Strichartz estimates of the form \eqref{abstract} are important, particularly from two different perspectives. First, it represents a refinement of the classical Strichartz estimates for a single function, which have been utilized in the analysis of nonlinear Schrödinger equations for the last five decades.  Noting that by taking 
$f_j=0$ for all  $j\geq 2$  in the orthonormal Strichartz estimates \eqref{abstract}, we recover the classical single-function version.  Second, the estimates play a crucial role in the analysis of the Hartree equations, which describe systems of infinitely many fermions. This was first investigated in \cite{LS}, where local and global well-posedness was established under various conditions.  Note that the idea of generalizing classical inequalities from a single-function input to an orthonormal family is not a new topic, and the first pioneering work of this kind of generalization can be traced back to the work of  Lieb-Thirring  \cite{lieb, liebb} and this Lieb-Thirring inequality generalizes extended versions of certain Gagliardo–Nirenberg–Sobolev inequalities, which is considered to be one of the most important tools for the study of the stability of matter \cite{lieb}.  

Motivated by the recent works of Bez-Hong-Lee-Nakamura-Sawano in  \cite{Bez-Hong-Lee-Nakamura-Sawano},  the main aim of this paper is to investigate the orthonormal Strichartz estimates for the Schr\"odinger equation associated with the  Dunkl-Laplacian $\Delta_\kappa$  on $\mathbb{R}^n$   with initial data from the homogeneous Sobolev space $\dot{H}^s(\mathbb{R}^n)$.  Note that the 	Dunkl operators are differential-difference operators that generalize the standard partial derivatives.   These operators were introduced by Charles Dunkl \cite{dun1991} using a finite reflection group and a parameter function on its root system.   Applications of Dunkl operators in mathematics and mathematical physics are numerous. For example, they play an important role in the integration of quantum many-body problems of the Calogero-Moser-Sutherland type and are employed in the study of probabilistic processes. Additionally, these operators have had a lasting impression on the study of special functions in one and multiple variables as well as orthogonal polynomials,  see \cite{Dunkl book}.  

Let $\Delta_\kappa$ be the Dunkl Laplacian. We refer the reader 
 to  Section \ref{sec2} for detailed definitions of  $N=2\gamma+n$, $L_\kappa^p(\mathbb{R}^n)$ and $\dot{H}_\kappa^s(\mathbb{R}^n)$. The single-function Strichartz estimate for the Dunkl-Schr\"{o}dinger propagator $e^{it\Delta_\kappa}$ has been established by Mejjaoli  \cite{Mejjaoli2009} in the following form.
\begin{theorem}\label{Strichartz-single}
Assume $p,q\geq1$, $(q,p,N)\neq (1,\infty,2)$ and $\frac{2}{q}+\frac{N}{p}=N$. Then the Dunkl-Schr\"{o}dinger operator $e^{it\Delta_\kappa}$ is bounded from $L_\kappa^2(\mathbb{R}^n)$ to $L^{2q}(\mathbb{R}, L_\kappa^{2p}(\mathbb{R}^n))$, i.e.,
\begin{equation}\label{Single-Dunkl}
\|e^{it\Delta_\kappa}f\|_{L^{2q}(\mathbb{R},L_\kappa^{2p}(\mathbb{R}^n))}\lesssim \|f\|_{L_\kappa^2(\mathbb{R}^n)}.
\end{equation}
\end{theorem}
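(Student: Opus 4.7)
The plan is to follow Strichartz's classical two-step approach---an $L^1_\kappa\to L^\infty$ dispersive estimate combined with the $L^2_\kappa$ energy identity---and then invoke the abstract Keel--Tao machinery with effective dimension $N = 2\gamma + n$ to cover the full admissible range, including the endpoint $(q,p) = (1, \tfrac{N}{N-2})$ available when $N \geq 3$.

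The decisive input is the dispersive bound
\[
\|e^{it\Delta_\kappa} f\|_{L^\infty(\mathbb{R}^n)} \lesssim |t|^{-N/2} \|f\|_{L^1_\kappa(\mathbb{R}^n)}, \qquad t\neq 0.
\]
Since the Dunkl transform $\mathcal{F}_\kappa$ diagonalises $\Delta_\kappa$ through $\mathcal{F}_\kappa(e^{it\Delta_\kappa}f)(\xi) = e^{-it|\xi|^2}\mathcal{F}_\kappa f(\xi)$, the propagator is a Dunkl-convolution whose kernel is the Dunkl heat kernel $p^\kappa_z$ specialised to $z = it$. R\"osler's explicit formula for $p^\kappa_z$ gives $\|p^\kappa_t\|_{L^\infty} \lesssim t^{-N/2}$ for $t>0$, and I would transfer this to the imaginary axis via Stein's analytic interpolation applied to the family $\{e^{z\Delta_\kappa}\}_{\Re z \geq 0}$: the family is $L^2_\kappa$-bounded uniformly on $\Re z = 0$ by spectral calculus, and has kernel norm controlled by $(\Re z)^{-N/2}$ on $\Re z > 0$, so a three-lines argument yields the boundary estimate $\lesssim |\Im z|^{-N/2}$.

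For the second step, self-adjointness of $\Delta_\kappa$ on $L^2_\kappa(\mathbb{R}^n)$ gives the energy identity $\|e^{it\Delta_\kappa}f\|_{L^2_\kappa} = \|f\|_{L^2_\kappa}$. The family $U(t) = e^{it\Delta_\kappa}$ thus satisfies the two hypotheses of the abstract Keel--Tao Strichartz theorem with volume exponent $N$. For non-endpoint pairs, the conclusion follows from the $TT^*$ identity $(e^{it\Delta_\kappa})(e^{is\Delta_\kappa})^* = e^{i(t-s)\Delta_\kappa}$ together with a Hardy--Littlewood--Sobolev control of the resulting time convolution; the sharp endpoint is obtained from the Keel--Tao bilinear dyadic decomposition, which only invokes the two hypotheses above and the duality pairing and hence applies verbatim on the Dunkl measure space $(\mathbb{R}^n,\,w_\kappa(x)\,dx)$.

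The main obstacle sits entirely in Step~1. Unlike the Euclidean case, the Dunkl--Schr\"odinger kernel is not a pure Gaussian but is twisted by the Dunkl intertwiner $V_\kappa$, so its uniform pointwise control depends delicately on the reflection group $W$ and on the multiplicity function $\kappa$. Once the $|t|^{-N/2}$ sup-bound is established uniformly in $x,y$, the remainder of the argument is a direct transcription of the Euclidean proof, since Keel--Tao is an abstract black box that is insensitive to the underlying geometry beyond the scaling dimension $N$.
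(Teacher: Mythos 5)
The paper does not prove this theorem itself; it quotes it from Mejjaoli \cite{Mejjaoli2009}, and the standard proof there is exactly your two-step scheme: the $L^1_\kappa\to L^\infty$ dispersive estimate with decay $|t|^{-N/2}$, the $L^2_\kappa$ conservation law, and then $TT^*$ with Hardy--Littlewood--Sobolev for non-endpoint pairs and the abstract Keel--Tao theorem (which is measure-space agnostic) for the endpoint. So your overall architecture is the right one, and Steps 2--3 are fine as stated.

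The gap is in your derivation of the dispersive estimate. Stein's analytic interpolation produces bounds at \emph{interior} points of a strip from bounds on the two \emph{boundary} lines; you cannot combine a uniform $L^2_\kappa\to L^2_\kappa$ bound on $\Re z=0$ with a kernel bound $(\Re z)^{-N/2}$ that degenerates as $\Re z\to 0^+$ and conclude an $L^1_\kappa\to L^\infty$ bound \emph{on} the line $\Re z=0$ --- that endpoint is precisely what a three-lines argument cannot reach, and the blow-up of the interior bound at the boundary blocks any limiting argument of this type. Fortunately no interpolation is needed: the Dunkl transform maps Gaussians to Gaussians, so the Schr\"odinger kernel is computed explicitly (equation \eqref{Schrodinger} of the paper),
\begin{equation*}
K_{it}(x)=\mathcal{F}^{-1}_\kappa\bigl(e^{-it|\cdot|^2}\bigr)(x)=\frac{1}{c_\kappa |t|^{N/2}}\,e^{-i\frac{N\pi}{4}\,\mathrm{sgn}\, t}\,e^{i\frac{|x|^2}{4t}},
\end{equation*}
a chirp of constant modulus $c_\kappa^{-1}|t|^{-N/2}$ (justified by analytic continuation of the Gaussian identity tested against Schwartz functions). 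Your concern about the intertwiner $V_\kappa$ enters only through the \emph{translated} kernel $\tau_xK_{it}(y)$, and is resolved by the fact that $K_{it}$ is \emph{radial}: R\"osler's representation \eqref{translator} of $\tau_x$ on radial functions as integration against the probability measure $\rho^\kappa_x$ gives $|\tau_xK_{it}(y)|\le c_\kappa^{-1}|t|^{-N/2}$ uniformly in $x,y$ (this is \eqref{translator-decay} in the paper), whence the dispersive estimate. With Step 1 repaired in this way, the rest of your argument goes through.
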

By the Dunkl-Sobolev embedding relation \eqref{Sobolev-Embedding}, we immediately obtain estimates of the form \eqref{Single-Dunkl} in the homogeneous Dunkl-Sobolev space
\begin{equation}\label{Sobolev-single}
\|e^{it\Delta_\kappa}f\|_{L^{2q}(\mathbb{R},L_\kappa^{2p}(\mathbb{R}^n))}\lesssim \|f\|_{\dot{H}_\kappa^s(\mathbb{R}^n)},
\end{equation}
where $1\leq p<\infty,1\leq q\leq \infty$, $s\in [0,\frac{N}{2})$ and $\frac{2}{q}+\frac{N}{p}=N-2s$.

Recently, Mondal-Song \cite{MS} obtained a substantial generalization of \eqref{Single-Dunkl} for system of orthonormal
functions as follows. 
\begin{theorem} \label{M-S}
If $p,q,n\geq 1$ satisfy 
$1\leq p<\frac{N+1}{N-1}$, $\frac{2}{q}+\frac{N}{p}=N$ and $\alpha=\frac{2p}{p+1}$, then we have
\begin{equation*}
			\bigg\|\sum_{j=1}^\infty \lambda_j|e^{it\Delta_\kappa}f_j|^2\bigg\|_{L^q(\mathbb{R}, L_\kappa^p(\mathbb{R}^n))}\lesssim \|\{\lambda_j\}^\infty_{j=1}\|_{\ell^{\alpha}},
\end{equation*}	
  holds for all families of orthonormal functions $\{f_j\}_{j=1}^\infty$ in $L_\kappa^2(\mathbb{R}^n)$ and all sequences $\{\lambda_j\}^\infty_{j=1}$ in $\ell^{\alpha}$. 
\end{theorem}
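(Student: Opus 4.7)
My strategy follows the Frank--Sabin framework \cite{FS}, adapted to the Dunkl setting, and proceeds in three steps: a duality reduction, two endpoint Schatten bounds, and complex interpolation in Schatten classes.

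\emph{Step 1 (Duality).} I would first invoke the Frank--Sabin duality principle. Pairing the density $\rho(t,x)=\sum_j\lambda_j|e^{it\Delta_\kappa}f_j|^2$ against a non-negative weight $|W|^2$ and using the trace-formula identity for orthonormal systems, Theorem~\ref{M-S} becomes equivalent to the Schatten-class bound
\begin{equation*}
\big\|\,W(t,x)\,e^{i(t-s)\Delta_\kappa}\,\overline{W(s,y)}\,\big\|_{\mathfrak{S}^{\alpha'}(L^2(\mathbb{R}\times\mathbb{R}^n,\,dt\,d\mu_\kappa))} \,\lesssim\, \|W\|_{L^{2q'}(\mathbb{R},\,L_\kappa^{2p'}(\mathbb{R}^n))}^{2},
\end{equation*}
for the integral operator with the indicated Schwartz kernel. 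This step is purely Hilbert-space and carries over from the Euclidean case verbatim.

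\emph{Step 2 (Two endpoint Schatten bounds).} I would next establish the above Schatten bound at two extreme endpoints. The easy endpoint is at $p=1$, $\alpha'=\infty$: the required operator-norm bound follows at once from the $L_\kappa^2$-isometry of $e^{it\Delta_\kappa}$ and H\"older's inequality in $(t,x)$. The hard endpoint is of Stein--Tomas type and rests on the Dunkl dispersive estimate $\|e^{it\Delta_\kappa}f\|_{L_\kappa^\infty}\lesssim |t|^{-N/2}\|f\|_{L_\kappa^1}$ that underlies Mejjaoli's Theorem~\ref{Strichartz-single}. Interpolating this dispersive bound with the $L_\kappa^2$-isometry yields $\|e^{it\Delta_\kappa}\|_{L_\kappa^{(2p)'}\to L_\kappa^{2p}}\lesssim |t|^{-N(\tfrac{1}{2}-\tfrac{1}{2p})}$, and then Minkowski's inequality combined with Hardy--Littlewood--Sobolev in the time variable controls the Hilbert--Schmidt-type Schatten norm of the kernel from Step~1. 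The restriction $p<\tfrac{N+1}{N-1}$ is precisely the subcritical range for this HLS step under the admissibility $\tfrac{2}{q}+\tfrac{N}{p}=N$.

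\emph{Step 3 (Interpolation and main obstacle).} Complex interpolation in Schatten classes between the two endpoints of Step~2, together with standard complex interpolation of mixed $L^q_tL_\kappa^p$ spaces, delivers the desired Schatten bound on the full admissible segment with the exponent $\alpha=\tfrac{2p}{p+1}$; undoing the duality of Step~1 then gives Theorem~\ref{M-S}. The principal obstacle will be the Stein--Tomas endpoint, where one needs the sharp $|t|^{-N/2}$ dispersive decay of $e^{it\Delta_\kappa}$ together with a kernel representation adapted to the weighted Dunkl measure. Because Dunkl translations are nonlocal and no closed-form kernel for $e^{it\Delta_\kappa}$ is available for general reflection groups, the decay has to be extracted through the Dunkl transform and its Plancherel identity rather than from an explicit formula. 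Once this dispersive ingredient is in hand, the remaining Schur/HLS bookkeeping and the Schatten interpolation proceed along the Euclidean template.
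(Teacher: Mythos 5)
Your overall framework is the right one: Theorem~\ref{M-S} is quoted in the paper from Mondal--Song \cite{MS}, and both that proof and the paper's own proof of the closely related Lorentz refinement (Theorem \ref{Lorentz-refined}) run exactly through the Frank--Sabin duality principle (Lemma \ref{duality-principle}), two Schatten endpoint bounds, and interpolation. However, Step~2 as you describe it has a genuine gap. You propose to bound the Hilbert--Schmidt norm of ``the kernel from Step~1,'' i.e.\ of $W_1\,e^{i(t-s)\Delta_\kappa}\,W_2$ itself, using the dispersive decay and HLS in time. This fails: the pointwise kernel bound $|\tau_xK_{i(t-s)}(-y)|\lesssim|t-s|^{-N/2}$ gives
\begin{equation*}
\big\|W_1\,AA^*\,W_2\big\|_{\mathfrak{S}^2}^2\;\lesssim\;\int_{\mathbb{R}^2}\frac{\|W_1(\cdot,t)\|_{L^2_\kappa}^2\,\|W_2(\cdot,s)\|_{L^2_\kappa}^2}{|t-s|^{N}}\,dt\,ds,
\end{equation*}
and the Hardy--Littlewood--Sobolev inequality requires the time exponent to lie in $(0,1)$, which is impossible for any $N\geq1$. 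Moreover, even if some $\mathfrak{S}^2$ bound on the fixed operator were available, ordinary complex interpolation between an $\mathfrak{S}^\infty$ and an $\mathfrak{S}^2$ bound on the \emph{same} operator does not reach the sharp exponent $\alpha'=\tfrac{2p}{p-1}$ on the whole range $p<\tfrac{N+1}{N-1}$ (equivalently $\alpha'>N+1$).

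The missing idea is Stein's analytic interpolation applied not to $AA^*$ but to the analytic family $T_z$ with space--time multiplier $\frac{1}{\Gamma(z+1)}(\tau-|\xi|^2)_+^{z}$, for which $T_{-1}=AA^*$ and whose kernel is $ie^{iz\pi/2}(t-s+i0)^{-z-1}\tau_xK_{-i(t-s)}(-y)$; the factor $(t-s+i0)^{-z-1}$ lets you tune the time singularity to $|t-s|^{-\operatorname{Re}(z)-1-N/2}$. One then proves the $\mathfrak{S}^\infty$ bound at $\operatorname{Re}(z)=0$ by Plancherel (with $L^\infty_{t,x}$ weights), the $\mathfrak{S}^2$ bound on the line $\operatorname{Re}(z)=-\tfrac{2}{\tilde q}-\tfrac{N}{2}$ with $\tfrac{N+1}{2}<-\operatorname{Re}(z)\leq\tfrac{N+2}{2}$ (where HLS in $t$ is now admissible), and interpolates via Lemma \ref{interpolation schatten} down to $z=-1$. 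Your ``easy endpoint'' also needs this reformulation: $AA^*$ is not bounded on $L^2(\mathbb{R},L^2_\kappa(\mathbb{R}^n))$, so the trivial operator-norm bound must be stated for $T_{is}$, not for $AA^*$ with $L^\infty$ weights. With the analytic family in place, the rest of your outline (duality, dispersive estimate \eqref{dispersive}, HLS, and the resulting exponent $\alpha=\tfrac{2p}{p+1}$) is correct and matches the paper's treatment.
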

Our goal  is to establish orthonormal Strichartz estimates for the Dunkl-Schr\"odinger propagator  $e^{it\Delta_\kappa}$ in the homogeneous Dunkl-Sobolev spaces framework. In order to state our results precisely, first we establish some notation.
\begin{notation}
We introduce the following points:
\begin{equation*}
\begin{array}{llllll}
&O = (0,0),\quad & A = (\frac{N-1}{N+1},\frac{N}{N+1}), \quad & B = (1,0), \quad & C = (0,1), \\
& D = (\frac{N-2}{N},1), & E = (\frac{N-1}{2N},\frac{1}{2}), \quad & F  = (\frac{N}{N+2},\frac{N}{N+2}),\quad & G= (\frac{N}{2},0).
\end{array}
\end{equation*}  

For $N\geq 2$, see Figure 1. We point out that for $N=2$, the points $C$ and $D$ coincide. 
\begin{center}\begin{tikzpicture} \draw[thick,->, >=latex] (0,0) -- (0,7) node[left] {\(\frac{1}{q}\)};\draw[thick,->, >=latex] (-5/2,0) -- (7,0) node[below] {\(\frac{1}{p}\)};\node at (0,6) [above right]{\(1\)};\node at (6,0) [below]{\(1\)};\node at (0,0) [below]{\(O\)};\node at (0,6) [above left]{\(C\)};\node at (2,6) [above]{\(D\)};\node at (6,0) [above right]{\(B\)};\node at (18/5,18/5) [above right]{\(F\)}; \node at (3,9/2) [above right]{\(A\)};\node at (2,3) [above left]{\(E\)};\node at (0,3) [left]{\(\frac{1}{2}\)};\node at (2,0) [below]{\(\frac{N-1}{2N}\)};\node at (0,9/2) [left]{\(\frac{N}{N+1}\)};\node at (3,0) [below]{\(\frac{N-1}{N+1}\)};\node at (0,18/5) [left]{\(\frac{N}{N+2}\)};\node at (18/5,0) [below]{\(\frac{N}{N+2}\)};\node at (3,-1) [below] {\(\text{Figure 1: }N\geq2\)};\draw[thick] (6,0) -- (6,6);\draw[thick] (6,0) -- (2,6);\draw[thick] (0,6) -- (6,6);\draw[thick] (0,0) -- (3,9/2);\draw[dotted] (0,3)--(2,3);\draw[dotted] (2,0)--(2,3);\draw[dotted] (3,0)--(3,9/2);\draw[dotted] (0,9/2)--(3,9/2);\draw[dotted] (18/5,0)--(18/5,18/5);\draw[dotted] (0,18/5)--(18/5,18/5);\end{tikzpicture}\end{center}


For $1\leq N<2$, see Figure 2. We point out that for $N=1$, the points $A$, $E$ and $G$ coincide. 
\begin{center}
\begin{tikzpicture}
            \draw[thick,->, >=latex] (0,0) -- (0,7) node[left] {\(\frac{1}{q}\)};
            \draw[thick,->, >=latex] (-5/2,0) -- (7,0) node[below] {\(\frac{1}{p}\)};
            \node at (0,6) [above right]{\(1\)};
            \node at (6,0) [below]{\(1\)};
            \node at (0,4.5) [right]{\(G\)};
            \node at (0,0) [below]{\(O\)};
            \node at (0,6) [above left]{\(C\)};
            \node at (-2,6) [above]{\(D\)};
            \node at (6,0) [above right]{\(B\)};
            \node at (18/7,18/7) [above right]{\(F\)};
            \node at (6/5,18/5) [above right]{\(A\)};
            \node at (1,3) [above left]{\(E\)};
            \node at (0,3) [left]{\(\frac{1}{2}\)};
            \node at (1,0) [below]{\(\frac{N-1}{2N}\)};
            \node at (0,18/5) [left]{\(\frac{N}{N+1}\)};
            \node at (6/5,0) [below right]{\(\frac{N-1}{N+1}\)};
            \node at (0,18/7) [left]{\(\frac{N}{N+2}\)};
            \node at (18/7,0) [below]{\(\frac{N}{N+2}\)};
            \node at (3,-1) [below] {\(\text{Figure 2: }1\leq N<2\)};
            \draw[thick] (6,0) -- (6,6);
            \draw[thick] (6,0) -- (-2,6);
            \draw[thick] (-2,6) -- (6,6);
            \draw[thick] (0,0) -- (6/5,18/5);
            \draw[dotted] (0,3)--(1,3);
            \draw[dotted] (1,0)--(1,3);
            \draw[dotted] (0,18/5)--(6/5,18/5);
            \draw[dotted] (6/5,0)--(6/5,18/5);
            \draw[dotted] (18/7,0)--(18/7,18/7);
            \draw[dotted] (0,18/7)--(18/7,18/7);
        \end{tikzpicture}
\end{center}

For points $X_j \in \mathbb{R}^2$, $j=1,2,3,4$, we write
\begin{align*}
[X_1,X_2] & = \{ (1-t)X_1 + tX_2 : t \in [0,1]\} \\
[X_1,X_2) & = \{ (1-t)X_1 + tX_2 : t \in [0,1)\} \\
(X_1,X_2) & = \{ (1-t)X_1 + tX_2 : t \in (0,1)\}
\end{align*}
for line segments connecting $X_1$ and $X_2$, including or excluding $X_1$ and $X_2$ as appropriate. We write
$
X_1X_2X_3
$
for the convex hull of $X_1, X_2, X_3$, and
$
{\rm int}\, X_1X_2X_3
$
for the interior of $X_1X_2X_3$. Similarly,
$
X_1X_2X_3X_4
$
denotes the convex hull of $X_1, X_2, X_3,X_4$, and
$
{\rm int}\, X_1X_2X_3X_4
$
denotes the interior of $X_1X_2X_3X_4$. In particular,
\[
{\rm int}\, OAB = \bigg\{ \bigg(\frac{1}{p},\frac{1}{q}\bigg) \in (0,1)^2 : \text{$\frac{1}{q} < \frac{N}{(N-1)p}$ and $\frac{2}{q} + \frac{N}{p} < N$}\bigg\}
\]
and
\[
{\rm int}\, OCDA = \bigg\{ \bigg(\frac{1}{p},\frac{1}{q}\bigg) \in (0,1)^2 : \text{$\frac{1}{q} >\frac{N}{(N-1)p}$ and $\frac{2}{q} + \frac{N}{p} < N$}\bigg\}.
\]


Throughout the paper, we need the exponent $\alpha^*(p,q)$ determined by
\[
\frac{N}{\alpha^*(p,q)}=\frac{1}{q}+\frac{N}{p}.
\]
Note that if $\frac{2}{q} + \frac{N}{p} = N$ (corresponding to the case $s=0$) then $\alpha^*(p,q) = \frac{2p}{p+1}$, which is the exponent in Theorem \ref{M-S}. Also, note that if $(\frac{1}{p},\frac{1}{q})$ belongs to the line segment $[O,A]$, then $\alpha^*(p,q) = q$, and if $(\frac{1}{p},\frac{1}{q})$ belongs to the line segment $[O,B]$, then $\alpha^*(p,q) = p$. In addition, if if $(\frac{1}{p},\frac{1}{q})$ belongs to the region int $OAF$, then $\alpha^*(p,q)<q<p$.
\end{notation}
\subsection{Main results}First, we will prove the following frequency localized estimates of the form \eqref{abstract} for the Dunkl-Schr\"odinger propagator $e^{it\Delta_\kappa}$. In the following statement, $P$ is the operator defined by $\mathcal{F}_\kappa (Pf)(y)=\psi(|y|)\mathcal{F}_\kappa f(y)$, where $\mathcal{F}_\kappa$ is the Dunkl transform introduced in Subsection \ref{Dunkl transform} and $\psi$ is any smooth function on $\mathbb{R}$ supported in $[\frac{1}{2},2]$.
\begin{theorem} \label{frequency-localized} Suppose $N>2$.\\

$(1)$ If $(\frac{1}{p},\frac{1}{q})$ belongs to $ OAB\setminus A$  and $\alpha=\alpha^*(p,q)$, then
\begin{equation}\label{OAB}
			\bigg\|\sum_{j=1}^\infty \lambda_j|e^{it\Delta_\kappa}Pf_j|^2\bigg\|_{L^q(\mathbb{R}, L_\kappa^p(\mathbb{R}^n))}\lesssim_\psi \|\{\lambda_j\}^\infty_{j=1}\|_{\ell^{\alpha}} 
\end{equation}	
  holds for all families of orthonormal functions $\{f_j\}_{j=1}^\infty$ in $L_\kappa^2(\mathbb{R}^n)$ and all sequences $\{\lambda_j\}^\infty_{j=1}$ in $\ell^{\alpha}$. \\
  
$(2)$ If $(\frac{1}{p},\frac{1}{q})$ belongs to int $OCDA$ and $\alpha=q$, then the results \eqref{OAB} hold true.
\end{theorem}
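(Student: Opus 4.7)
The plan is to invoke the Frank-Sabin duality principle, which converts the desired orthonormal Strichartz estimate into a Schatten-class bound for the operator $g \mapsto W\, e^{it\Delta_\kappa}Pg$ (equivalently, for $W\,e^{it\Delta_\kappa}PP^*e^{-it\Delta_\kappa}\overline{W}$ acting on $L^2_tL^2_\kappa$), and then to establish the required bound at the vertices of the polygons $OAB$ and $OCDA$ before interpolating to cover their interiors. The vertex estimates at $O$ and $A$ will serve both parts.

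For part $(1)$, I would prove three vertex estimates on $OAB$. At $A$, where $\alpha = \tfrac{2p}{p+1} = \tfrac{N+1}{N}$, the bound is precisely Theorem \ref{M-S} applied to the frequency-localized data $\{Pf_j\}$. At $B = (1,0)$, where $p=1$, $q=\infty$ and $\alpha = 1$, Minkowski's inequality gives
\[
\bigg\|\sum_j \lambda_j|e^{it\Delta_\kappa}Pf_j|^2\bigg\|_{L^1_\kappa} \le \sum_j |\lambda_j|\,\|e^{it\Delta_\kappa}Pf_j\|^2_{L^2_\kappa} \lesssim \|\{\lambda_j\}\|_{\ell^1},
\]
using the $L^2_\kappa$-boundedness of $e^{it\Delta_\kappa}P$. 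At $O = (0,0)$, where $p=q=\infty$ and $\alpha = \infty$, Bessel's inequality yields $\sum_j|e^{it\Delta_\kappa}Pf_j(x)|^2 \le \|K^P_t(x,\cdot)\|_{L^2_\kappa}^2$, where $K^P_t$ is the kernel of $e^{it\Delta_\kappa}P$, and this is uniformly bounded in $(t,x)$ by Plancherel for the Dunkl transform and the compact support of $\psi$. Complex interpolation in the Schatten scale, via the duality principle, then yields \eqref{OAB} throughout $OAB\setminus A$ with $\alpha = \alpha^*(p,q)$.

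For part $(2)$, endpoint estimates on the upper edge $[C,D]$ of the polygon are also required. Near $C = (0,1)$, where $\alpha = q = 1$, Minkowski gives
\[
\bigg\|\sum_j \lambda_j|e^{it\Delta_\kappa}Pf_j|^2\bigg\|_{L^1_tL^\infty_\kappa} \le \sum_j |\lambda_j| \int_\R \|e^{it\Delta_\kappa}Pf_j\|_{L^\infty_\kappa}^2 \, dt,
\]
and the right-hand side is controlled by $\|\{\lambda_j\}\|_{\ell^1}$ once the single-function $L^2_\kappa \to L^2_tL^\infty_\kappa$ bound on frequency-localized data is in hand. The latter follows from a $TT^*$ argument and the dispersive decay $|K^P_t(x,y)| \lesssim \min(1,|t|^{-N/2})$ for $N>2$ on a fixed dyadic annulus. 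A parallel argument handles the remaining vertex $D$, which lies on the single-function admissible line $\tfrac{2}{q}+\tfrac{N}{p}=N$. Interpolating among $O, A, C, D$ in the Schatten setting then produces $\operatorname{int} OCDA$ with the desired $\alpha = q$.

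The principal obstacle lies in part $(2)$: inside $\operatorname{int} OCDA$ one has $q > \alpha^*(p,q)$, so the exponent $\alpha = q$ is strictly weaker than what a naive Schatten interpolation along $OAB$ would predict. Following the scheme of Bez--Hong--Lee--Nakamura--Sawano \cite{Bez-Hong-Lee-Nakamura-Sawano}, this obstruction is overcome by first deriving restricted weak-type versions of the endpoint estimates at $C$ and $D$ (which in the Schatten formulation become bounds into $S^{\alpha,\infty}$), and then applying real interpolation in off-diagonal Lorentz spaces rather than pure complex interpolation. A separate but more routine task is adapting the dispersive and kernel bounds for $e^{it\Delta_\kappa}P$ to the Dunkl measure $d\mu_\kappa$; once these are in place, the interpolation scheme parallels the classical Euclidean case.
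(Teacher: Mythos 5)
There is a genuine gap, and it sits exactly where the paper says the difficulty lies: the critical segment $[O,A)$, i.e.\ the boundary line $\tfrac1q=\tfrac{N}{(N-1)p}$ of the triangle $OAB$. Your plan for part (1) is to interpolate vertex estimates at $O$, $A$, $B$. But the vertex $A$ is excluded from the statement for a reason: the estimate with $\alpha=\tfrac{2p}{p+1}$ fails there even in the Euclidean case, and Theorem \ref{M-S} only covers $p<\tfrac{N+1}{N-1}$, i.e.\ the half-open segment $[B,A)$. Interpolating $O$ and $B$ with points of $[B,A)$ approaching $A$ yields the interior of $OAB$ and the edges $[O,B]$, $(B,A)$, but never touches the segment $(O,A)$ itself, which is part of the claimed region $OAB\setminus A$ and is also the input needed (with $\alpha=q=\alpha^*$ there) to reach ${\rm int}\,OCDA$ with $\alpha=q$ in part (2). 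In the paper this segment is handled by a completely different mechanism absent from your proposal: after dualizing, $e^{it\Delta_\kappa}(e^{it\Delta_\kappa})^*P^2$ is decomposed dyadically in time, $T_j=T_j^{(0)}+T_j^{(1)}$ according to whether $\bigl||x|-|y|\bigr|$ exceeds $2^{j+4}$; the off-cone piece is killed by stationary-phase/integration-by-parts bounds on the Bessel representation of the radial kernel $\tau_x e^{i(t-s)\Delta_\kappa}\Psi(-y)$, the on-cone piece is estimated via the dispersive bound together with a radial variant of Young's inequality (Lemma \ref{new inequality}, needed because the relevant kernel depends on $\bigl||x|-|y|\bigr|$ rather than on a genuine Dunkl convolution), and the dyadic sum is closed by a Keel--Tao bilinear \emph{real} interpolation in the weighted sequence spaces $\ell^s_\infty$. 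None of this is replaceable by vertex interpolation.

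Two further corrections. First, even on $[B,A)$ you cannot "apply Theorem \ref{M-S} to the data $\{Pf_j\}$": these functions are not orthonormal, and orthonormal Strichartz bounds do not transfer along non-unitary maps; the paper reproves the $[B,A)$ estimates from scratch (Theorem \ref{adimissible}) by running the Frank--Sabin analytic-family argument with $\psi^2(|\xi|)$ inserted, using the dispersive bound \eqref{cut-off decay} for the localized kernel. Second, your diagnosis of part (2) is inverted: in ${\rm int}\,OCDA$ one has $\alpha^*(p,q)>q$, so the claim $\alpha=q$ is the \emph{weaker} one, and it follows by straightforward complex interpolation between the trivial $\alpha=1$ bound on $[C,D]$ (your estimate there is fine) and the hard segment $[O,A)$; the restricted weak-type/Lorentz real-interpolation machinery you invoke belongs to the globalization step (Theorem \ref{globally} via Proposition \ref{upgrading}), not to the frequency-localized theorem.
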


For $1\leq N\leq 2$, the corresponding frequency-localized estimates are provided in the following remarks.
\begin{itemize}
    \item \label{N=2} Let $N=2$.  Then the points $C$ and $D$ coincide. If $(\frac{1}{p},\frac{1}{q})$ belongs to $ OAB\setminus A$  and $\alpha=\alpha^*(p,q)$, the results \eqref{OAB} hold true. Moreover,  if $(\frac{1}{p},\frac{1}{q})$ belongs to int $OAC$ with $\alpha<q$, the results \eqref{OAB} still hold.
    \item \label{1<N<2} For $1<N<2$, if $(\frac{1}{p},\frac{1}{q})$ belongs to $ OAB\setminus A$  and $\alpha=\alpha^*(p,q)$, then the results in equation \eqref{OAB} remain valid.
    \item \label{N=1} When  $N=1$, then the points $A$, $E$ and $G$ coincide. If  $(\frac{1}{p},\frac{1}{q})$ belongs to $ OAB$ but not on the segmen $[A,O),$  and $\alpha=\alpha^*(p,q)$, then the statement in \eqref{OAB} holds true.
\end{itemize}

It is worth noting that along the segment 
$[B, A)$,  the estimates provided in Theorem \ref{frequency-localized} remain valid based on the results of Frank–Sabin in \cite{FS}, even though the operator 
$P$ does not necessarily preserve orthonormality. However, along the segment 
$[C, D]$,  the estimates hold trivially with $\alpha=1$.  Therefore, to establish \eqref{OAB}, it is enough to verify it along the critical segment 
 $[O, A).$ The estimates in Theorem \ref{frequency-localized} along the critical segment $[O, A)$ are subtle, and we establish them using bilinear real interpolation, relying on the endpoint result provided in \cite{KT}.

While the frequency-localized estimates in Theorem \ref{frequency-localized} do not readily extend to general data, as they do in the single-function case, the following strong-type global frequency estimates can be obtained using the restricted weak-type global estimates  (see Proposition \ref{upgrading} in Subsection \ref{L-P section}) and through a series of successive real and complex interpolation arguments.
\begin{theorem}\label{globally}
$(1)$ Let $N\geq 1$. If $(\frac{1}{p},\frac{1}{q})$ belongs to int $OAB$, $\alpha=\alpha^*(p,q)$ and $2s=N-\left(\frac{2}{q}+\frac{N}{p}\right)$, then
\begin{equation*}
			\bigg\|\sum_{j=1}^\infty \lambda_j|e^{it\Delta_\kappa}f_j|^2\bigg\|_{L^q(\mathbb{R}, L_\kappa^p(\mathbb{R}^n))}\lesssim \|\{\lambda_j\}^\infty_{j=1}\|_{\ell^{\alpha}} 
\end{equation*}	
  holds for all families of orthonormal functions $\{f_j\}_{j=1}^\infty$ in $\dot{H}_\kappa^{s}(\mathbb{R}^n)$ and all sequences $\{\lambda_j\}^\infty_{j=1}$ in $\ell^{\alpha}$. 
  
$(2)$ Let $N\geq 2$. If $(\frac{1}{p},\frac{1}{q})$ belongs to int $OCDA$, $2s=N-\left(\frac{2}{q}+\frac{N}{p}\right)$ and $\alpha<q$, then
\begin{equation*}
			\bigg\|\sum_{j=1}^\infty \lambda_j|e^{it\Delta_\kappa}f_j|^2\bigg\|_{L^q(\mathbb{R}, L_\kappa^p(\mathbb{R}^n))}\lesssim_\phi \|\{\lambda_j\}^\infty_{j=1}\|_{\ell^{\alpha}} 
\end{equation*}	
  holds for all families of orthonormal functions $\{f_j\}_{j=1}^\infty$ in $\dot{H}_\kappa^{s}(\mathbb{R}^n)$ and all sequences $\{\lambda_j\}^\infty_{j=1}$ in $\ell^{\alpha}$.  
\end{theorem}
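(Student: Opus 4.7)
My plan is to deduce Theorem~\ref{globally} from the frequency-localized estimates of Theorem~\ref{frequency-localized} in two stages: first converting them into restricted weak-type \emph{global} estimates in appropriate Lorentz spaces (absorbing the Sobolev derivative through the scaling identity $2s=N-(\tfrac{2}{q}+\tfrac{N}{p})$), and second, upgrading restricted weak-type to strong-type via successive real and complex interpolation.

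For the first stage I would invoke Proposition~\ref{upgrading}. The standard move for orthonormal Sobolev--Strichartz inequalities is the following: if $P_k$ is a Littlewood--Paley projector to dyadic frequencies of size $2^k$, then $e^{it\Delta_\kappa}$ commutes with $P_k$, and a parabolic scaling turns the $\psi$-uniform frequency-localized estimate of Theorem~\ref{frequency-localized} into a bound on $\|\sum_j\lambda_j|e^{it\Delta_\kappa}P_k f_j|^2\|$ that carries a weight $2^{k(N-\frac{2}{q}-\frac{N}{p})}=2^{2ks}$. Summing in $k$ cannot be carried out directly as in the single-function case, so one reinterprets the orthonormal sum through the Frank--Sabin duality as a Schatten-norm bound on $W\,e^{it\Delta_\kappa}(-\Delta_\kappa)^{-s}\,W'$ and performs the dyadic summation inside that Schatten framework. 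The surviving estimate is a restricted weak-type bound of the form
\[
\Big\|\sum_j \lambda_j |e^{it\Delta_\kappa} f_j|^2\Big\|_{L^{q,\infty}(\R,\,L_\kappa^{p,\infty}(\R^n))} \lesssim \|\{\lambda_j\}\|_{\ell^{\alpha,\infty}},
\]
valid for $(\tfrac{1}{p},\tfrac{1}{q})$ in the appropriate closed subregions.

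In the second stage I would upgrade these restricted weak-type estimates to strong type. For part~(1), I would pick two auxiliary points inside $OAB$ at which the previous restricted weak-type bound holds---one slightly to each side of the target---together with an auxiliary endpoint bound, for instance the Frank--Sabin estimate along $[B,A)$ (which does not require $P$ to preserve orthonormality) or the trivial $\alpha=1$ bound along $[C,D]$. Real interpolation on the Lorentz scale, using $(\ell^{\alpha_0,\infty},\ell^{\alpha_1,\infty})_{\theta,\alpha}=\ell^\alpha$ when $\alpha_0<\alpha<\alpha_1$ and the analogous identity for $L^{q,\infty},L^{p,\infty}$, between two such weak-type bounds will land exactly on the strong $L^q_tL^p_x$--$\ell^{\alpha^*(p,q)}$ estimate throughout $\mathrm{int}\,OAB$. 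For part~(2), the additional slack $\alpha<q$ inside $\mathrm{int}\,OCDA$ lets me run a complex interpolation between the restricted weak-type bound along the critical ray through $A$ and the trivial $\ell^1$ bound along $[C,D]$, again with $s$ shifting according to the scaling identity so that the Sobolev exponent matches on each slab.

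The main obstacle I anticipate is the first stage, namely the dyadic-to-global passage. For single functions it is immediate from Littlewood--Paley square function theory, but that theory is unavailable for the orthonormal sum $\sum_j\lambda_j|\cdot|^2$, and the only substitute is to pass through Frank--Sabin duality to a Schatten-norm framework, where the dyadic summation nevertheless survives only on the weak-Lorentz level. This is precisely why Theorem~\ref{globally} is confined to the \emph{open} regions $\mathrm{int}\,OAB$ and $\mathrm{int}\,OCDA$ rather than their closures, and why both real and complex interpolation are needed in the second stage to recover the strong-type conclusion from the weak-type input.
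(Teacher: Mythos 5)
Your overall architecture (frequency-localized estimates $\to$ parabolic rescaling $\to$ dyadic summation to a restricted weak-type global bound $\to$ interpolation up to strong type) matches the paper's strategy, but there are two genuine gaps. First, the ``surviving estimate'' you claim after the dyadic summation is stated with $\ell^{\alpha,\infty}$ on the right-hand side. The summation device (Proposition \ref{upgrading}, applied after perturbing $q$ to $q_0,q_1$ so that the rescaled localized bounds \eqref{rescaling} carry factors $2^{\pm\varepsilon k}$) only delivers a \emph{restricted} weak-type bound, i.e.\ input space $\ell^{\alpha,1}$ and output $L^{q,\infty}(\mathbb{R},L^p_\kappa)$; claiming $\ell^{\alpha,\infty}$ input is claiming strictly more than the machinery provides, and the subsequent identity $(\ell^{\alpha_0,\infty},\ell^{\alpha_1,\infty})_{\theta,\alpha}=\ell^{\alpha}$ is therefore applied to hypotheses you do not have. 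Also, the summation is carried out at the level of the functionals $\sum_j\lambda_j|P_kg_j|^2$, not ``inside the Schatten framework'' as you suggest.

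Second, and more seriously, the passage from restricted weak type to strong type cannot be a single real interpolation between two weak-type bounds. The two auxiliary points must lie on the same scaling line $2s=N-(\tfrac2q+\tfrac Np)$, so \emph{both} $p$ and $q$ must vary; the mixed-norm real interpolation identity \eqref{L-qp} that identifies $(L^{q_0}(L^{p_0}_\kappa),L^{q_1}(L^{p_1}_\kappa))_{\theta,q}=L^q(L^{p,q}_\kappa)$ is only available with strong outer Lebesgue exponents, not with $L^{q_i,\infty}$. This forces the intermediate step you omit entirely: upgrading the restricted weak-type bounds to restricted \emph{strong}-type bounds in ${\rm int}\,OAF$ (Theorem \ref{restricted strong-type}) by complex interpolation with the Lorentz-refined estimate $L^{q_0,\alpha^*(p_0,q_0)}$--$\ell^{\alpha^*(p_0,q_0)}$ on the segment $(A,F]$ (Theorem \ref{Lorentz-refined}, itself one of the paper's key novelties, resting on the Hardy--Littlewood--Sobolev inequality in Lorentz spaces), with the interpolation parameter tuned to $\theta=1-\alpha^*(p_0,q_0)/q$ so that the output Lorentz index lands exactly on $q$. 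Only then does real interpolation between two restricted strong-type estimates give $\ell^{\alpha^*(p,q),q}\to L^q(L^{p,q}_\kappa)$, and the conclusion uses the ordering $\alpha^*(p,q)<q<p$ valid in ${\rm int}\,OAF$ together with the embeddings \eqref{embedding} to absorb the residual Lorentz indices; the remaining parts of ${\rm int}\,OAB$ and ${\rm int}\,OCDA$ then follow by interpolating with Theorem \ref{M-S} on $(A,B]$ and with the trivial $\alpha=1$ bound on $[C,D)$, respectively. Your sketch for part (2) also proposes complex interpolation with a restricted weak-type endpoint, which is not legitimate as stated. Without the Lorentz refinement and the restricted-strong-type intermediate step, the argument does not close.
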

\begin{remark} The key novelty of this paper can be seen as follows:
	\begin{itemize}
		\item 
		The Hardy-Littlewood-Sobolev inequality plays a crucial role in the proof of Theorem  \ref{Strichartz-single}. To sharpen the inequalities in Theorem \ref{Strichartz-single} for functions in Lebesgue spaces, we have to employ its refined form for functions in Lorentz spaces.
		\item 	
 In the proof of  the frequency localized estimates stated in 
 Theorem \ref{frequency-localized}, for a function $\Psi$ defined as  $\mathcal{F}_\kappa \Psi(\xi)=\psi^2(|\xi|)$, one of the crucial step is to observe that applying the Dunkl translation to  $e^{i(t-s)\Delta_\kappa} \Psi(-y)$ yields an estimate of the form
		\begin{equation*}
			\left|\tau_xe^{i(t-s)\Delta_\kappa} \Psi(-y)\right|\leq \frac{C_{\psi,M}}{\left(1+\big||x|-|y|\big|\right)^M}.
		\end{equation*}
  In contrast to the classical Laplacian setting, where the Young's convolution inequality can be applied, to deal with the integral 
		\begin{align*}	\int_{\mathbb{R}^{n}}\int_{\mathbb{R}^{n}}\frac{|W_1(x,t)|^2|W_2(y,s)|^2 }{\left(1+\big||x|-|y|\big|\right)^{M}}h_\kappa(x)h_\kappa(y)dxdy, 
		\end{align*}
		for  $W_1, W_2\in  L^{4}(\mathbb{R}, L_\kappa^{r}(\mathbb{R}^n))$, 
		we require a refined inequality analogous to Young's convolution inequality, which can be expressed as:
		Let $1\leq p,q,r\leq \infty$ satisfy $1+\frac{1}{r}=\frac{1}{p}+\frac{1}{q}$. Suppose $f=\tilde{f}(|\cdot|)$ is a radial function in $L_\kappa^p(\mathbb{R}^n)$ and $g\in L_\kappa^q(\mathbb{R}^n)$, then the function
		\begin{equation*}
			G(x)=\int_{\mathbb{R}^n}\tilde{f}\left(\big||x|-|y|\big|\right)g(y)h_\kappa(y) dy,
		\end{equation*}
		is a radial function in $L_\kappa^r(\mathbb{R}^n)$ satisfying
		\begin{equation*}
			\|G\|_{L_\kappa^r(\mathbb{R}^n)}\leq \|f\|_{L_\kappa^p(\mathbb{R}^n)} \|g\|_{L_\kappa^q(\mathbb{R}^n)}.  
		\end{equation*}

		\item 		Since the Dunkl Laplacian $\Delta_\kappa$ coincides with the classical Euclidean Laplacian $\Delta$ when $\kappa = 0$, our results extend the established orthonormal Strichartz inequalities for the Schrödinger propagator $e^{it\Delta}$ with initial data in Sobolev spaces, as proven by Bez-Hong-Lee-Nakamura-Sawano  in \cite{Bez-Hong-Lee-Nakamura-Sawano}.
     
	\end{itemize}
	
	\end{remark}

Apart from the introduction, the paper is organized as follows: 
\begin{itemize}
	\item In Section \ref{sec2}, we recall the harmonic analysis associated with the Dunkl operators.  In particular, we also recall some important properties of the Dunkl-Schr\"{o}dinger group $e^{it\Delta_\kappa}$, Lorentz spaces, some duality principles, and Littlewood-Paley projections associated with Dunkl operators. 
	\item  In Section \ref{sec3}, we point out some technical lemmas that are extremely important for the proof of our main results of this paper.
	\item   In Section \ref{sec4}, we investigate some refinements of known orthonormal Strichartz estimates but in Lorentz spaces. 
	\item    In Section \ref{sec5}, we prove frequency localized estimates for the Dunkl-Schr\"odinger propagator $e^{it\Delta_\kappa}$.
	\item   Section \ref{sec6} is devoted to proving the strong-type frequency global estimates by using the restricted weak-type global estimates  and a
	succession of  real and complex interpolation arguments.
\end{itemize}  
\noindent \textbf{Notations:}
\begin{itemize}
\item  $A\lesssim B$ means that $A\leqslant CB$, and $A\sim B$ stands for $C_1B\leq A\leq C_2B$, where $C$, $C_1$, $C_2$ denote positive universal constants.
\item $\Delta_\kappa$ denotes Dunkl-Laplacian on $\mathbb{R}^n$.
\item $ P_k$ denotes the Littlewood-Paley operator related to $\Delta_\kappa$.
\item $J_\nu$ denotes the Bessel
function of order $\nu.$ 
\item    $H_\kappa^{p,s}$ denotes the homogeneous Dunkl-Sobolev space.
\end{itemize}

\section{Preliminaries}\label{sec2}
In this section, we recall some basic definitions and important properties of the Dunkl operators and briefly overview the related harmonic analysis to make the paper self-contained. A complete account of harmonic analysis related to Dunkl operators can be found in \cite{ros,ben,Ratna3, Mejjaoli2009,MS,Jiu-Li}.  
	\subsection{Dunkl operators}\label{dunklop} The basic ingredient in the theory of Dunkl operators are root systems and finite reflection groups. We start this section with the definition of a root system. 
 
 Let $\langle\cdot, \cdot\rangle$  denote the standard Euclidean scalar product in $\mathbb{R}^{n}$. For $x \in \mathbb{R}^{n}$, we denote $|x|$ as $|x|=\langle x, x\rangle^{1 / 2}$.
	For $\alpha \in \mathbb{R}^{n} \backslash\{0\},$ we denote $r_{\alpha}$ as the reflection with respect to the hyperplane $\langle \alpha\rangle^{\perp}$, orthogonal to $\alpha$ and is defined by
	$$
	r_{\alpha}(x):=x-2 \frac{\langle \alpha, x\rangle}{|\alpha|^{2}} \alpha, \quad x \in \mathbb{R}^n.
	$$
	A finite set $\mathcal{R}$ in $\mathbb{R}^{n} \backslash\{0\}$ is  said to be a root system if the following holds:
	\begin{enumerate}
		\item $ r_{\alpha}(\mathcal{R})=\mathcal{R}$ for all $\alpha \in \mathcal{R}$,
		\item $\mathcal{R} \cap \mathbb{R} \alpha=\{\pm \alpha\}$ for all $\alpha \in \mathcal{R}$.
	\end{enumerate}
	For a given root system $\mathcal{R}$, the subgroup $G \subset O(n, \mathbb{R})$ generated by the reflections $\left\{r_{\alpha} \mid \alpha \in \mathcal{R}\right\}$ is called the finite Coxeter group associated with $\mathcal{R}$.  The dimension of $span \mathcal{R}$ is called the rank of $\mathcal{R}$.   For a detailed study on the theory of finite reflection groups, we refer to \cite{hum}. Let $\mathcal{R}^+:=\{\alpha\in\mathcal{R}:\langle\alpha,\beta\rangle>0\}$ for some $\beta\in\mathbb{R}^n\backslash\bigcup_{\alpha\in\mathcal{R}}\langle \alpha\rangle^{\perp}$,  be a fixed positive root system.

	Some typical examples of such a system are the Weyl groups such as the symmetric group $S_{n}$ for the type $A_{n-1}$ root system and the hyperoctahedral group for the type $B_{n}$ root system. In addition, $H_{3}, H_{4}$ (icosahedral groups), and $I_{2}(n)$ (symmetry group of the regular $n$-gon) are also Coxeter groups. 

	A multiplicity function for $G$ is a function $\kappa: \mathcal{R} \rightarrow \mathbb{C}$ which is constant on $G$-orbits. Setting $\kappa_{\alpha}:=\kappa(\alpha)$ for $\alpha \in \mathcal{R},$  from the definition of $G$-invariant,  we have $\kappa_{g \alpha}=\kappa_{\alpha}$ for all $g \in G$.  We say $\kappa$ is non-negative if $\kappa_{\alpha} \geq 0$ for all $\alpha \in \mathcal{R}$. The $\mathbb{C}$-vector space of non-negative multiplicity functions on $\mathcal{R}$ is denoted by $\mathcal{K}^{+}$. Let us denote  $\gamma$ as $\gamma=\gamma(\kappa):=\sum\limits_{\alpha\in\mathcal{R}^+}\kappa(\alpha)$. For convenience, we denote $N=2\gamma+n$ throughout the paper. 
 
 For $\xi \in \mathbb{C}^{n}$ and $\kappa \in \mathcal{K}^{+},$ Dunkl  in 1989 introduced a family of first-order differential-difference operators $T_{\xi}:= T_{\xi}(\kappa)$  by
	\begin{align}\label{dunkl}
		T_{\xi}(\kappa) f(x):=\partial_{\xi} f(x)+\sum_{\alpha \in \mathcal{R}^{+}} \kappa_{\alpha }\langle \alpha, \xi\rangle \frac{f(x)-f\left(r_{\alpha} x\right)}{\langle \alpha, x\rangle}, \quad f \in C^{1}\left(\mathbb{R}^{n}\right),
	\end{align}
	where $\partial_{\xi}$ denotes the directional derivative corresponding to $\xi$.  The operator $T_\xi$   defined in (\ref{dunkl}), formally known as Dunkl operator and is one of the most important developments in the theory of special functions associated with root systems \cite{dun}. They commute pairwise and are skew-symmetric with respect to the $G$-invariant measure $h_\kappa(x)dx$, where the weight function $$h_\kappa(x):=\prod\limits_{\alpha\in \mathcal{R}^{+}}|\langle \alpha, x\rangle|^{2 \kappa_{\alpha}}$$   is of 
 homogeneous degree $2\gamma$. Thanks to the $G$-invariance of the multiplicity function, the definition of the Dunkl operator is independent of the choice of the positive subsystem $\mathcal{R}^{+}$.  In \cite{dun1991},	it is shown that for any $\kappa\in\mathcal{K}^+$, there is a unique linear isomorphism $V_\kappa$ (Dunkl's intertwining operator) on the space $\mathcal{P}(\mathbb{R}^n)$ of polynomials on $\mathbb{R}^n$ such that
	\begin{enumerate}
		\item $V_\kappa\left(\mathcal{P}_m(\mathbb{R}^n)\right)=\mathcal{P}_m(\mathbb{R}^n)$ for all $m\in\mathbb{N}$,
		\item $V_\kappa|_{\mathcal{P}_0(\mathbb{R}^n)}=id$,
		\item $T_\xi(\kappa)V_\kappa=V_\kappa\partial_\xi$,
	\end{enumerate}
	where  $\mathcal{P}_m(\mathbb{R}^n)$ denotes the space of homogeneous polynomials of degree $m$. 
	
	For any finite reflection group $G$ and   any $k\in\mathcal{K}^+$, R\"{o}sler in \cite{ros} proved that there exists a unique positive Radon probability measure $\rho_x^\kappa$ on $\mathbb{R}^n$ such that
	\begin{equation}\label{239}	V_\kappa f(x)=\int_{\mathbb{R}^n}f(\xi)d\rho_x^\kappa(\xi).
	\end{equation}
	The measure $\rho_x^k$ depends on $x\in\mathbb{R}^n$ and its support is contained in the ball $B\left(\|x\|\right):=\{\xi\in\mathbb{R}^n: \|\xi\|\leq\|x\|\}$. In view of the Laplace type representation \eqref{239}, Dunkl's intertwining operator $V_\kappa$ can be extended to a larger class of function spaces.
	
	Let $\{\xi_1, \xi_2, \cdots, \xi_n\}$ be an orthonormal basis of $(\mathbb{R}^n , \langle \cdot, \cdot\rangle )$. Then  the Dunkl Laplacian operator $\Delta_\kappa $ is defined as $$\Delta_\kappa=\sum_{j=1}^nT^2_{\xi_j}(\kappa).$$
	The definition of  $\Delta_\kappa$ is independent of the choice of the orthonormal basis of $\mathbb{R}^n.$ In fact,   one can see that the operator $\Delta_\kappa $ also can be expressed as $$\Delta_\kappa f(x)=\Delta f(x)+\sum_{\alpha \in \mathcal{R}^+}\kappa_\alpha \left\{\frac{2 \langle \nabla f(x),  \alpha \rangle}{\langle \alpha, x\rangle } -|\alpha|^2 \frac{f(x)-f(r_\alpha x)}{\langle \alpha, x\rangle^2 } \right\}, \quad f\in C^1(\mathbb{R}^n),$$
	where $\nabla$ and $\Delta$ are    the usual gradiant  and   Laplacian operator on $\mathbb{R}^n$, respectively.  Observe that, for $\kappa \equiv 0$, the Dunkl Laplacian operator $\Delta_\kappa$ reduces to the classical Euclidean Laplacian $\Delta$. 

For every $y\in\mathbb{R}^n$, the system 
\begin{equation*}
	\begin{cases}
	T_\xi u(x,y)=\langle y,\xi\rangle\, u(x,y), \quad \xi\in\mathbb{R}^n,\\
	u(0,y)=1,
	\end{cases}
	\end{equation*}
admits a unique analytic solution on $\mathbb{R}^n$, which we denote as  $K(x,y)$ and is known as the Dunkl kernel in general. The kernel has a unique holomorphic extension to $\mathbb{C}^n\times \mathbb{C}^n$ and is given by
\begin{equation*}
K(x,y)=V_\kappa(e^{\langle \cdot,y\rangle})(x)=\int_{\mathbb{R}^n}e^{\langle \xi,y\rangle}d\rho_x^\kappa(\xi),\quad \forall x,y\in\mathbb{R}^n.
	\end{equation*}
When $\kappa \equiv 0$, the Dunkl kernel $K(x,y)$  reduces to the exponential $e^{\langle x,y\rangle}$. For $x,y\in\mathbb{R}^n$ and  $z,w\in\mathbb{C}^n$, the Dunkl kernel satisfies the following properties:
\begin{enumerate}[(i)]
    \item $K(z,w)=K(w,z)$ and $K(\lambda z,w)=K(z,\lambda w),\quad \lambda\in\mathbb{C}$;
    \item  for any $\nu\in\mathbb{N}^n$, we have
$$|D_z^\nu K(x,z)|\leq |x|^{|\nu|}e^{|Re(z)||x|},$$
where $D_z^\nu=\frac{\partial^{|\nu|}}{\partial {z_1}^{\nu_1}\cdots {z_n}^{\nu_n}}$ and $|\nu|=\nu_1+\cdots+\nu_n$. In particular, for any $x,y\in\mathbb{R}^n$, 
$$|K(x,iy)|\leq 1;$$
\item $K(ix,y)=\overline{K(-ix,y)}$ and $K(gx,gy)=K(x,y),\quad g\in G$.
\end{enumerate}

\subsection{Dunkl transform} \label{Dunkl transform}
For $1\leq p<\infty$, let $L_\kappa^p\left(\mathbb{R}^{n}\right)$ be the space of $L^p$-functions on $\mathbb{R}^{n}$ with respect to the weight $h_\kappa(x)$ with the $L_\kappa^p\left(\mathbb{R}^{n}\right)$-norm
\begin{equation*}
\|f\|_{L_\kappa^p\left(\mathbb{R}^{n}\right)}=\left(\int_{\mathbb{R}^{n}}|f(x)|^ph_\kappa(x)dx\right)^\frac{1}{p},
\end{equation*}
and $\|f\|_{L_\kappa^\infty\left(\mathbb{R}^{n}\right)}=ess \sup\limits_{x\in\mathbb{R}^n}|f(x)|$.
 The Dunkl transform is defined on $L_\kappa^1\left(\mathbb{R}^{n}\right)$ by
 \begin{equation}\label{Dunkltransform}
 \mathcal{F}_\kappa f(y)=\frac{1}{c_\kappa}\int_{\mathbb{R}^{n}} f(x) K(x,-iy)h_\kappa(x)dx,
 \end{equation}
 where $c_\kappa$ is the Mehta-type constant defined by
 \begin{equation*}
 c_\kappa=\int_{\mathbb{R}^{n}} e^{-\frac{|x|^2}{2}}h_\kappa(x)dx.
 \end{equation*}
When $\kappa \equiv 0$, the Dunkl transform coincides with the classical Fourier transform. The Dunkl transform plays the same role as the Fourier transform in classical Fourier analysis and enjoys properties similar to those of the classical Fourier transform. 
Here we list some basic properties of the Dunkl transform (see \cite{Dunklll, Jeu, Dai-Ye}).
\begin{enumerate}[(i)]
    \item  For all $f\in L_\kappa^1\left(\mathbb{R}^{n}\right)$, we have
\begin{equation*}
\|\mathcal{F}_\kappa f\|_{L_\kappa^\infty\left(\mathbb{R}^{n}\right)}\leq \frac{1}{c_\kappa} \|f\|_{L_\kappa^1\left(\mathbb{R}^{n}\right)}.
\end{equation*}
\item For any function $f$ in the Schwartz space $\mathcal{S}\left(\mathbb{R}^{n}\right)$, we have
\begin{equation*}
\mathcal{F}_\kappa(T_\xi f)(y)=i\langle \xi, y\rangle \mathcal{F}_\kappa f(y),\quad y,\xi\in\mathbb{R}^{n}.
\end{equation*}
In particular, it follows that 
\begin{equation*}
\mathcal{F}_\kappa(\Delta_\kappa f)(y)=-|y|^2 \mathcal{F}_\kappa f(y),\quad y\in\mathbb{R}^{n}.
\end{equation*}
\item  For all $f\in L_\kappa^1\left(\mathbb{R}^{n}\right)$, we have
\begin{equation*}
\mathcal{F}_\kappa (f(\cdot/\lambda))(y)=\lambda^N\mathcal{F}_\kappa f(\lambda y),\quad y\in\mathbb{R}^{n},
\end{equation*}for all $\lambda>0$.
\item The Dunkl transform $\mathcal{F}_\kappa$ is a homeomorphism of the Schwartz space $\mathcal{S}\left(\mathbb{R}^{n}\right)$ and its inverse is given by $\mathcal{F}_\kappa^{-1}g(x)=\mathcal{F}_\kappa g(-x),$ for all $ g\in \mathcal{S}\left(\mathbb{R}^{n}\right)$. In addition, for all $f\in \mathcal{S}\left(\mathbb{R}^{n}\right)$, it satisfies
\begin{equation*}
 \int_{\mathbb{R}^{n}} |f(x)|^2 h_\kappa(x)dx=\int_{\mathbb{R}^{n}} |\mathcal{F}_\kappa f(y)|^2 h_\kappa(y)dy.
 \end{equation*}
In particular, the Dunkl transform extends to an isometric isomorphism on $L_\kappa^2\left(\mathbb{R}^{n}\right)$. The Dunkl transform can be extended to the space of tempered distributions $\mathcal{S}'\left(\mathbb{R}^{n}\right)$ and is also a homeomorphism of $\mathcal{S}'\left(\mathbb{R}^{n}\right)$.
\item For every $f\in L_\kappa^1\left(\mathbb{R}^{n}\right)$ such that $\mathcal{F}_\kappa f\in L_\kappa^1\left(\mathbb{R}^{n}\right)$, we have the inversion formula
\begin{equation*}
f(x)=\frac{1}{c_\kappa}\int_{\mathbb{R}^{n}} \mathcal{F}_\kappa f(y) K(ix,y)h_\kappa(y)dy,\quad  a.e. \;x\in\mathbb{R}^{n}.
\end{equation*}
\item  If $f$ is a radial function in $ L_\kappa^1\left(\mathbb{R}^{n}\right)$ such that $f(x)=\tilde{f}(|x|)$, then
\begin{equation}\label{radial-Dunkl}
\mathcal{F}_\kappa f(y)=\frac{1}{\Gamma(N/2)}\int_0^\infty \tilde{f}(r)\frac{J_\frac{N-2}{2}(r|y|)}{\left(r|y|\right)^\frac{N-2}{2}}r^{N-1} dr,
\end{equation}
where $J_\nu$ denotes the Bessel function of order $\nu>-\frac{1}{2}$.

\item For $1\leq p\leq \infty$ and $s\in\mathbb{R}$, the homogeneous Dunkl-Sobolev space $\dot{H}_\kappa^{p,s}\left(\mathbb{R}^{n}\right)$ is the set of tempered distributions $u \in \mathcal{S}'\left(\mathbb{R}^{n}\right)$ such that $\mathcal{F}^{-1}_\kappa\left(|\cdot|^s \mathcal{F}_\kappa f\right)\in L_\kappa^p\left(\mathbb{R}^{n}\right)$ and is equiped with the norm 
\begin{equation*}
\|u\|_{\dot{H}_\kappa^{p,s}\left(\mathbb{R}^{n}\right)}=\left\|\mathcal{F}^{-1}_\kappa\left(|\cdot|^s \mathcal{F}_\kappa f\right)\right\|_{L_\kappa^p\left(\mathbb{R}^{n}\right)}.
\end{equation*}
In particular, when $p=2$, we denote $\dot{H}_\kappa^{p,s}\left(\mathbb{R}^{n}\right)$ by $\dot{H}_\kappa^{s}\left(\mathbb{R}^{n}\right)$ in short and $\dot{H}_\kappa^{0}\left(\mathbb{R}^{n}\right)=L_\kappa^2\left(\mathbb{R}^{n}\right)$.\\
 For any $1\leq p_1\leq p_2\leq\infty$ and $s_1\leq s_2 \in\mathbb{R}$ such that $s_1-\frac{N}{p_1}=s_2-\frac{N}{p_2}$, we also have the  continuous inclusion:  
\begin{equation}\label{Sobolev-Embedding}
\dot{H}_\kappa^{p_1,s_1} (\mathbb{R}^n)\subseteq \dot{H}_\kappa^{p_2,s_2} (\mathbb{R}^n).
\end{equation}

\end{enumerate}

\subsection{Dunkl convolution operator} For  given $x\in\mathbb{R}^n$, the Dunkl translation operator $f\mapsto \tau_x f$ is defined on $\mathcal{S}\left(\mathbb{R}^{n}\right)$ by
\begin{equation*}
\mathcal{F}_\kappa(\tau_x f)(y)=K(x,iy)\mathcal{F}_\kappa f(y),\quad y\in\mathbb{R}^n.
\end{equation*}
At the moment, the explicit formula of the Dunkl translation operator is known only in two cases. One is when $f(x)=\tilde{f}(|x|)$ is a continuous radial function in $L_\kappa^2(\mathbb{R}^n)$, the Dunkl translation operator is represented by (see \cite{ros, Dai-Wang})
\begin{equation} \label{translator}
\tau_xf(y)=V_\kappa[\tilde{f}\left(\sqrt{|y|^2+|x|^2-2\langle y,\cdot\rangle}\right)](x)=\int_{\mathbb{R}^n}\tilde{f}\left(\sqrt{|y|^2+|x|^2-2\langle y,\xi\rangle}\right)d\rho^\kappa_x(\xi).
\end{equation}
The Dunkl translation operator can also be extended to the space of tempered distributions $\mathcal{S}'\left(\mathbb{R}^{n}\right)$. It turns out to be rather difficult to extend important results in classical Fourier analysis to the setting of the Dunkl transform in general. One of the difficulties comes from the fact that the Dunkl translation operator is not positive in
general. In fact, even the $L_\kappa^p(\mathbb{R}^n)$-boundedness of $\tau_x$ is not established in general. On the other hand, when $f$ is a radial function in $L_\kappa^p(\mathbb{R}^n)$, $1\leq p\leq \infty$, the following holds (see \cite{GIT2019, Thangavelu-Xu2005})
\begin{equation*}
\|\tau_xf\|_{L_\kappa^p(\mathbb{R}^n)}\leq \|f\|_{L_\kappa^p(\mathbb{R}^n)}.
\end{equation*}
Using the Dunkl translation operator, we define the Dunkl convolution  of functions $f,g\in \mathcal{S}\left(\mathbb{R}^{n}\right)$ by
\begin{equation*}
f*_\kappa g(x)=\int_{\mathbb{R}^n}\tau_xf(-y)g(y)h_\kappa(y)dy, \quad x\in\mathbb{R}^n.
\end{equation*}
The Dunkl convolution satisfies the following properties (see \cite{Thangavelu-Xu2005}).
\begin{enumerate}
    \item  $\mathcal{F}_\kappa(f*_\kappa g)=\mathcal{F}_\kappa (f)\mathcal{F}_\kappa (g)$, $\mathcal{F}^{-1}_\kappa(f*_\kappa g)=\mathcal{F}^{-1}_\kappa (f)\mathcal{F}^{-1}_\kappa (g)$, and $f*_\kappa g=g*_\kappa f$.
\item Young's inequality: Let $1\leq p,q,r\leq\infty$ be such that $1+\frac{1}{r}=\frac{1}{p}+\frac{1}{q}$. If $f\in L^p_\kappa(\mathbb{R}^n)$ and $g$ is a radial function of $L_\kappa^q(\mathbb{R}^n)$, then $f*_\kappa g\in L_\kappa^r(\mathbb{R}^n)$ and we have
\begin{equation}\label{Young}
\|f*_\kappa g\|_{L_\kappa^r(\mathbb{R}^n)}\leq \|f\|_{L_\kappa^p(\mathbb{R}^n)} \|g\|_{L_\kappa^q(\mathbb{R}^n)}.
\end{equation}
\end{enumerate}

\subsection{Dunkl-Schr\"{o}dinger semigroup}
The Dunkl-Schr\"{o}dinger group $e^{it\Delta_\kappa}$ is unitary on $L^2_\kappa(\mathbb{R}^n)$ define by
\begin{equation*}
e^{it\Delta_\kappa}f(x)=\mathcal{F}^{-1}_\kappa\left(e^{-it|\cdot|^2} \mathcal{F}_\kappa f\right)(x)=\mathcal{F}^{-1}_\kappa\left(e^{-it|\cdot|^2}\right)*_\kappa f(x)=K_{it}*_\kappa f(x),
\end{equation*}
where the Dunkl-Schr\"{o}dinger kernel
\begin{equation}\label{Schrodinger}
K_{it}(x)=\mathcal{F}^{-1}_\kappa\left(e^{-it|\cdot|^2}\right)(x)=\frac{1}{c_\kappa |t|^\frac{N}{2}}e^{-i\frac{N\pi}{4} sgn t}e^{i\frac{|x|^2}{4t}}.
\end{equation}
Noting that the Dunkl-Schr\"{o}dinger kernel $K_{it}$ is radial, from \eqref{translator}, we have 
\begin{equation}\label{translator-decay}
\left|\tau_x K_{it}(y)\right|\leq \frac{1}{c_\kappa |t|^\frac{N}{2}},\forall x,y\in\mathbb{R}^n.
\end{equation}
If $f\in L_\kappa^1(\mathbb{R}^n)$, we have the dispersive estimate
\begin{equation*}
\|e^{it\Delta_\kappa}f\|_{L_\kappa^\infty(\mathbb{R}^n)}\leq \frac{1}{c_\kappa |t|^\frac{N}{2}}\|f\|_{L_\kappa^1(\mathbb{R}^n)}.
\end{equation*}
As the Dunkl-Schr\"{o}dinger operator $e^{it\Delta_\kappa}$ is unitary on $L^2(\mathbb{R}^n)$, combined with the above dispersive estimate,  for any $2\leq p\leq \infty$, interpolation gives
\begin{equation}\label{dispersive}
    \|e^{it\Delta_\kappa}g\|_{L_\kappa^{p}(\mathbb{R}^n)}\lesssim \frac{1}{|t|^\frac{N}{2p^\prime}} \|g\|_{L_\kappa^{p^\prime}(\mathbb{R}^n)}.
\end{equation}
By Theorem \ref{Strichartz-single}, for $p,q\geq1$, $(q,p,N)\neq (1,\infty,2)$ and $\frac{2}{q}+\frac{N}{p}=N$, the Dunkl-Schr\"{o}dinger operator $e^{it\Delta_\kappa}$ is bounded from $L_\kappa^2(\mathbb{R}^n)$ to $L^{2q}(\mathbb{R}, L_\kappa^{2p}(\mathbb{R}^n))$. By duality, the adjoint operator $\left(e^{it\Delta_k}\right)^*$ of the Dunkl-Schr\"{o}dinger operator $e^{it\Delta_\kappa}$ defined by
\begin{equation*}
\left(e^{it\Delta_k}\right)^*g(x)=\int_{\mathbb{R}}e^{-it\Delta_\kappa} g(x,t)dt,
\end{equation*}
is bounded from $L^{(2q)^\prime}(\mathbb{R}, L_\kappa^{(2p)^\prime}(\mathbb{R}^n))$ to $L_\kappa^2(\mathbb{R}^n)$ and the composition of the maps $e^{it\Delta_\kappa}$ and $\left(e^{it\Delta_k}\right)^*$ defined by
\begin{equation*}
e^{it\Delta_k}\left(e^{it\Delta_k}\right)^*g(x)=\int_{\mathbb{R}}e^{i(t-s)\Delta_\kappa} g(x,s)ds=\int_\mathbb{R} \int_{\mathbb{R}^n}\tau_xK_{i(t-s)}(-y)g(y,s)h_\kappa(y)dyds
\end{equation*}
is bounded from $L^{(2q)^\prime}(\mathbb{R}, L_\kappa^{(2p)^\prime}(\mathbb{R}^n))$ to $L^{2q}(\mathbb{R}, L_\kappa^{2p}(\mathbb{R}^n))$.

\begin{remark}
The Dunkl kernel of the localised  Dunkl-Schr\"{o}dinger operator $e^{it\Delta_\kappa}P$ is also radial and given by $\mathcal{F}^{-1}_\kappa (e^{-it|\cdot|^2} \psi(|\cdot|))$ satisfying the following dispersive estimate, i.e.,
\begin{equation}\label{cut-off decay}
   \left|\tau_x\mathcal{F}^{-1}_\kappa \left(e^{-it|\cdot|^2} \psi(|\cdot|)\right)(y)\right|\lesssim_\psi |t|^{-\frac{N}{2}}, \quad \forall x,y\in\mathbb{R}^n,
\end{equation}
which follows from Young's inequality \eqref{Young}, \eqref{translator} and \eqref{Schrodinger}.
\end{remark}

\subsection{Lorentz spaces}
In this subsection, we introduce the Lorentz space $L_\mu^{p,r}(\mathbb{R}^n)$ and the Lorentz sequence space $\ell^{p,r}$, where $\mu$ is a positive measure on $\mathbb{R}^n$. We refer the reader to \cite{Stein-Weiss} for further details.

First, let us recall some facts about the rearrangement argument. Suppose $f$ is a complex-valued $\mu$-measurable function on $\mathbb{R}^n$ such that $\mu\left(\{x\in\mathbb{R}^n: |f(x)|>t\}\right)=\int_{\{x\in\mathbb{R}^n: |f(x)|>t\}}d\mu(x)<+\infty$ for every $t>0$. The decreasing rearrangement function $f^*$ of $f$ is defined by
\begin{equation*}
f^*(t)=\inf\{s>0: a_f(s)\leq t\},
\end{equation*}
where $a_f$ is the distribution function of $f$ defined by
\begin{equation*}
a_f(t)=\mu\left(\{x\in\mathbb{R}^n: |f(x)|>t\}\right).
\end{equation*}
Since $f^*$ is decreasing, the maximal function $f^{**}$ of $f^*$ defined by
\begin{equation*}
f^{**}(t)=\frac{1}{t}\int_0^t f^*(s)ds, 
\end{equation*}
is also decreasing and $f^*(t)\leq f^{**}(t)$ for every $t\geq 0$.

For $1<p<\infty$ and $1\leq r\leq \infty$, the Lorentz space $L^{p,r}_\mu(\mathbb{R}^n)$ is defined as the set of $\mu$-measurable functions $f$ on $\mathbb{R}^n$ such that $\|f\|_{L^{p,r}_\mu(\mathbb{R}^n)}<\infty$, where
\begin{equation*}
\|f\|_{L^{p,r}_\mu(\mathbb{R}^n)}=
\begin{cases}
\left(\int\limits_0^\infty (t^\frac{1}{p}f^*(t))^r\frac{dt}{t}\right)^\frac{1}{r},&\text{ if } 1\leq r<\infty,\\
\sup\limits_{t>0}t^\frac{1}{p}f^*(t),&\text{ if } r=\infty.
\end{cases}
\end{equation*}
It is well-known that the function $\|\cdot\|_{L^{p,r}_\mu(\mathbb{R}^n)}$ defined above is a norm when $r\leq p$ and a quasi-norm otherwise. In order to obtain a norm in all cases, we define
\begin{equation*}
    \|f\|^*_{L^{p,r}_\mu(\mathbb{R}^n)}=
\begin{cases}
\left(\int\limits_0^\infty (t^\frac{1}{p}f^{**}(t))^r\frac{dt}{t}\right)^\frac{1}{r},&\text{ if } 1\leq r<\infty,\\
\sup\limits_{t>0}t^\frac{1}{p}f^{**}(t),&\text{ if } r=\infty.
\end{cases}
\end{equation*}
One can prove that   $\|\cdot\|^*_{L^{p,r}_\mu(\mathbb{R}^n)}$ is a norm, which is equivalent to $\|\cdot\|_{L^{p,r}_\mu(\mathbb{R}^n)}$ in the sense that
\begin{equation*}
\|f\|_{L^{p,r}_\mu(\mathbb{R}^n)}\leq\|f\|^*_{L^{p,r}_\mu(\mathbb{R}^n)}\leq \frac{p}{p-1}\|f\|_{L^{p,r}_\mu(\mathbb{R}^n)}, \;\forall f\in L^{p,r}_\mu(\mathbb{R}^n).
\end{equation*}
It is clear that $L^{p,p}_\mu(\mathbb{R}^n)=L^p_\mu(\mathbb{R}^n)$ and the Lorentz spaces are monotone with respect to second exponent, namely
\begin{equation}\label{embedding}
    L^{p,r_1}_\mu(\mathbb{R}^n)\subseteq L^{p,r_2}_\mu(\mathbb{R}^n),\;1\leq r_1\leq r_2\leq \infty.
\end{equation}
In this paper, we denote by $\|\cdot\|_{L^{p,r}_\kappa(\mathbb{R}^n)}$ the Lorentz space $\|\cdot\|_{L^{p,r}_{h_\kappa}(\mathbb{R}^n)}$ in short and when $\kappa\equiv 0$, we denote by $\|\cdot\|_{L^{p,r}(\mathbb{R}^n)}$.

Finally, we recall the Lorentz sequence space $\ell^{p,r}$. Let $\{\lambda_j\}^\infty_{j=1}\in c_0$ and $\{\lambda^*_j\}^\infty_{j=1}$ be the sequence permuted in a decreasing order. For $1<p<\infty$ and $1\leq r\leq \infty$, the Lorentz sequence space $\ell^{p,r}$ is defined as the set of all sequences $\{\lambda_j\}^\infty_{j=1}\in c_0$ such that $\|\{\lambda_j\}^\infty_{j=1}\|_{\ell^{p,r}}<\infty$, where
\begin{equation*}
\|\{\lambda_j\}^\infty_{j=1}\|_{\ell^{p,r}}=
\begin{cases}
\left(\sum\limits_{j=1}^\infty (j^\frac{1}{p}\lambda_j^*)^r\frac{1}{j}\right)^\frac{1}{r},&\text{ if } 1\leq r<\infty,\\
\sup\limits_{j\geq1}j^\frac{1}{p}\lambda_j^*,&\text{ if } r=\infty.
\end{cases}
\end{equation*}

\subsection{Schatten class} 		
Let \(\mathcal{H}\) be a complex separable Hilbert space. A linear compact operator \(A : \mathcal{H} \rightarrow \mathcal{H}\) belongs to the $\alpha$-Schatten-von Neumann class \(\mathfrak{S}^\alpha(\mathcal{H})\) if
	$$
	\sum_{j=1}^{\infty}\left(s_j(A)\right)^\alpha<\infty,
	$$where $\{s_j(A)\}_{j=1}^\infty$ denote the singular values of \(A,\), i.e., the eigenvalues of \(|A|=\sqrt{A^{*} A}\)
	with multiplicities counted. For $1 \leq \alpha<\infty,$ the Schatten space $\mathfrak{S}^\alpha(\mathcal{H})$ is defined as the space of all compact operators $A$ on $\mathcal{H}$ such that $\displaystyle \sum_{j=1}^\infty  \left(s_j(A)\right)^\alpha<\infty$, and the class \(\mathfrak{S}^\alpha(\mathcal{H})\) is a Banach space
	endowed with the norm
$$\|A\|_{\mathfrak{S}^\alpha(\mathcal{H})}=\left(\sum_{j=1}^{\infty}\left(s_j(A)\right)^\alpha\right)^{\frac{1}{\alpha}}.
	$$
	For  \(0<\alpha<1\), the $\|\cdot\|_{\mathfrak{S}^\alpha(\mathcal{H})}$ as above only defines a quasi-norm with respect to which
	\(\mathfrak{S}^\alpha(\mathcal{H})\) is complete. For $\alpha=\infty$, \(\mathfrak{S}^\infty(\mathcal{H})\) means the
space of the compact operators rather than the bounded operators. But the norm $\|\cdot\|_{\mathfrak{S}^\infty(\mathcal{H})}$ is the
same as for the bounded operators. An operator belongs to the class \(\mathfrak{S}^{1}(\mathcal{H})\) is known as {\it Trace class} operator. Also, an operator belongs to   \(\mathfrak{S}^{2}(\mathcal{H})\) is known as  {\it Hilbert-Schmidt} operator.

 We shall make use of the following duality principle to derive orthonormal Strichartz inequalities from estimates in terms of Schatten bounds. For the proof, we refer to \cite[Lemma 3]{FS} with appropriate modifications.
 \begin{lemma}[Duality principle]\label{duality-principle} Assume that $A$ is a bounded linear operator from $L_\kappa^2(\mathbb{R}^n)$ to $L^{2q,2r}(\mathbb{R}, L_\kappa^{2p}(\mathbb{R}^n))$ for some $p,q,\alpha\geq 1$. Then 
         \begin{equation*}
			\bigg\|\sum_{j=1}^\infty \lambda_j|A f_j|^2\bigg\|_{L^{q,r}(\mathbb{R}, L_\kappa^{p}(\mathbb{R}^n))}\lesssim \|\{\lambda_j\}^\infty_{j=1}\|_{\ell^{\alpha}} ,
		\end{equation*}	
  holds for all families of orthonormal functions $\{f_j\}_{j=1}^\infty$ in $L_\kappa^2(\mathbb{R}^n)$ and all sequences $\{\lambda_j\}^\infty_{j=1}$ in $\ell^{\alpha}$ if and only if
		\begin{equation*}
	\big\|WAA^*\overline{W}\big\|_{\mathfrak{S}^{\alpha^\prime}(L^2(\mathbb{R}, L_\kappa^2(\mathbb{R}^n)))}\lesssim \big\|W\big\|^2_{L^{2q^\prime,2r^\prime}(\mathbb{R}, L_\kappa^{2p^\prime}(\mathbb{R}^n))},
		\end{equation*}
  for all $W\in L^{2q^\prime,2r^\prime}(\mathbb{R}, L_\kappa^{2p^\prime}(\mathbb{R}^n))$, where the function $W$ is interpreted as an operator which acts by multiplication.
	\end{lemma}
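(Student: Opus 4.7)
The plan is to adapt the Frank-Sabin duality argument to the Lorentz-space and Dunkl setting; no Dunkl-specific structure beyond the Hilbert-space nature of $L^2_\kappa(\mathbb{R}^n)$ will actually be used. First I would pass from the orthonormal language to density operators: set $\gamma:=\sum_j\lambda_j|f_j\rangle\langle f_j|$, which is a positive compact operator on $L^2_\kappa(\mathbb{R}^n)$. Orthonormality of the $f_j$ makes the $\lambda_j$ the full list of eigenvalues of $\gamma$, so $\|\gamma\|_{\mathfrak{S}^\alpha}=\|\{\lambda_j\}_{j=1}^\infty\|_{\ell^\alpha}$; conversely every positive $\gamma\in\mathfrak{S}^\alpha$ arises this way by spectral decomposition. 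The stated orthonormal inequality is therefore equivalent to
\begin{equation*}
\bigl\|\rho_{A\gamma A^\ast}\bigr\|_{L^{q,r}(\mathbb{R},L^p_\kappa(\mathbb{R}^n))}\lesssim \|\gamma\|_{\mathfrak{S}^\alpha},\qquad \forall\,\gamma\ge 0,
\end{equation*}
where the \emph{density} is $\rho_{A\gamma A^\ast}(t,x):=\sum_j\lambda_j|Af_j(t,x)|^2\ge 0$.

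The second step is duality of (mixed) Lorentz spaces. In the admissible range one has $(L^{q,r}(L^p_\kappa))^{\ast}=L^{q',r'}(L^{p'}_\kappa)$ (using the equivalent maximal norm $\|\cdot\|^{\ast}_{L^{p,r}_\kappa}$ at the endpoints $r\in\{1,\infty\}$), so positivity of $\rho_{A\gamma A^\ast}$ gives
\begin{equation*}
\|\rho_{A\gamma A^\ast}\|_{L^{q,r}(L^p_\kappa)} = \sup_{h\ge 0,\ \|h\|_{L^{q',r'}(L^{p'}_\kappa)}\le 1}\int_{\mathbb{R}}\!\!\int_{\mathbb{R}^n}\rho_{A\gamma A^\ast}(t,x)\,h(t,x)\,h_\kappa(x)\,dx\,dt.
\end{equation*}
Each such $h$ can be written as $h=|W|^2$, and the rearrangement identity $(|W|^2)^\ast(s)=(W^\ast(s))^2$ together with the definition of the Lorentz (quasi-)norm yields the crucial scaling $\||W|^2\|_{L^{q',r'}(L^{p'}_\kappa)}=\|W\|^2_{L^{2q',2r'}(L^{2p'}_\kappa)}$. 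Hence the supremum runs equivalently over $W$ in the unit ball of $L^{2q',2r'}(L^{2p'}_\kappa)$, and expanding the integrand gives the key trace identity
\begin{equation*}
\int\rho_{A\gamma A^\ast}\,|W|^2 = \sum_j\lambda_j\|WAf_j\|_{L^2(\mathbb{R},L^2_\kappa)}^2 = \mathrm{tr}\bigl(WA\gamma A^\ast\overline{W}\bigr),
\end{equation*}
in which $W$ is interpreted as the pointwise multiplication operator on $L^2(\mathbb{R},L^2_\kappa(\mathbb{R}^n))$.

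The third step invokes Schatten duality. Since $A^\ast\overline{W}WA\ge 0$ and the dual pairing $\mathrm{tr}(\gamma\cdot)$ between $\mathfrak{S}^\alpha$ and $\mathfrak{S}^{\alpha'}$ attains its norm on the positive cone, we get
\begin{equation*}
\sup_{\gamma\ge 0,\ \|\gamma\|_{\mathfrak{S}^\alpha}\le 1}\mathrm{tr}\bigl(WA\gamma A^\ast\overline W\bigr) = \sup_{\gamma\ge 0,\ \|\gamma\|_{\mathfrak{S}^\alpha}\le 1}\mathrm{tr}\bigl(\gamma\,A^\ast\overline W W A\bigr) = \|A^\ast\overline W W A\|_{\mathfrak{S}^{\alpha'}},
\end{equation*}
and the isometry $\|B^\ast B\|_{\mathfrak{S}^{\alpha'}}=\|BB^\ast\|_{\mathfrak{S}^{\alpha'}}$ applied with $B=WA$ rewrites this as $\|WAA^\ast\overline{W}\|_{\mathfrak{S}^{\alpha'}}$. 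Taking the joint supremum over the two unit balls delivers the claimed two-sided equivalence, proving both implications of the \emph{if and only if} simultaneously.

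The main technical obstacle is the careful handling of the (mixed) Lorentz duality over the weighted space $(\mathbb{R}, L^{p'}_\kappa(\mathbb{R}^n))$, particularly at the endpoints $r\in\{1,\infty\}$, where one must pass between $\|\cdot\|_{L^{p,r}_\kappa}$ and its equivalent maximal version $\|\cdot\|^{\ast}_{L^{p,r}_\kappa}$ to secure a genuine Banach-space duality. A secondary but nontrivial point is verifying that restricting the Schatten duality to the positive cone still captures the full $\mathfrak{S}^{\alpha'}$-norm of $A^\ast\overline{W}WA$; this is standard but should be confirmed by a finite-rank approximation argument, especially when $\alpha=1$ or $\alpha=\infty$. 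Once these points are in place, the remainder of the proof is a direct transcription of the Frank-Sabin scheme, since neither the Schatten duality nor the trace identity $\rho_{A\gamma A^\ast}\mapsto\mathrm{tr}(WA\gamma A^\ast\overline W)$ is sensitive to the Dunkl deformation of the measure.
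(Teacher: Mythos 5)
Your proposal is correct and is essentially the same argument the paper relies on: the paper gives no proof but cites Frank--Sabin's Lemma 3 ``with appropriate modifications,'' and your reconstruction (density operators, mixed Lorentz duality with the square-root substitution $h=|W|^2$ giving $\||W|^2\|_{L^{q',r'}(L^{p'}_\kappa)}=\|W\|^2_{L^{2q',2r'}(L^{2p'}_\kappa)}$, the trace identity, and Schatten duality via $\|B^*B\|_{\mathfrak{S}^{\alpha'}}=\|BB^*\|_{\mathfrak{S}^{\alpha'}}$) is exactly that scheme transposed to the Dunkl/Lorentz setting. The only point worth making explicit is the reduction from general (signed or complex) sequences $\{\lambda_j\}$ to the positive cone $\gamma\ge 0$ by splitting $\lambda_j$ into positive and negative (and real and imaginary) parts at the cost of a harmless constant, since your duality steps for $\rho_{A\gamma A^*}$ use nonnegativity of the density.
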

We also need the complex interpolation method in Schatten spaces. For the proof, we refer to \cite[Propostion 1]{FS} with appropriate modifications.
\begin{lemma}\label{interpolation schatten}
		Let $\{T_z\}$ be an analytic family of operators on $\mathbb{R}^d\times \mathbb{R}$ in the sense of Stein defined in the strip $a_0\leq Re (z) \leq a_1$ for some $a_0<a_1$. If there exist $M_0, M_1, b_0, b_1>0$, $1\leq p_0,p_1,q_0,q_1\leq \infty$ and $1\leq \alpha_0,\alpha_1\leq\infty$ such that for all $s\in\mathbb{R}$, one has for all simple functions $W_1,W_2$ on $\mathbb{R}^d\times \mathbb{R}$
		\begin{small}\begin{equation*}		
				\begin{aligned}
					&
 \left\|W_1T_{a_0+is}W_2\right\|_{\mathfrak{S}^{\alpha_0}\left(L^2(\mathbb{R}, L_\kappa^2(\mathbb{R}^d))\right)}\leq M_0 e^{b_0|s|}\left\|W_1\right\|_{L^{q_0}(\mathcal{\mathbb{R}}, L_\kappa^{p_0}(\mathbb{R}^d))}\left\|W_2\right\|_{L^{q_0}(\mathbb{R}, L_\kappa^{p_0}(\mathbb{R}^d))},\\			
					&	\left\|W_1T_{a_1+is}W_2\right\|_{\mathfrak{S}^{\alpha_1}\left(L^2(\mathbb{R}, L_\kappa^2(\mathbb{R}^d))\right)}\leq M_1e^{b_1|s|}\left\|W_1\right\|_{L^{q_1}(\mathbb{R}, L_\kappa^{p_1}(\mathbb{R}^d))}\left\|W_2\right\|_{L^{q_1}(\mathbb{R}, L_\kappa^{p_1}(\mathbb{R}^d))}.
				\end{aligned}
			\end{equation*}
		\end{small}
		Then for all $\theta\in [0,1]$, $a_\theta, \alpha_\theta, p_\theta$ and $q_\theta$ such that
			\begin{equation*}
				a_\theta=(1-\theta)a_0+a_1,\; \frac{1}{\alpha_\theta}=\frac{1-\theta}{\alpha_0}+\frac{\theta}{\alpha_1},\;\frac{1}{p_\theta}=\frac{1-\theta}{p_0}+\frac{\theta}{p_1}\;\text{and}\;\frac{1}{q_\theta}=\frac{1-\theta}{q_0}+\frac{\theta}{q_1},
			\end{equation*}
            we have
			\begin{equation*}
\left\|W_1T_{a_\theta}W_2\right\|_{\mathfrak{S}^{\alpha_\theta}\left(L^2(\mathbb{R}, L_\kappa^2(\mathbb{R}^d))\right)}\leq  M_0^{1-\theta}M_1^\theta\left\|W_1\right\|_{L^{q_\theta}(\mathbb{R}, L_\kappa^{p_\theta}(\mathbb{R}^d))}\left\|W_2\right\|_{L^{q_\theta}(\mathbb{R}, L_\kappa^{p_\theta}(\mathbb{R}^d))}.
			\end{equation*}
	\end{lemma}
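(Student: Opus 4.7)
The plan is to follow Stein's complex interpolation method adapted to Schatten classes, as in Frank--Sabin \cite[Proposition 1]{FS}, with the modifications needed to accommodate the mixed-norm spaces $L^{q}(\mathbb{R}, L_\kappa^{p}(\mathbb{R}^d))$ rather than a single $L^p(\mathbb{R}^d)$. By the Schatten duality
\[
\|S\|_{\mathfrak{S}^{\alpha_\theta}} = \sup\bigl\{|\mathrm{tr}(SK)| : K \text{ finite rank},\ \|K\|_{\mathfrak{S}^{\alpha_\theta'}} \leq 1\bigr\},
\]
it suffices to show that for all simple $W_1, W_2$ with $\|W_j\|_{L^{q_\theta}(L_\kappa^{p_\theta})} = 1$ and every finite-rank $K$ with $\|K\|_{\mathfrak{S}^{\alpha_\theta'}} = 1$, we have $|\mathrm{tr}(W_1 T_{a_\theta} W_2 K)| \leq M_0^{1-\theta} M_1^{\theta}$.

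Next I would build three analytic families indexed by $z$ in the strip $a_0 \leq \mathrm{Re}\, z \leq a_1$. For each $W_j = \phi_j\, e^{i\psi_j}$ with $\phi_j \geq 0$ simple and $\psi_j$ real simple, set $W_j(z) := \phi_j^{\beta_j(z)} e^{i\psi_j}$, where $\beta_j$ is an affine function of $z$ normalized so that $\beta_j(a_\theta) = 1$ and $\|W_j(a_i + is)\|_{L^{q_i}(L_\kappa^{p_i})} = 1$ for $i = 0, 1$. Because the target norm is nested, two real parameters are needed: following the Bergh--L\"ofstr\"om device, I would decompose $W_j$ over the level sets of both its spatial and temporal moduli and choose a single affine exponent that simultaneously rescales the inner $L_\kappa^{p_\theta}$ norm on each time-fiber to the inner $L_\kappa^{p_i}$ norm and the outer $L^{q_\theta}$ norm to the outer $L^{q_i}$ norm. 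For the test operator, the polar decomposition $K = U|K|$ permits setting $K(z) := U|K|^{\gamma(z)}$ with $\gamma$ affine, $\gamma(a_\theta) = 1$, and $\|K(a_i + is)\|_{\mathfrak{S}^{\alpha_i'}} = 1$.

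With these families in hand, form
\[
F(z) := \mathrm{tr}\bigl( W_1(z)\, T_z\, W_2(z)\, K(z)\bigr).
\]
The analyticity of $\{T_z\}$ in Stein's sense, combined with the finite-rank structure of $K(z)$ and the simplicity of $W_j(z)$, makes $F$ bounded and holomorphic on the strip. Applying the hypothesized bound on each boundary line, together with the tracial H\"older inequality in Schatten classes to pair $W_1(z) T_z W_2(z)$ against $K(z)$, yields
\[
|F(a_i + is)| \leq M_i\, e^{b_i |s|}, \qquad i = 0, 1.
\]
A standard regularization by the factor $e^{\varepsilon z^2}$ absorbs the exponential growth on the boundaries, and the Hadamard three-lines lemma then gives $|F(a_\theta)| \leq M_0^{1-\theta} M_1^{\theta}$ upon letting $\varepsilon \to 0$. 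Taking the supremum over admissible $K$ recovers the asserted $\mathfrak{S}^{\alpha_\theta}$-bound at $z = a_\theta$. The main obstacle, and the only point where the argument departs substantively from \cite[Proposition 1]{FS}, is the construction of $W_j(z)$: the coupling of the spatial $L_\kappa^p$ and temporal $L^q$ exponents requires choosing the affine exponent $\beta_j(z)$ so that both nested norms are simultaneously normalized on the two boundary lines, which forces the level-set decomposition on both scales and a careful bookkeeping of the two exponent parameters.
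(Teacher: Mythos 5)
Your proposal is correct and follows essentially the same route as the paper, which itself gives no independent proof but defers to \cite[Proposition 1]{FS}: duality against a finite-rank $K$, analytic families $W_j(z)$, $K(z)$ normalized on the boundary lines, and the three-lines lemma applied to $F(z)=\mathrm{tr}(W_1(z)T_zW_2(z)K(z))$ after an $e^{\varepsilon z^2}$ regularization. The only imprecision is in your description of $W_j(z)$ for the mixed norm: rather than "a single affine exponent," the standard device is a product of two factors, $\bigl(|W_j(x,t)|/\|W_j(\cdot,t)\|_{L_\kappa^{p_\theta}}\bigr)^{p_\theta/p(z)}\,\|W_j(\cdot,t)\|_{L_\kappa^{p_\theta}}^{\,q_\theta/q(z)}$ times the phase, each exponent being affine in $z$ because $1/p(z)$ and $1/q(z)$ are; this normalizes the inner and outer norms simultaneously and is the "appropriate modification" the paper alludes to.
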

 
\subsection{Littlewood-Paley projections and an interpolation method}\label{L-P section}
In this subsection, we recall the Littlewood-Paley decomposition related to the Dunkl Laplacian $\Delta_\kappa$ and introduce an interpolation method which extends frequency localized estimates to general data.

We choose $\phi\in C_c^\infty(\mathbb{R})$ to be supported in $[1/2,2]$ such that 
	\begin{equation*}
\underset{j\in \mathbb{Z}}{\sum}\phi(2^{-j}t)=1, \text{ for all } t>0.
	\end{equation*}
For $f\in \mathcal{S}'\left(\mathbb{R}^{n}\right)$, we define the Littlewood-Paley decomposition related to the Dunkl Laplacian by
\begin{equation*}
    P_kf=\phi(2^{-j}\sqrt{-\Delta_\kappa})f=\mathcal{F}_\kappa^{-1}\left(\phi(2^{-j}|\cdot|)\mathcal{F}_\kappa f\right), \text{ for all }  k\in\mathbb{Z}.
\end{equation*}
 Now proceeding similarly as in \cite{Bez-Hong-Lee-Nakamura-Sawano}, we have the following proposition to upgrade localized estimates to annuli to global estimates in restricted weak-type form.
\begin{proposition}\label{upgrading}
    Let $q_0,q_1>1$, $p,\alpha_0,\alpha_1\geq1$ and $\{g_j\}_{j=1}^\infty$ be a uniformly bounded sequence in $L^{2q_i}(\mathbb{R},L_\kappa^{2p}(\mathbb{R}^n))$ for each $i=0,1$. If for each $i=0,1,$ there exists $\epsilon_i>0$ such that
    \begin{equation*}
        \left\|\sum_{j=1}^\infty \lambda_j|P_kg_j|^2\right\|_{L^{q_i,\infty}(\mathbb{R},L_\kappa^{p}(\mathbb{R}^n))}\lesssim 2^{(-1)^{i+1}\epsilon_i k}\|\{\lambda_j\}^\infty_{j=1}\|_{\ell^{\alpha_i}}
    \end{equation*}
    for all $k\in\mathbb{Z}$, then we have
    \begin{equation*}
        \left\|\sum_{j=1}^\infty \lambda_j|g_j|^2\right\|_{L^{q,\infty}(\mathbb{R},L_\kappa^{p}(\mathbb{R}^n))}\lesssim \|\{\lambda_j\}^\infty_{j=1}\|_{\ell^{\alpha,1}}
    \end{equation*}
    for all sequences $\{\lambda_j\}^\infty_{j=1}\in \ell^{\alpha,1}$, where
    \begin{equation*}
        \frac{1}{q}=\frac{\theta}{q_0}+\frac{1-\theta}{q_1},\;\frac{1}{\alpha}=\frac{\theta}{\alpha_0}+\frac{1-\theta}{\alpha_1} \text{ and } \theta=\frac{\epsilon_1}{\epsilon_1+\epsilon_2}.
    \end{equation*}
\end{proposition}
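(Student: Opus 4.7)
The plan is to follow the Littlewood--Paley/real interpolation strategy of Bez--Hong--Lee--Nakamura--Sawano \cite{Bez-Hong-Lee-Nakamura-Sawano}, adapted to the Dunkl setting. After reducing by sublinearity to the case $\lambda_j\ge 0$ and by density to finitely supported sequences and nice $g_j$, the goal splits into two roughly independent pieces: (i) comparing $\sum_j \lambda_j |g_j|^2$ with its Littlewood--Paley pieces $\sum_j \lambda_j |P_k g_j|^2$ summed over scales $k$, and (ii) combining the hypothesized $k$-dependent weak-type estimates into a single weak-type estimate whose RHS is an $\ell^{\alpha,1}$ norm.

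For (i), I would write $g_j=\sum_k P_k g_j$ and expand $|g_j|^2$. Using the approximate disjointness of the frequency supports of the Dunkl multipliers $P_k$ (so that $P_kP_l=0$ when $|k-l|\ge 2$) together with a Cauchy--Schwarz on the diagonal, together with the vector-valued Littlewood--Paley square function associated with $\Delta_\kappa$, one can control $\sum_j \lambda_j |g_j|^2$ by $\sum_k \sum_j \lambda_j |P_k g_j|^2$ in the $L^{q,\infty}(\mathbb{R},L^p_\kappa)$ quasi-norm modulo a controlled loss. The uniform $L^{2q_i}L^{2p}_\kappa$-boundedness of $\{g_j\}$ is used here to absorb low-frequency/high-frequency tails that would otherwise be uncontrolled.

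For (ii), which is the heart of the argument, I would treat for each fixed $k$ the sublinear map
\[
S_k : \lambda \longmapsto \sum_{j=1}^{\infty}\lambda_j |P_k g_j|^2,
\]
which by hypothesis satisfies $\|S_k\lambda\|_{L^{q_0,\infty}L^p_\kappa}\lesssim 2^{-\epsilon_0 k}\|\lambda\|_{\ell^{\alpha_0}}$ and $\|S_k\lambda\|_{L^{q_1,\infty}L^p_\kappa}\lesssim 2^{\epsilon_1 k}\|\lambda\|_{\ell^{\alpha_1}}$. Then I would invoke the $K$-method characterization $\ell^{\alpha,1}=(\ell^{\alpha_0},\ell^{\alpha_1})_{\theta,1}$ with $\theta=\epsilon_1/(\epsilon_0+\epsilon_1)$: for each integer $m$, decompose $\lambda=\lambda_0^{(m)}+\lambda_1^{(m)}$ so that $\|\lambda_0^{(m)}\|_{\ell^{\alpha_0}}+2^m\|\lambda_1^{(m)}\|_{\ell^{\alpha_1}}\lesssim K(2^m,\lambda)$, and choose the Littlewood--Paley scale $k_m$ so that $2^{-\epsilon_0 k_m}\sim 2^m\cdot 2^{\epsilon_1 k_m}$, i.e.\ $k_m\sim -m/(\epsilon_0+\epsilon_1)$. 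For each scale $k$ one applies the $\alpha_0$-bound to $\lambda_0^{(m)}$ and the $\alpha_1$-bound to $\lambda_1^{(m)}$, the dyadic choice $k=k_m$ balancing the two. Summing in $m$ and reassembling produces precisely the $\ell^{\alpha,1}$ norm via $\|\lambda\|_{\ell^{\alpha,1}}\sim \sum_m 2^{-\theta m} K(2^m,\lambda)$; the weak-$L^{q,\infty}$ norm on the output side is compatible because both endpoint spaces $L^{q_i,\infty}L^p_\kappa$ are of Lorentz type with second index $\infty$, which is exactly the Marcinkiewicz setup producing a Lorentz-sequence input.

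The main technical obstacle is the interplay in step (i) between the squared modulus and the Littlewood--Paley decomposition: since $|g_j|^2=|\sum_k P_k g_j|^2$ has off-diagonal cross terms, the reduction to a clean sum of frequency-localized pieces requires a careful Dunkl-adapted square function argument, exploiting the almost-orthogonality of the $P_k$ together with boundedness of the Littlewood--Paley square function on $L^p_\kappa$ (mixed with the time-integrability $L^{q,\infty}$). A secondary difficulty is that $L^{q,\infty}$ is only a quasi-Banach space, so triangle inequalities in step (ii) produce uncontrolled losses unless one either works at the level of the bilinear pairing with $W\in L^{q',1}L^{p'}_\kappa$ (and uses Hölder in Lorentz spaces), or passes through the Schatten dual formulation of Lemma \ref{duality-principle}; I would opt for the bilinear-pairing route, which neatly accommodates both the $\ell^{\alpha,1}$ atomic decomposition of $\lambda$ and the summation over dyadic scales.
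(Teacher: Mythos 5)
The paper itself gives no proof of Proposition \ref{upgrading}; it simply invokes the argument of Bez--Hong--Lee--Nakamura--Sawano, so I am comparing your sketch against that argument. Your step (ii) -- the Bourgain-type summation over dyadic scales, balancing the $2^{-\epsilon_0 k}$ and $2^{\epsilon_1 k}$ gains against a near-optimal $K$-functional decomposition of $\lambda$, and recovering $\ell^{\alpha,1}\to L^{q,\infty}$ -- is the right engine and matches the intended proof. Your step (i), however, is where the gap lies. You propose to dominate $\sum_j\lambda_j|g_j|^2$ by $\sum_k\sum_j\lambda_j|P_kg_j|^2$ via almost-orthogonality of the $P_k$ plus a (Dunkl) Littlewood--Paley square function theorem. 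This is exactly the step that the introduction of the paper (quoting \cite{Bez-Hong-Lee-Nakamura-Sawano}) warns cannot be carried out: a square function theorem gives an equivalence of norms of $g_j$ and $Sg_j$ one function at a time, but what you need is a \emph{weighted, summed-over-$j$} inequality $\|\sum_j\lambda_j|g_j|^2\|_{L^{q,\infty}_tL^p_\kappa}\lesssim\|\sum_j\lambda_j\sum_k|P_kg_j|^2\|_{L^{q,\infty}_tL^p_\kappa}$ in a mixed Lorentz--Lebesgue norm, which does not follow from the scalar Littlewood--Paley theorem and is not established (and would be delicate) in the Dunkl setting. Moreover, even granting it, you would then be summing the $F_k=\sum_j\lambda_j|P_kg_j|^2$ over $k$, whereas your step (ii) as written applies the interpolation to the operators $S_k:\lambda\mapsto F_k$ and to $\sum_kS_k$, which is \emph{not} the quantity $\sum_j\lambda_j|g_j|^2$ you need to control.

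The correct and entirely elementary substitute is: after reducing to $\lambda_j\ge0$, apply Minkowski's inequality in $\ell^2_j$ to $\{\lambda_j^{1/2}g_j\}_j=\sum_k\{\lambda_j^{1/2}P_kg_j\}_j$ to get the pointwise bound
\begin{equation*}
\Bigl(\sum_{j}\lambda_j|g_j|^2\Bigr)^{1/2}\le\sum_{k\in\mathbb{Z}}\Bigl(\sum_{j}\lambda_j|P_kg_j|^2\Bigr)^{1/2},
\end{equation*}
so that the summation over $k$ is performed on the square roots $F_k^{1/2}$ in $L^{2q,\infty}(\mathbb{R},L^{2p}_\kappa(\mathbb{R}^n))$, where the hypotheses read $\|F_k^{1/2}\|_{L^{2q_i,\infty}L^{2p}_\kappa}\lesssim 2^{(-1)^{i+1}\epsilon_ik/2}\|\lambda\|_{\ell^{\alpha_i}}^{1/2}$; no Littlewood--Paley theorem is needed, and the uniform boundedness of $\{g_j\}$ only serves to justify $g_j=\sum_kP_kg_j$ and the finiteness of the quantities involved. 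Your step (ii) then goes through verbatim on these square roots. Finally, your worry about the quasi-norm is moot: since $q_0,q_1>1$, both $L^{q,\infty}L^p_\kappa$ and $L^{2q,\infty}L^{2p}_\kappa$ admit equivalent genuine norms (via $f^{**}$), so the infinite sum over $k$ poses no problem and the detour through the bilinear pairing or the Schatten dual formulation is unnecessary.
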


\section{Technical Lemmas}\label{sec3}
In this section, we introduce some important lemmas to prove the main result of this paper.  Since the Dunkl transform of radial functions is explicitly expressed by the integral related to Bessel functions, we first list some properties of Bessel functions.\\

Let $J_\nu$ be the Bessel function of order $\nu>-\frac{1}{2}$, defined as
\begin{equation*}
J_\nu(r)=\frac{(\frac{r}{2})^\nu}{\Gamma(\nu+\frac{1}{2})\pi^{\frac{1}{2}}}\int_{-1}^1e^{ir\tau}(1-\tau^2)^{\nu-\frac{1}{2}}d\tau.
\end{equation*}

\begin{lemma}[see \cite{Grafakos2008}]\label{Bessel}
For $r>0$ and $\nu>-\frac{1}{2}$, we have
\begin{align}
&(1)J_\nu(r)\leq C_\nu r^\nu,0<r<1;\label{bessel1}\\
&(2)\frac{d}{dr}\left(r^{-\nu}J_\nu(r)\right)=-r^{-\nu}J_{\nu+1}(r);\label{bessel2}\\
&(3)J_\nu(r)\leq C_\nu r^{-\frac{1}{2}}, r\geq1;\label{bessel3}\\
&(4)J_\nu(r)=\frac{(\frac{r}{2})^\nu}{\Gamma(\nu+\frac{1}{2})\Gamma(\frac{1}{2})} \left[ie^{-ir}\int_0^\infty e^{-rt}(t^2+2it)^{\nu-\frac{1}{2}}dt-ie^{ir}\int_0^\infty e^{-rt}(t^2-2it)^{\nu-\frac{1}{2}}dt\right].\label{bessel4}
\end{align}
\end{lemma}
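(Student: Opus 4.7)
My plan is to start from the integral representation of $J_\nu$ given in the definition just before the lemma and derive each of the four assertions by an essentially elementary calculation. This is the approach in Grafakos' textbook, which the lemma cites.

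For (1), I would simply take absolute values inside the integral. Since $\nu>-\tfrac{1}{2}$, the weight $(1-\tau^2)^{\nu-\frac{1}{2}}$ is integrable on $[-1,1]$ and $|e^{ir\tau}|=1$, so the integral is bounded by a constant depending only on $\nu$. The prefactor $(r/2)^\nu$ then produces the desired $r^\nu$ bound. (In fact the bound is global in $r$, but only genuinely useful for small $r$.)

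For (2), I would rewrite $r^{-\nu}J_\nu(r)$ via the integral representation to absorb the prefactor, so only $e^{ir\tau}$ depends on $r$. Differentiating under the integral brings down an extra factor $i\tau$, which I would remove by integration by parts, using $\tau(1-\tau^2)^{\nu-\frac{1}{2}}=-\tfrac{1}{2\nu+1}\tfrac{d}{d\tau}(1-\tau^2)^{\nu+\frac{1}{2}}$; since $\nu+\tfrac{1}{2}>0$ the boundary terms vanish. Repackaging constants via $\Gamma(\nu+\tfrac{3}{2})=(\nu+\tfrac{1}{2})\Gamma(\nu+\tfrac{1}{2})$ identifies the result as $-r^{-\nu}J_{\nu+1}(r)$.

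For (4), the approach is contour deformation. I would split the $\tau$-integral at $0$, and on each half substitute $\tau=\pm 1+it/r$ to deform the original segment $[-1,1]$ into two rays issuing from $\pm 1$ into the upper half-plane. Holomorphy of $(1-\tau^2)^{\nu-\frac{1}{2}}$ away from its branch cuts, together with the decay $|e^{ir\tau}|=e^{-t}$ on the new contours, justifies the deformation and kills any arc-at-infinity contribution. After simplifying $(1-\tau^2)^{\nu-\frac{1}{2}}$ under the substitutions and pulling out the oscillatory boundary factors $e^{\pm ir}$, one obtains the two integrals displayed in (4). The \emph{main subtlety}, and the only non-routine step, is choosing the branches of $(1-\tau^2)^{\nu-\frac{1}{2}}$ consistently along the deformed contours so that the $i$'s and signs match the stated identity.

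Finally, (3) will drop out of (4) with no further work: for $r\geq 1$ I would bound $|(t^2\pm 2it)^{\nu-\frac{1}{2}}|\lesssim t^{\nu-\frac{1}{2}}+t^{2\nu-1}$, then rescale $t\mapsto t/r$ inside the two integrals to extract a decay of order $r^{-\nu-\frac{1}{2}}$. Multiplying by the prefactor $(r/2)^\nu$ yields $|J_\nu(r)|\lesssim r^{-\frac{1}{2}}$, completing the lemma.
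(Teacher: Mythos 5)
The paper offers no proof of this lemma at all --- it is quoted verbatim from the cited reference (Grafakos, Appendix~B) --- so there is no argument to compare against; your sketch reproduces the standard textbook derivations and is essentially correct. Parts (1), (2) and (4) are fine as described: taking absolute values under the integral (using integrability of $(1-\tau^2)^{\nu-\frac12}$ for $\nu>-\frac12$); differentiating under the integral and integrating by parts via $\tau(1-\tau^2)^{\nu-\frac12}=-\tfrac{1}{2\nu+1}\tfrac{d}{d\tau}(1-\tau^2)^{\nu+\frac12}$ with vanishing boundary terms; and deforming $[-1,1]$ onto the two upward rays $\tau=\pm1+it$, $t\in(0,\infty)$, where $1-\tau^2=t^2\mp 2it$ and $d\tau=i\,dt$, which with the correct orientation yields exactly the displayed identity.

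The one step that would not go through as written is (3). The bound $|(t^2\pm 2it)^{\nu-\frac12}|\lesssim t^{\nu-\frac12}+t^{2\nu-1}$ followed by a global rescaling does not produce $r^{-\nu-\frac12}$: the integral $\int_0^\infty e^{-rt}t^{2\nu-1}\,dt$ diverges at $t=0$ when $-\frac12<\nu\le 0$, and equals $\Gamma(2\nu)r^{-2\nu}$ when $\nu>0$, which for $0<\nu<\frac12$ and $r\ge 1$ is \emph{larger} than $r^{-\nu-\frac12}$, so the prefactor $(r/2)^{\nu}$ would not bring you down to $r^{-1/2}$. The standard fix: since $|t^2\pm 2it|=t\sqrt{t^2+4}\ge 2t$, for $\nu<\frac12$ the single bound $(2t)^{\nu-\frac12}$ holds for all $t>0$ and gives $\int_0^\infty e^{-rt}t^{\nu-\frac12}\,dt=\Gamma(\nu+\tfrac12)\,r^{-\nu-\frac12}$ directly; for $\nu\ge\frac12$ use $t^{\nu-\frac12}$ on $t\le 1$ and $t^{2\nu-1}$ on $t\ge 1$, where $e^{-rt}\le e^{-r/2}e^{-t/2}$ makes the second piece decay faster than any power of $r\ge 1$. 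With that adjustment the estimate $|J_\nu(r)|\lesssim r^{\nu}\cdot r^{-\nu-\frac12}=r^{-\frac12}$ follows as you intend.
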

\begin{remark}
From \eqref{bessel1} and \eqref{bessel2} of Lemma \ref{Bessel}, we can easily obtain that for any $0\leq s\leq 1$ and $\beta\in\mathbb{N}$,
\begin{equation}\label{small-s}
  \left|\frac{d^\beta}{dr^\beta}\left(\psi^2(r)\frac{J_{\frac{N-2}{2}}(rs)}{(rs)^\frac{N-2}{2}}  r^{N-1}\right)\right|\leq C_{\beta},
\end{equation}
where $\psi$ is the function in the introdution.
Again from \eqref{bessel4}, we have the identity
\begin{equation}\label{Bessel-Fourier}
    \frac{J_{\frac{N-2}{2}}(r)}{r^\frac{N-2}{2}}=C\left(e^{ir}h(r)+e^{-ir}\overline{h(r)}\right),
\end{equation}
 where $$h(r)=-i\int_0^\infty e^{-rt}(t^2-2it)^\frac{N-3}{2}dt,$$
 and  for any $\beta\in\mathbb{N}$, one can get     
\begin{equation}\label{h}
\left|\frac{d^\beta}{dr^\beta}h(r)\right|\leq C_\beta(1+r)^{-\frac{N-1}{2}-\beta},
\end{equation}
for all $\beta\in\mathbb{N}$. Thus, from \eqref{h}, for any $s\geq1$ and $\beta\in\mathbb{N}$, we have
\begin{equation}\label{big-s}
\left|\frac{d^\beta}{dr^\beta}\left(\psi(r)^2h(rs) r^{N-1}\right)\right|\leq C_\beta s^{-\frac{N-1}{2}}.
\end{equation}
\end{remark}
We also exploit the following estimates, which can be easily proved by comparing the sums with the corresponding integrals.
\begin{lemma}\label{Sum}
Fix $\beta>0$. There exists $C_\beta>0$ such that for any $A>0$, we have
\begin{align*}
\sum_{j\in\mathbb{Z}, 2^j\leq A}2^{j\beta}&\leq C_\beta A^\beta,\\
\sum_{j\in\mathbb{Z}, 2^j>A}2^{-j\beta}&\leq C_\beta A^{-\beta}.
\end{align*}
\end{lemma}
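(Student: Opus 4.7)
The plan is to reduce both inequalities to geometric series in a routine way; the constants $C_\beta$ will come out proportional to $(1-2^{-\beta})^{-1}$, which is finite since $\beta>0$.

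First I would handle the upper inequality. Let $j_0$ be the largest integer in $\mathbb{Z}$ with $2^{j_0}\leq A$, i.e.\ $j_0=\lfloor \log_2 A\rfloor$. Then reindexing $j=j_0-k$ for $k\geq 0$ converts the sum into a standard geometric series:
\[
\sum_{j\in\mathbb{Z},\,2^j\leq A}2^{j\beta}=\sum_{k=0}^\infty 2^{(j_0-k)\beta}=\frac{2^{j_0\beta}}{1-2^{-\beta}}\leq \frac{A^\beta}{1-2^{-\beta}},
\]
where the last step uses $2^{j_0}\leq A$ and $\beta>0$. This gives the first bound with $C_\beta=(1-2^{-\beta})^{-1}$.

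For the second inequality I would argue symmetrically. Let $j_1$ be the smallest integer with $2^{j_1}>A$, so in particular $2^{-j_1}<A^{-1}$. Reindexing $j=j_1+k$ for $k\geq 0$ gives
\[
\sum_{j\in\mathbb{Z},\,2^j>A}2^{-j\beta}=\sum_{k=0}^\infty 2^{-(j_1+k)\beta}=\frac{2^{-j_1\beta}}{1-2^{-\beta}}\leq \frac{A^{-\beta}}{1-2^{-\beta}}.
\]
Choosing $C_\beta=(1-2^{-\beta})^{-1}$ covers both cases.

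There is no real obstacle here: the statement is just the observation that a geometric series truncated at a threshold is comparable to its largest term, and $\beta>0$ is exactly what makes the ratio $2^{-\beta}<1$ so that the geometric series converges. The only thing to be mildly careful about is that the indexing set is all of $\mathbb{Z}$ (not $\mathbb{N}$), but this is harmless because the tail in the wrong direction is excluded by the constraint $2^j\leq A$ (respectively $2^j>A$).
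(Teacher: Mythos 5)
Your proof is correct. The paper does not actually write out a proof of this lemma; it only remarks that the estimates "can be easily proved by comparing the sums with the corresponding integrals," i.e.\ by the integral test. You instead sum the geometric series exactly, which is at least as clean: the sets $\{j:2^j\leq A\}$ and $\{j:2^j>A\}$ are half-lines $\{j\leq j_0\}$ and $\{j\geq j_1\}$ in $\mathbb{Z}$, the resulting series converge precisely because $\beta>0$, and the largest term is controlled by $A^{\pm\beta}$ by the definition of $j_0$ and $j_1$. Your argument also produces the explicit constant $C_\beta=(1-2^{-\beta})^{-1}$, which the integral-comparison route would not give as directly. Both approaches are routine and equally valid; there is nothing to fix.
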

The Hardy-Littlewood-Sobolev inequality plays a crucial role in the proof of Theorem  \ref{Strichartz-single}. To sharpen the inequalities in Theorem \ref{Strichartz-single} for functions in Lebesgue spaces, we will employ its refined form for functions in Lorentz spaces.
\begin{lemma}[see \cite{Neil}] \label{H-L-S-Lorentz}
For any $\sigma\in (0,1)$, $p_1,p_2,r_1,r_2\in (1,+\infty)$ such that 
\begin{equation*}
\frac{1}{p_1}+\frac{1}{p_2}+\sigma=2 \text{ and } \frac{1}{r_1}+\frac{1}{r_2}\geq1,
\end{equation*}
we have
\begin{equation*}
    \left|\int_\mathbb{R}\int_\mathbb{R}\frac{g(t)h(s)}{|t-s|^\sigma}dsdt\right|\lesssim \|g\|_{L^{p_1,r_1}(\mathbb{R})}\|h\|_{L^{p_2,r_2}(\mathbb{R})}.
\end{equation*}
\end{lemma}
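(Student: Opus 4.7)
The plan is to recognize the double integral as the pairing $\langle g, I_\sigma h\rangle$ with the one-dimensional Riesz potential
\begin{equation*}
I_\sigma h(t) = \int_{\mathbb{R}} \frac{h(s)}{|t-s|^\sigma}\,ds,
\end{equation*}
and then to combine a Lorentz-space mapping property of $I_\sigma$ with a Hölder-type inequality in Lorentz spaces. This splits the estimate into two independent, well-understood ingredients.

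First I would establish the Lorentz-space Hardy--Littlewood--Sobolev mapping property
\begin{equation*}
\|I_\sigma h\|_{L^{p_1', r_2}(\mathbb{R})} \lesssim \|h\|_{L^{p_2, r_2}(\mathbb{R})}.
\end{equation*}
Note that the condition $\frac{1}{p_1}+\frac{1}{p_2}+\sigma = 2$ rewrites as $\frac{1}{p_1'} = \frac{1}{p_2} - (1-\sigma)$, which is exactly the Hardy--Littlewood--Sobolev scaling relation for the target exponent $p_1'$. Since the kernel $|t|^{-\sigma}$ lies in the weak Lebesgue space $L^{1/(1-\sigma),\infty}(\mathbb{R})$, the desired bound follows from O'Neil's convolution inequality $L^{p_2,r_2}(\mathbb{R}) \ast L^{1/(1-\sigma),\infty}(\mathbb{R}) \hookrightarrow L^{p_1', r_2}(\mathbb{R})$, valid because $1<p_2<p_1'<\infty$. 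Alternatively, this can be obtained by real interpolation from the classical strong-type Hardy--Littlewood--Sobolev endpoints.

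Second, I would apply Hölder's inequality in Lorentz spaces to the pairing of $g$ against $I_\sigma h$. For conjugate first indices $p_1, p_1'$ with $\frac{1}{p_1} + \frac{1}{p_1'} = 1$, the bound
\begin{equation*}
\left|\int_{\mathbb{R}} g(t)\, F(t)\,dt\right| \lesssim \|g\|_{L^{p_1, r_1}(\mathbb{R})}\, \|F\|_{L^{p_1', s}(\mathbb{R})}
\end{equation*}
holds whenever $\frac{1}{r_1}+\frac{1}{s}\ge 1$. Taking $F = I_\sigma h$ and $s = r_2$, the index constraint becomes precisely the hypothesis $\frac{1}{r_1}+\frac{1}{r_2}\ge 1$. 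Chaining this with the Hardy--Littlewood--Sobolev step gives the claim.

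The main obstacle is not analytical but bookkeeping: one must verify that the index hypotheses $p_1, p_2, r_1, r_2 \in (1,\infty)$ and $\frac{1}{r_1}+\frac{1}{r_2}\ge 1$ are consistent with the strict inequalities $1<p_2<p_1'<\infty$ needed for the strong-type $(p_2,r_2)\to(p_1',r_2)$ mapping of $I_\sigma$, and with the second-index constraint in the Lorentz Hölder inequality. The relation $\frac{1}{p_1}+\frac{1}{p_2}+\sigma=2$ with $\sigma\in(0,1)$ and $p_1,p_2>1$ automatically yields $p_2<p_1'$, so all index conditions fit together, and the proof proceeds without any further difficulty. A fully self-contained alternative would be to apply Marcinkiewicz-type real interpolation directly to the bilinear form $B(g,h)=\iint |t-s|^{-\sigma} g(t)h(s)\,ds\,dt$, starting from its weak-type behavior at the corner cases $(p_1, p_2) = (1, \frac{1}{1-\sigma})$ and $(\frac{1}{1-\sigma}, 1)$.
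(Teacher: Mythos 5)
The paper offers no proof of this lemma; it is quoted directly from O'Neil's paper \cite{Neil}, and your argument (Riesz potential plus O'Neil's convolution theorem plus H\"older in Lorentz spaces) is exactly the standard route that reference provides, so the overall strategy is sound and the index bookkeeping you describe does close up: the relation $\frac{1}{p_1}+\frac{1}{p_2}+\sigma=2$ gives $\frac{1}{p_1'}=\frac{1}{p_2}-(1-\sigma)\in(0,\frac{1}{p_2})$, and the H\"older step with second indices $r_1$ and $r_2$ requires precisely $\frac{1}{r_1}+\frac{1}{r_2}\ge 1$.

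One concrete slip needs fixing: the kernel $|t|^{-\sigma}$ on $\mathbb{R}$ has decreasing rearrangement $\sim s^{-\sigma}$, so it belongs to $L^{1/\sigma,\infty}(\mathbb{R})$, \emph{not} to $L^{1/(1-\sigma),\infty}(\mathbb{R})$ (these agree only when $\sigma=\tfrac12$). As written, your displayed embedding $L^{p_2,r_2}\ast L^{1/(1-\sigma),\infty}\hookrightarrow L^{p_1',r_2}$ violates the Young--O'Neil scaling $1+\frac{1}{r}=\frac{1}{p_2}+\frac{1}{q}$, which with $q=1/(1-\sigma)$ would give $\frac1r=\frac{1}{p_2}-\sigma\neq\frac{1}{p_1'}$. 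With the correct class $q=1/\sigma$ the scaling reads $1+\frac{1}{p_1'}=\frac{1}{p_2}+\sigma$, which is exactly the hypothesis, and O'Neil's theorem (applicable since $p_2>1$, $1/\sigma>1$, $\frac{1}{p_2}+\sigma>1$, and $p_1'<\infty$) then yields $\|I_\sigma h\|_{L^{p_1',r_2}}\lesssim\|h\|_{L^{p_2,r_2}}$ as you intend. After this correction the proof is complete.
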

We further need to prove the following new inequality,  which can be viewed as  a variant similar to Young's convolution inequality.
\begin{lemma}\label{new inequality} Let $1\leq p,q,r\leq \infty$ satisfy $1+\frac{1}{r}=\frac{1}{p}+\frac{1}{q}$. Suppose $f=\tilde{f}(|\cdot|)$ is a radial function in $L_\kappa^p(\mathbb{R}^n)$ and $g\in L_\kappa^q(\mathbb{R}^n)$. Then the function
\begin{equation*}
G(x)=\int_{\mathbb{R}^n}\tilde{f}\left(\big||x|-|y|\big|\right)g(y)h_\kappa(y) dy,
\end{equation*}
is a radial function in $L_\kappa^r(\mathbb{R}^n)$ satisfying
\begin{equation*}
\|G\|_{L_\kappa^r(\mathbb{R}^n)}\leq \|f\|_{L_\kappa^p(\mathbb{R}^n)} \|g\|_{L_\kappa^q(\mathbb{R}^n)}.  
\end{equation*}
\end{lemma}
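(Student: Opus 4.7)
The plan is to exploit the radial structure of the kernel to reduce the claim to a one-dimensional weighted convolution-type estimate, and then establish this 1D estimate by Riesz--Thorin interpolation.

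First, observe that since $\tilde f(\big||x|-|y|\big|)$ depends on $y$ only through $|y|$, the function $G$ is automatically radial. Writing $y=\rho\sigma$ with $\rho>0$ and $\sigma\in S^{n-1}$ and using the homogeneity $h_\kappa(\rho\sigma)=\rho^{2\gamma}h_\kappa(\sigma)$ together with $dy=\rho^{n-1}d\rho\,d\sigma$, we obtain $G(x)=\tilde G(|x|)$ with
\[
\tilde G(r) = \int_0^\infty \tilde f(|r-\rho|)\,\tilde g_*(\rho)\,\rho^{N-1}\,d\rho,\qquad \tilde g_*(\rho):=\int_{S^{n-1}} g(\rho\sigma)\,h_\kappa(\sigma)\,d\sigma.
\]
This confirms the radiality of $G$.

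Next, set $d\mu(\rho)=\rho^{N-1}d\rho$ on $(0,\infty)$ and $d_\kappa:=\int_{S^{n-1}}h_\kappa(\sigma)\,d\sigma$. Polar integration gives $\|G\|_{L^r_\kappa(\mathbb{R}^n)}=d_\kappa^{1/r}\|\tilde G\|_{L^r(d\mu)}$ and $\|f\|_{L^p_\kappa(\mathbb{R}^n)}=d_\kappa^{1/p}\|\tilde f\|_{L^p(d\mu)}$, while H\"older in $\sigma$ yields $\|\tilde g_*\|_{L^q(d\mu)}\leq d_\kappa^{1/q'}\|g\|_{L^q_\kappa(\mathbb{R}^n)}$. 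The hypothesis $1+\tfrac1r=\tfrac1p+\tfrac1q$ makes the exponents of $d_\kappa$ cancel exactly, and the original inequality reduces to the one-dimensional weighted estimate
\[
\|Th\|_{L^r(d\mu)}\leq \|\tilde f\|_{L^p(d\mu)}\,\|h\|_{L^q(d\mu)},\qquad (Th)(r):=\int_0^\infty \tilde f(|r-\rho|)\,h(\rho)\,d\mu(\rho).
\]

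To prove this 1D estimate, I would interpolate via Riesz--Thorin between the three corners $(p,q,r)=(1,1,1)$, $(1,\infty,\infty)$, and $(\infty,1,\infty)$. The key inequality underpinning both the $L^1\to L^\infty$ and $L^1\to L^1$ endpoints is the uniform weight comparison
\[
\sup_{r>0}\int_0^\infty \tilde f(|r-\rho|)\,\rho^{N-1}\,d\rho \ \leq\ \int_0^\infty \tilde f(s)\,s^{N-1}\,ds,
\]
from which Fubini yields $L^1\to L^1$; the dual bound with $r,\rho$ swapped gives the remaining endpoint.

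The main obstacle is precisely the weight-comparison step above: because $\rho^{N-1}$ is not translation-invariant, the direct substitution $u=\rho-r$ produces $(r+u)^{N-1}$, which is \emph{not} pointwise $\leq u^{N-1}$. I plan to handle this by splitting into the near-diagonal region $r/2\leq\rho\leq 2r$, where $\rho^{N-1}\sim r^{N-1}$ and a standard 1D Young-type argument on $\mathbb R$ (after even extension $\bar F(s):=\tilde f(|s|)$) closes the estimate, together with the far regions $\rho\leq r/2$ and $\rho\geq 2r$, where $|r-\rho|\gtrsim\max(r,\rho)$ so that the kernel $\tilde f(|r-\rho|)$ is small and can be absorbed into $\tilde f(s)s^{N-1}$ after a further change of variables exploiting the radial positivity of $\tilde f$. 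Once this weight comparison is secured, Riesz--Thorin interpolation on the three endpoints yields the full family of exponents $(p,q,r)$ prescribed by $1+\tfrac1r=\tfrac1p+\tfrac1q$, completing the proof.
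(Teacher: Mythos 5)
Up to the point where you isolate the one-dimensional weighted inequality
\[
\Big\|\int_0^\infty \tilde f(|\cdot-\rho|)h(\rho)\,\rho^{N-1}d\rho\Big\|_{L^r(\rho^{N-1}d\rho)}\leq \|\tilde f\|_{L^p(\rho^{N-1}d\rho)}\,\|h\|_{L^q(\rho^{N-1}d\rho)},
\]
your argument coincides with the paper's proof: the paper likewise passes to polar coordinates, applies the generalized Minkowski inequality and H\"older on the sphere, and then disposes of the radial integral by invoking ``Young's convolution inequality'' for the measure $s^{N-1}ds$. So the step you single out as the main obstacle is precisely the step the paper leaves unjustified, and your bookkeeping of the powers of $d_\kappa$ around it is correct.

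That step, however, is a genuine gap, and your proposed near/far splitting cannot close it, because the failure of the weight comparison lives exactly in the near-diagonal region. For $r/2\le\rho\le 2r$ one has $\rho^{N-1}\sim r^{N-1}$, so that portion of $\int_0^\infty\tilde f(|r-\rho|)\rho^{N-1}d\rho$ is comparable to $r^{N-1}\int_0^{r/2}\tilde f(u)\,du$; taking $\tilde f=\epsilon^{-1}\mathbf 1_{[0,\epsilon]}$ this is $\approx 2r^{N-1}$ for every $r\ge\epsilon$, while $\int_0^\infty\tilde f(s)s^{N-1}ds=\epsilon^{N-1}/N\to 0$ when $N>1$. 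Hence both corners $(1,1,1)$ and $(1,\infty,\infty)$ of your Riesz--Thorin scheme are false, and no decomposition can rescue an interpolation anchored there. Worse, the defect is in the statement, not merely the method: with $n=2$, $\kappa\equiv 0$ (so $N=2$, $h_\kappa\equiv 1$), $\tilde f=\mathbf 1_{[0,1]}$ and $g=\mathbf 1_{\{R\le|y|\le R+1\}}$, one gets $G(x)\gtrsim R^{N-1}$ on $\{R+\tfrac14\le|x|\le R+\tfrac34\}$, so $\|G\|_{L^r_\kappa}\gtrsim R^{(N-1)(1+1/r)}$, whereas $\|f\|_{L^p_\kappa}\|g\|_{L^q_\kappa}\approx R^{(N-1)/q}=R^{(N-1)(1+1/r-1/p)}$; for every $p<\infty$ the left-hand side dominates as $R\to\infty$. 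The reason is structural: for $|y|$ large the kernel $\tilde f(\,||x|-|y||\,)$ spreads its mass over a whole annulus of volume $\sim|y|^{N-1}$, unlike the convolution kernel $f(x-y)$, so the constant must be a quantity like $\sup_{t>0}\|\tilde f(|\cdot-t|)\|_{L^p(\rho^{N-1}d\rho)}$ rather than $\|f\|_{L^p_\kappa}$. Your instinct about where the difficulty sits is exactly right; the correct conclusion is that the inequality as stated needs to be amended, not that a cleverer splitting will prove it.
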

\begin{proof} We assume $1\leq p,q,r<\infty$. From the expression of the function $h$, it is easy to see $h$ is radial and we denote $G(x)=\widetilde{G}(\rho)$, where $\rho=|x|$. Let $C_\kappa=\int_{\mathbb{S}^{n-1}}h_\kappa(y)d\sigma(y)$ and we have 
\begin{equation}\label{G}
\|G\|_{L_\kappa^r(\mathbb{R}^n)}=C_\kappa^\frac{1}{r}\|\widetilde{G}\|_{L^r(\mathbb{R}^+,\rho^{N-1}d\rho)}.
\end{equation}
We rewrite 
\begin{align*}
\widetilde{G}(\rho)&=\int_{\mathbb{R}^n}\tilde{f}\left(\big|\rho-|y|\big|\right) g(y)h_\kappa(y)dy\\
&=\int_{\mathbb{S}^{n-1}}\left(\int_0^\infty \tilde{f}\left(|\rho-s|\right) g(sy)s^{N-1}ds\right)h_\kappa(y)d\sigma(y).
\end{align*}
Using the generalized Minkowski's inequality, Young's convolution inequality and H\"older's inequality, we get
\begin{align*}
\|\widetilde{G}\|_{L^r(\mathbb{R}^+,\rho^{N-1}d\rho)}&\leq\int_{\mathbb{S}^{n-1}}\left\|\int_0^\infty \tilde{f}\left(|\cdot-s|\right) g(sy)s^{N-1}ds\right\|_{L^r(\mathbb{R}^+,\rho^{N-1}d\rho)}h_\kappa(y)d\sigma(y)\\
&\leq\int_{\mathbb{S}^{n-1}}\|\tilde{f}(|\cdot|)\|_{L^p(\mathbb{R}^+,\rho^{N-1}d\rho)}\|g(\cdot y)\|_{L^q(\mathbb{R}^+,\rho^{N-1}d\rho)}h_\kappa(y)d\sigma(y)\\ 
&=C_\kappa^{-\frac{1}{p}}\|f\|_{L_\kappa^p(\mathbb{R}^n)}\int_{\mathbb{S}^{n-1}}\|g(\cdot y)\|_{L^q(\mathbb{R}^+,\rho^{N-1}d\rho)}h_\kappa(y)d\sigma(y)\\ 
&\leq C_\kappa^{\frac{1}{q^\prime}-\frac{1}{p}}\|f\|_{L_\kappa^p(\mathbb{R}^n)}\left(\int_{\mathbb{S}^{n-1}}\|g(\cdot y)\|^q_{L^q(\mathbb{R}^+,\rho^{N-1}d\rho)}h_\kappa(y)d\sigma(y)\right)^\frac{1}{q}\\
&= C_\kappa^{-\frac{1}{r}}\|f\|_{L_\kappa^p(\mathbb{R}^n)}\|g\|_{L_\kappa^q(\mathbb{R}^n)},
\end{align*}
and the desired result follows immediately from \eqref{G}.

In the case where at least one of $p,q,r$ is $\infty$, by an argument analogous to the above, we can also derive the desired results.
\end{proof}
Our proof of frequency localized estimates is based on a bilinear interpolation argument inspired by ideas in \cite{KT}. First we recall some basic results about the $K$-method of real interpolation. We refer the reader to \cite{Bennett, Bergh-Lofstrom} for details on the development of this theory. Here we only recall the essentials to be used in the sequel. Let $A_0, A_1$ be Banach spaces contained in some larger Banach space $A$. For every $a\in A_0+A_1$ and $t>0$, we define the $K$-functional of real interpolation by
\begin{equation*}
K(t,a,A_0,A_1)=\inf_{a=a_0+a_1}\left(\|a_0\|_{A_0}+\|a_1\|_{A_1}\right).
\end{equation*}For $0<\theta<1$ and $1\leq q\leq \infty$, we denote by $(A_0,A_1)_{\theta,q}$ the real interpolation spaces between $A_0$ and $A_1$ defined as
\begin{equation*}
    (A_0,A_1)_{\theta,q}=\left\{a\in A_0+A_1: \|a\|_{(A_0,A_1)_{\theta,q}}=\left(\int_0^\infty \Big(t^{-\theta}K(t,a,A_0,A_1)\Big)^q\frac{dt}{t}\right)^\frac{1}{q}<\infty\right\}.
\end{equation*}
The bilinear interpolation we shall use is the following.
\begin{theorem}[see \cite{Bergh-Lofstrom}]\label{bilinear-argument} If $A_0,A_1,B_0,B_1,C_0,C_1$ are Banach spaces and the bilinear operator $T$ is bounded from 
\begin{align*}
    &A_0\times B_0\rightarrow C_0\\
    &A_0\times B_1\rightarrow C_1\\
    &A_1\times B_0\rightarrow C_1,
\end{align*}
then $T$ is also bounded from
\begin{equation*}
(A_0,A_1)_{\theta_0,pr}\times (B_0,B_1)_{\theta_1,qr}\rightarrow (C_0,C_1)_{\theta,r},
\end{equation*}
for any $0<\theta_0,\theta_1<\theta<1$, $1\leq p,q,r\leq\infty$ such that $\frac{1}{p}+\frac{1}{q}\geq 1$ and $\theta=\theta_0+\theta_1$.
\end{theorem}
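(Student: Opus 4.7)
The plan is to use the $K$-method characterization of real interpolation: an element $c$ lies in $(C_0,C_1)_{\theta,r}$ precisely when the function $t \mapsto t^{-\theta} K(t,c;C_0,C_1)$ lies in $L^r(\mathbb{R}_+, dt/t)$. For each $s,u > 0$ I would pick near-optimal decompositions $a = a_0(s) + a_1(s)$ and $b = b_0(u) + b_1(u)$ realizing the respective $K$-functionals, so that $\|a_0(s)\|_{A_0} + s\|a_1(s)\|_{A_1} \lesssim K(s,a;A_0,A_1)$ and $\|b_0(u)\|_{B_0} + u\|b_1(u)\|_{B_1} \lesssim K(u,b;B_0,B_1)$.

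Expanding bilinearly, $T(a,b)$ splits into four pieces. The three hypothesized mappings immediately produce the bounds
\begin{align*}
\|T(a_0(s),b_0(u))\|_{C_0} &\lesssim K(s,a)\, K(u,b), \\
\|T(a_0(s),b_1(u))\|_{C_1} &\lesssim u^{-1} K(s,a)\, K(u,b), \\
\|T(a_1(s),b_0(u))\|_{C_1} &\lesssim s^{-1} K(s,a)\, K(u,b),
\end{align*}
which, after linking $s$ and $u$ to the $K$-functional parameter $t$ (for instance $s=u=t$), control $K\bigl(t, T(a,b) - T(a_1(s),b_1(u)); C_0, C_1\bigr)$.

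The main obstacle is the leftover diagonal piece $T(a_1(s),b_1(u))$: no mapping $A_1 \times B_1 \to C_j$ is assumed. The standard remedy is iterative refinement: re-decompose $a_1(s)$ itself at finer dyadic scales $s' > s$ as $a_1(s) = a_0^{(1)}(s') + a_1^{(1)}(s')$ and apply the available bound $A_0 \times B_1 \to C_1$ to the $A_0$-piece; iterating produces a telescoping series whose sum converges precisely because $\theta_0 + \theta_1 = \theta < 1$. Equivalently, one may work with a Lions--Peetre type discrete $J$-method decomposition $a = \sum_k a_k$, $b = \sum_l b_l$ and control $T(a,b) = \sum_{k,l} T(a_k,b_l)$ by splitting into diagonal ($k=l$) and off-diagonal contributions, where again the strict inequality $\theta<1$ is what renders the off-diagonal sums summable.

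Assembling the four pieces and taking the $L^r(dt/t)$ norm with weight $t^{-\theta}$, a mixed-norm discrete Young--Hardy convolution inequality---whose summability reduces exactly to the condition $\tfrac{1}{p}+\tfrac{1}{q} \geq 1$ together with $\theta = \theta_0 + \theta_1$---yields the desired estimate $\|T(a,b)\|_{(C_0,C_1)_{\theta,r}} \lesssim \|a\|_{(A_0,A_1)_{\theta_0,pr}} \|b\|_{(B_0,B_1)_{\theta_1,qr}}$. Both conditions are essential: the strict inequality $\theta<1$ is used to sum the off-diagonal contributions generated by the missing $A_1\times B_1$ corner, while the exponent condition $\tfrac{1}{p}+\tfrac{1}{q} \geq 1$ is what closes the mixed-norm interpolation argument on the final integration in $t$.
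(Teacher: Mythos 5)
The paper does not prove this statement; it is quoted verbatim from Bergh--L\"ofstr\"om (it is Exercise 3.13.5(b) there), so there is no internal proof to compare against. Judged on its own terms, your proposal correctly identifies the central difficulty (the uncontrolled corner $A_1\times B_1$) and names the standard rigorous route in passing: the discrete Lions--Peetre $J$-method. That route works cleanly because each block $u_j\in A_0\cap A_1$, $v_k\in B_0\cap B_1$, so every term $T(u_j,v_k)$ is estimated in $C_0$ via $A_0\times B_0$ and in $C_1$ via the \emph{minimum} of the two available bounds, giving $J(\max(2^j,2^k),T(u_j,v_k))\leq J(2^j,u_j)J(2^k,v_k)$; no $A_1\times B_1$ bound is ever needed. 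Your identification of where $\frac1p+\frac1q\geq1$ enters is also right: after writing $\alpha_j=2^{-j\theta_0}J(2^j,u_j)\in\ell^{pr}$ and $\beta_k=2^{-k\theta_1}J(2^k,v_k)\in\ell^{qr}$, the off-diagonal sums are H\"older--Young convolutions that close exactly when $p'\geq q$.

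Two criticisms. First, your primary narrative --- the $K$-method with ``iterative refinement'' of the diagonal piece $T(a_1(s),b_1(u))$ --- has a genuine gap as written: re-decomposing $a_1(s)=a_0^{(1)}(s')+a_1^{(1)}(s')$ and telescoping always leaves a remainder $T(a_1^{(k)}(s^{(k)}),b_1(u))$ which is never controlled by any hypothesis, and there is no a priori continuity of $T$ on $A_1\times B_1$ that would let you pass to the limit. You cannot repair this within the $K$-framework without effectively reproving the $J$/$K$ equivalence theorem, so the $J$-method should be the proof, not an aside. Second, the roles of the parameter conditions are slightly misattributed: summability of the off-diagonal sums requires the geometric decay $2^{-l\theta_0}$, $2^{-l\theta_1}$, i.e.\ $\theta_0>0$ and $\theta_1>0$ (equivalently $\theta_0,\theta_1<\theta$), not $\theta<1$; the condition $\theta<1$ is what makes the $(\,\cdot\,)_{\theta,r}$ spaces and the $J$/$K$ equivalence available in the first place. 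With the $J$-method promoted to the main argument and these two points fixed, the proof is the standard one.
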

The real interpolation space identities we shall use are
\begin{equation}\label{L-qp}
    \left(L^{q_0}(\mathbb{R},L_\kappa^{p_0}(\mathbb{R}^n)),L^{q_1}(\mathbb{R},L_\kappa^{p_1}(\mathbb{R}^n))\right)_{\theta,q}=L^{q}(\mathbb{R},L_\kappa^{p,q}(\mathbb{R}^n)),
\end{equation}
whenever $\theta\in (0,1)$, $p_0,p_1,q_0,q_1,p,q\in [1,\infty)$ are such $\frac{1}{q}=\frac{1-\theta}{q_0}+\frac{\theta}{q_1}$ and $\frac{1}{p}=\frac{1-\theta}{p_0}+\frac{\theta}{p_1}$, 
\begin{equation}\label{l-qp}
    \left(\ell^{q_0,r_0},\ell^{q_1,r_1}\right)_{\theta,r}=\ell^{q,r},
\end{equation}
whenever $\theta\in (0,1)$, $q_0,q_1,q\in [1,\infty)$ and $r_0,r_1,r\in[1,\infty]$ are such $\frac{1}{q}=\frac{1-\theta}{q_0}+\frac{\theta}{q_1}$, and
\begin{equation}\label{l-infty-s}
\left(\ell_\infty^{s_0},\ell_\infty^{s_1}\right)_{\theta,1}=\ell^s_1,
\end{equation}
whenever $\theta\in (0,1)$ and $s_0,s_1,s\in\mathbb{R}$ are such $s=(1-\theta)s_0+\theta s_1$, where $\ell_q^s=L^q(\mathbb{Z}, 2^{js}dj)$ are weighted sequence spaces and $dj$ is counting measure.

\section{Refinement of Theorem  \ref{Strichartz-single} and \ref{M-S} in Lorentz space}\label{sec4}
In this section, we generalize the estimates from Theorems \ref{Strichartz-single} and \ref{M-S} by upgrading them from Lebesgue spaces to Lorentz spaces. This refinement is a key step in the subsequent real interpolation process used to establish the global orthonormal results. We start with the following result. 
 \begin{theorem}
Assume $p,r\geq1$, $q>1$ and $\frac{2}{q}+\frac{N}{p}=N$. Then the Dunkl-Schr\"{o}dinger operator $e^{it\Delta_\kappa}$ is bounded from $L_\kappa^2(\mathbb{R}^n)$ to $L^{2q,2r}(\mathbb{R}, L_\kappa^{2p}(\mathbb{R}^n))$, i.e.,
\begin{equation*}
\|e^{it\Delta_\kappa}f\|_{L^{2q,2r}(\mathbb{R},L_\kappa^{2p}(\mathbb{R}^n))}\lesssim \|f\|_{L_\kappa^2(\mathbb{R}^n)}.
\end{equation*}    
\end{theorem}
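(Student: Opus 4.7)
The plan is to adapt the classical $TT^*$ argument of Keel-Tao to the Lorentz-space setting, using the refined Hardy-Littlewood-Sobolev inequality from Lemma \ref{H-L-S-Lorentz} in place of its Lebesgue counterpart. Setting $Tf(t,\cdot) = e^{it\Delta_\kappa}f$ and invoking the duality
$(L^{2q,2r}(\mathbb{R},L_\kappa^{2p}(\mathbb{R}^n)))^* = L^{(2q)',(2r)'}(\mathbb{R},L_\kappa^{(2p)'}(\mathbb{R}^n))$,
the target estimate $T\colon L_\kappa^2 \to L^{2q,2r}(L_\kappa^{2p})$ becomes equivalent to the boundedness of
$$TT^*F(t,x) = \int_{\mathbb{R}} e^{i(t-s)\Delta_\kappa}F(s,\cdot)(x)\,ds$$
from $L^{(2q)',(2r)'}(\mathbb{R},L_\kappa^{(2p)'}(\mathbb{R}^n))$ into $L^{2q,2r}(\mathbb{R},L_\kappa^{2p}(\mathbb{R}^n))$.

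Next, applying the dispersive estimate \eqref{dispersive} at exponent $2p$ together with Minkowski's integral inequality produces the pointwise-in-$t$ bound
$$\|TT^*F(t,\cdot)\|_{L_\kappa^{2p}(\mathbb{R}^n)} \lesssim \int_{\mathbb{R}} \frac{\|F(s,\cdot)\|_{L_\kappa^{(2p)'}(\mathbb{R}^n)}}{|t-s|^{1/q}}\,ds,$$
where the exponent $1/q$ comes from the admissibility identity $N\bigl(\tfrac{1}{2}-\tfrac{1}{2p}\bigr)=\tfrac{1}{q}$, equivalent to $\tfrac{2}{q}+\tfrac{N}{p}=N$. Writing $G(s)=\|F(s,\cdot)\|_{L_\kappa^{(2p)'}(\mathbb{R}^n)}$, it therefore suffices to show that the fractional integral $I_{1/q}G(t)=\int_{\mathbb{R}}|t-s|^{-1/q}G(s)\,ds$ maps $L^{(2q)',(2r)'}(\mathbb{R})$ into $L^{2q,2r}(\mathbb{R})$.

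This last mapping property follows from Lemma \ref{H-L-S-Lorentz} by duality. Writing
$$\|I_{1/q}G\|_{L^{2q,2r}(\mathbb{R})} = \sup_{\|h\|_{L^{(2q)',(2r)'}(\mathbb{R})}\leq 1} \left|\int_{\mathbb{R}}\int_{\mathbb{R}} \frac{G(s)\,h(t)}{|t-s|^{1/q}}\,ds\,dt\right|$$
and applying Lemma \ref{H-L-S-Lorentz} with $p_1=p_2=(2q)'$, $r_1=r_2=(2r)'$, and $\sigma=1/q$ furnishes the estimate. The hypotheses of the lemma are satisfied because $q>1$ gives $\sigma\in(0,1)$, the identity $\tfrac{1}{p_1}+\tfrac{1}{p_2}+\sigma=\tfrac{2}{(2q)'}+\tfrac{1}{q}=2$ is automatic, and $\tfrac{1}{r_1}+\tfrac{1}{r_2}=\tfrac{2}{(2r)'}\geq 1$ is equivalent to the assumption $r\geq 1$.

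The principal obstacle is the careful bookkeeping of exponents linking the decay rate in the dispersive estimate to the fractional integration index required by the Lorentz HLS inequality; the algebra closes precisely because of admissibility. Degenerate boundary cases such as $p=1$ (which forces $q=\infty$) lie outside this HLS framework and should be handled separately using the unitarity of $e^{it\Delta_\kappa}$ on $L_\kappa^2(\mathbb{R}^n)$.
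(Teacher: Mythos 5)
Your proposal is correct and follows essentially the same route as the paper: a $TT^*$/duality reduction, the dispersive estimate \eqref{dispersive} giving the kernel decay $|t-s|^{-N/(2p')}=|t-s|^{-1/q}$ under admissibility, and then the Lorentz-space Hardy--Littlewood--Sobolev inequality of Lemma \ref{H-L-S-Lorentz} with $p_1=p_2=(2q)'$, $r_1=r_2=(2r)'$, $\sigma=1/q$. The only cosmetic difference is that the paper bounds $\|(e^{it\Delta_\kappa})^*g\|_{L_\kappa^2}^2$ directly as a bilinear form and applies the lemma there, whereas you phrase the same computation as the $L^{(2q)',(2r)'}\to L^{2q,2r}$ boundedness of the one-dimensional fractional integral obtained after Minkowski's inequality.
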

\begin{proof}
    By duality, it is equivalent to prove that its adjoint operator $\left(e^{it\Delta_\kappa}\right)^*$ is bounded from $L^{(2q)^\prime,(2r)^\prime}(\mathbb{R}, L_\kappa^{(2p)^\prime}(\mathbb{R}^n))$ to $L_\kappa^2(\mathbb{R}^n)$. In fact, by \eqref{dispersive} and Lemma \ref{H-L-S-Lorentz}, we have
    \begin{align*}
       \left \|\left(e^{it\Delta_k}\right)^*g(x)\right\|^2_{L_\kappa^2(\mathbb{R}^n)}&=  \left \|\int_{\mathbb{R}}e^{-is\Delta_\kappa} g(x,s)ds\right\|^2_{L_\kappa^2(\mathbb{R}^n)}\\&=\int_{\mathbb{R}^2}\langle e^{-it\Delta_\kappa} g(\cdot,t),e^{-is\Delta_\kappa} g(\cdot,s)\rangle_{L_\kappa^2(\mathbb{R}^n)}dtds\\
        &=\int_{\mathbb{R}^2} \langle g(\cdot,t),e^{i(t-s)\Delta_\kappa} g(\cdot,s)\rangle_{L_\kappa^2(\mathbb{R}^n)}dtds\\
        &\leq \int_{\mathbb{R}^2} \|g(\cdot,t)\|_{L^{(2p)^\prime}(\mathbb{R}^n)}\|e^{i(t-s)\Delta_\kappa} g(\cdot,s)\|_{L^{2p}(\mathbb{R}^n)}dtds\\
        &\leq \int_{\mathbb{R}^2}\frac{\|g(\cdot,t)\|_{L^{(2p)^\prime}(\mathbb{R}^n)}\|g(\cdot,s)\|_{L^{(2p)^\prime}(\mathbb{R}^n)}}{|t-s|^\frac{N}{2p^\prime}} dtds\\
        &\leq \|g\|^2_{L^{(2q)^\prime,(2r)^\prime}(\mathbb{R}, L_\kappa^{(2p)^\prime}(\mathbb{R}^n))}.
    \end{align*} 
\end{proof}

\begin{theorem} \label{Lorentz-refined}
If $N\geq 1$, $(\frac{1}{p},\frac{1}{q})\in (A,F]$ and $\alpha=\frac{2p}{p+1}$, then we have
\begin{equation*}
			\bigg\|\sum_{j=1}^\infty \lambda_j|e^{it\Delta_\kappa}f_j|^2\bigg\|_{L^{q,\alpha}(\mathbb{R}, L_\kappa^p(\mathbb{R}^n))}\lesssim \|\{\lambda_j\}^\infty_{j=1}\|_{\ell^{\alpha}} 
\end{equation*}	
  holds for all families of orthonormal functions $\{f_j\}_{j=1}^\infty$ in $L_\kappa^2(\mathbb{R}^n)$ and all sequences $\{\lambda_j\}^\infty_{j=1}$ in $\ell^{\alpha}$. 
\end{theorem}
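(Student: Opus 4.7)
The approach is to convert the orthonormal estimate into a Schatten-class bound via duality, and then to establish that Schatten bound by complex interpolation between two carefully chosen endpoints, exploiting the Lorentz structure at each endpoint. By the duality principle (Lemma~\ref{duality-principle}) with time Lorentz index $r=\alpha$, the desired inequality is equivalent to
\[
\bigl\|W\,e^{it\Delta_\kappa}(e^{is\Delta_\kappa})^*\overline{W}\bigr\|_{\mathfrak{S}^{\alpha'}(L^2(\mathbb{R}, L_\kappa^2(\mathbb{R}^n)))} \lesssim \|W\|^2_{L^{2q',2\alpha'}(\mathbb{R}, L_\kappa^{2p'}(\mathbb{R}^n))}
\]
for all simple functions $W$. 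The operator on the left has integral kernel $W(x,t)\,\tau_x K_{i(t-s)}(-y)\,\overline{W(y,s)}$, so the pointwise decay $|\tau_x K_{i(t-s)}(-y)|\lesssim |t-s|^{-N/2}$ from \eqref{translator-decay}, together with the dispersive estimate \eqref{dispersive}, will serve as the basic analytic input.

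To establish the Schatten bound I would interpolate, via Lemma~\ref{interpolation schatten}, an analytic family $T_z$ of operators adapted to the critical line $\frac{2}{q}+\frac{N}{p}=N$, between two endpoints. At the Hilbert--Schmidt endpoint, $\mathfrak{S}^2$ coincides with the $L^2$ norm of the kernel; extracting the pointwise bound and applying the dispersive estimate reduces matters to controlling a singular time convolution of the form
\[
\int_{\mathbb{R}}\int_{\mathbb{R}} |t-s|^{-\sigma}\,\|W(\cdot,t)\|^{2}_{L_\kappa^{2p'_*}}\,\|W(\cdot,s)\|^{2}_{L_\kappa^{2p'_*}}\,dt\,ds,
\]
which is handled by the refined Lorentz Hardy--Littlewood--Sobolev inequality (Lemma~\ref{H-L-S-Lorentz}); this step is precisely what produces the Lorentz second index in the time variable. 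At the opposite endpoint (operator-norm type, $\alpha'=\infty$), the Schatten bound is equivalent by the $TT^*$ argument to a single-function Dunkl--Strichartz estimate, for which the Lorentz-refined version established as the first theorem of Section~\ref{sec4} furnishes exactly the required bound. Complex interpolation between these two endpoints then yields the desired Schatten estimate for every intermediate $\alpha'$, covering the whole segment $(A,F]$ with $\alpha=\tfrac{2p}{p+1}$.

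The main obstacle I anticipate is aligning the analytic family with the endpoint exponents so that simultaneously (i)~the Hilbert--Schmidt endpoint remains Lorentz-HLS tractable (the singular exponent $\sigma$ must lie in $(0,1)$, which pins down the spatial index $p'_*$), and (ii)~the operator endpoint uses precisely the Lorentz-refined single-function Strichartz bound available earlier in this section, so that the complex interpolation produces $L^{q,\alpha}$ in time with the prescribed $\alpha=\tfrac{2p}{p+1}$ at each intermediate point of $(A,F]$. The uniform bound \eqref{translator-decay} on the translated Dunkl--Schr\"odinger kernel, which has the same form as in the Euclidean case, together with the refined radial convolution inequality of Lemma~\ref{new inequality} when needed, ensures that each step of the Frank--Sabin scheme transfers intact from the Euclidean to the Dunkl setting.
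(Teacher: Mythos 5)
Your overall architecture --- duality principle, the Frank--Sabin analytic family $T_z$ with multiplier $\frac{1}{\Gamma(z+1)}(\tau-|\xi|^2)_+^z$ (so that $T_{-1}=AA^*$), a Hilbert--Schmidt endpoint handled by the uniform kernel bound \eqref{translator-decay} plus the Lorentz Hardy--Littlewood--Sobolev inequality of Lemma~\ref{H-L-S-Lorentz}, and Stein interpolation in Schatten spaces --- is exactly the paper's, and your $\mathfrak{S}^2$ endpoint is essentially correct (with the small caveat that the spatial exponent there is forced to be $L^2_\kappa$, since the Hilbert--Schmidt norm is the $L^2$ norm of the kernel and the kernel bound is uniform in $x,y$; no general $L^{2p'_*}_\kappa$ is available or needed at that endpoint).

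The genuine gap is your second endpoint. You propose to realize the $\alpha'=\infty$ endpoint as the single-function Lorentz-refined Strichartz estimate via the $TT^*$ identity $\|WAA^*\overline{W}\|_{\mathfrak{S}^\infty}=\|WA\|^2$. But that identity pertains only to $AA^*=T_{-1}$, i.e., to a single point $z=-1$ of the analytic family, not to a vertical line $\mathrm{Re}(z)=a_0$; Lemma~\ref{interpolation schatten} needs bounds on two lines strictly bracketing $-1$, and $T_{a_0+is}$ for $a_0\neq-1$ is not of the form $BB^*$, so no Strichartz-type mapping property is available for it. The alternative reading --- interpolating the fixed bilinear map $(W_1,W_2)\mapsto W_1AA^*W_2$ between $\mathfrak{S}^\infty$ (with Strichartz weights) and $\mathfrak{S}^2$ --- also fails, because the Hilbert--Schmidt bound for $AA^*$ itself would require controlling $\int\!\!\int |t-s|^{-N}\|W_1(\cdot,t)\|_{L^2_\kappa}^2\|W_2(\cdot,s)\|_{L^2_\kappa}^2\,dt\,ds$, and the singularity $|t-s|^{-N}$ is not HLS-admissible in one time dimension for $N\geq1$; this is precisely why the analytic family is introduced and why the $\mathfrak{S}^2$ bound must sit at $\mathrm{Re}(z)=-\tfrac{2}{\tilde q}-\tfrac{N}{2}$ rather than at $z=-1$. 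The correct companion endpoint, as in the paper, is the line $\mathrm{Re}(z)=0$, where $(\tau-|\xi|^2)_+^{is}/\Gamma(is+1)$ is a bounded Fourier--Dunkl multiplier, giving $\|W_1T_{is}W_2\|_{\mathfrak{S}^\infty}\lesssim\|W_1\|_{L^\infty L^\infty_\kappa}\|W_2\|_{L^\infty L^\infty_\kappa}$ by Plancherel; interpolating this with the $\mathfrak{S}^2$ line already produces the exponents $\alpha'=2p'$, $L^{2q',2\alpha'}_tL^{2p'}_\kappa$ on all of $(A,F]$ as $\tilde q$ ranges over $[2,4)$. The single-function Lorentz Strichartz theorem at the start of Section~\ref{sec4} is not an ingredient of this proof at all.
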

\begin{proof}Thanks to the duality principle Theorem \ref{duality-principle}, the desired estimate holds if and only if
\begin{equation}\label{T_1}
	\big\|WAA^*\overline{W}\big\|_{\mathfrak{S}^{\alpha^\prime}(L^2(\mathbb{R}, L_\kappa^2(\mathbb{R}^n)))}\lesssim \big\|W\big\|^2_{L^{2q^\prime,2\alpha^\prime}(\mathbb{R}, L_\kappa^{\alpha^\prime}(\mathbb{R}^n))}
		\end{equation}
  for all $W\in L^{2q^\prime,2\alpha^\prime}(\mathbb{R}, L_\kappa^{\alpha^\prime}(\mathbb{R}^n))$, where the operator $A$ is given by  $Af(x,t)=e^{it\Delta_\kappa}f(x)$.
 Now consider the analytic family of operators $T_z$ defined by
\begin{equation*}
    \mathcal{F}_{t,\kappa}(T_zg)(\xi,\tau)=\frac{1}{\Gamma(z+1)}(\tau-|\xi|^2)_+^z\mathcal{F}_{t,\kappa}g(\xi,\tau),
\end{equation*}
for $(\xi,\tau)\in\mathbb{R}^n\times\mathbb{R}$, where the Dunkl-Fourier transform of $g$ is defined as
\begin{equation*}
    \mathcal{F}_{t,\kappa}g(\xi,\tau)=\frac{1}{2\pi c_\kappa}\int_{\mathbb{R}}\int_{\mathbb{R}^{n}} g(x,t) e^{-it\tau}K(x,-i\xi)h_\kappa(x)dxdt.
\end{equation*}
Using the fact that $\mathcal{F}\left(\frac{t_+^z}{\Gamma(z+1)}\right)(\tau)=ie^{iz \pi/2}(\tau+i0)^{-z-1}=i\left(e^{iz \pi/2}\tau_+^{-z-1}-e^{-iz \pi/2}\tau^{-z-1}_-\right)$, where the latter equality is true for $z\not\in\mathbb{Z}$ (see \cite{Gelfand-Shilov}), we can conclude that
$T_{-1}=AA^*$.

 Therefore, by analytic interpolation, the required estimate \eqref{T_1}  will follow once we have the following two inequalities
\begin{equation}\label{plancherel}
    \big\|W_1T_z W_2\big\|_{\mathfrak{S}^\infty(L^2(\mathbb{R}, L_\kappa^2(\mathbb{R}^n)))}\leq C(Im(z))\big\|W_1\big\|_{L^\infty(\mathbb{R}, L_\kappa^\infty(\mathbb{R}^n))}\big\|W_2\big\|_{L^\infty(\mathbb{R}, L_\kappa^\infty(\mathbb{R}^n))}
\end{equation}
for $Re(z)=0$ and
\begin{equation}\label{T_z}
     \big\|W_1T_z W_2\big\|_{\mathfrak{S}^2(L^2(\mathbb{R}, L_\kappa^2(\mathbb{R}^n)))}\leq C(Im(z))\big\|W_1\big\|_{L^{\tilde{q},4}(\mathbb{R}, L_\kappa^2(\mathbb{R}^n))}\big\|W_2\big\|_{L^{\tilde{q},4}(\mathbb{R}, L_\kappa^2(\mathbb{R}^n))}
\end{equation}
for $Re(z)=-\frac{2}{\tilde{q}}-\frac{N}{2}$ and $\frac{N+1}{2}<-Re(z)\leq \frac{N+2}{2}$, where $C(Im(z))$ is a constant that grows exponentially with $Im(z)$.

When $Re(z)=0$, the estimate  \eqref{plancherel} follows immediately from Plancherel's theorem. On the other hand, if $Re(z)=-\frac{2}{\tilde{q}}-\frac{N}{2}$ and $\frac{N+1}{2}<-Re(z)\leq\frac{N+2}{2}$, the kernel of $T_z$ is given by
\begin{align*}
    &\quad \tau_x\left(\frac{1}{c_\kappa\Gamma(z+1)}\int_{\mathbb{R}}\tau_+^ze^{i\tau (t-s)}d\tau\int_{\mathbb{R}^n} e^{i(t-s)|\xi|^2} K(-y,i \xi)h_\kappa(\xi)d\xi\right)\\
    &=ie^{iz \pi/2}(t-s+i0)^{-z-1}\tau_x \mathcal{F}^{-1}_\kappa (e^{i(t-s)|\cdot|^2} )(-y)\\
    &=ie^{iz \pi/2}(t-s+i0)^{-z-1}\tau_x K_{-i(t-s)}(-y)
\end{align*}
and by \eqref{translator-decay}, we see that the kernel of $W_1T_z W_2$ is bounded by
$$C(Im(z))|W_1(x,t)||t-s|^{-Re(z)-1-\frac{N}{2}}|W_2(y,s)|.$$
Therefore, by using the  Hardy-Littlewood-Sobolev inequality in Lorentz spaces in Lemma \ref{H-L-S-Lorentz}, we have
\begin{align*}
   &\big\|W_1T_z W_2\big\|^2_{\mathfrak{S}^2(L^2(\mathbb{R}, L_\kappa^2(\mathbb{R}^n)))}\\
   &\leq C(Im(z))\int_{\mathbb{R}^2}\int_{\mathbb{R}^{2n}} |W_1(x,t)|^2|t-s|^{-2Re(z)-2-N}|W_2(y,s)|^2h_\kappa(x)h_\kappa(y)dxdydtds\\
   &= C(Im(z))\int_{\mathbb{R}^2}\frac{\|W_1(\cdot,t)\|^2_{L_\kappa^2(\mathbb{R}^n)}\|W_2(\cdot,s)\|^2_{L_\kappa^2(\mathbb{R}^n)}}{|t-s|^{2Re(z)+2+N}}dtds\\
   &\leq C(Im(z))\big\|W_1\big\|_{L^{\tilde{q},4}(\mathbb{R}, L_\kappa^2(\mathbb{R}^n))}\big\|W_2\big\|_{L^{\tilde{q},4}(\mathbb{R}, L_\kappa^2(\mathbb{R}^n))}
\end{align*}
and this completes the proof of the theorem.  
\end{proof}

\section{Proof of the frequency localized estimates in Theorem \ref{frequency-localized}}\label{sec5}
This section is devoted to prove the frequency localized estimates given in  Theorem \ref{frequency-localized}. Although the orthogonality is not preserved by the localised operator $P$, by the argument in Frank-Sabin \cite{FS}, we still obtain the following frequency localised estimates on the segment line $[B,A)$.
\begin{theorem} \label{adimissible}
If $N\geq 1$, $(\frac{1}{p},\frac{1}{q})\in [B,A)$ and $\alpha=\frac{2p}{p+1}$, then we have
\begin{equation*}
			\bigg\|\sum_{j=1}^\infty \lambda_j|e^{it\Delta_\kappa}Pf_j|^2\bigg\|_{L^q(\mathbb{R}, L_\kappa^p(\mathbb{R}^n))}\lesssim_\psi \|\{\lambda_j\}^\infty_{j=1}\|_{\ell^{\alpha}} 
\end{equation*}	
  holds for all families of orthonormal functions $\{f_j\}_{j=1}^\infty$ in $L_\kappa^2(\mathbb{R}^n)$ and all sequences $\{\lambda_j\}^\infty_{j=1}$ in $\ell^{\alpha}$. 
\end{theorem}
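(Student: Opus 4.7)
The plan is to derive Theorem \ref{adimissible} from Theorem \ref{M-S} via the duality principle (Lemma \ref{duality-principle}) combined with the composition law in Schatten classes, following the strategy of Frank--Sabin \cite{FS}. The point is that although $P$ does not preserve orthonormality, the Schatten-side reformulation of the estimate only requires $P$ to be bounded on $L_\kappa^2(\mathbb{R}^n)$, which it is, since $P$ is a Fourier--Dunkl multiplier with bounded symbol $\psi(|\cdot|)$.

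First, I would apply Lemma \ref{duality-principle} with $r=\alpha$ to rewrite Theorem \ref{M-S} in dual form: for $(\tfrac{1}{p},\tfrac{1}{q})\in[B,A)$ and $\alpha=\tfrac{2p}{p+1}$,
\begin{equation*}
\bigl\|W\, e^{it\Delta_\kappa}(e^{it\Delta_\kappa})^*\,\overline{W}\bigr\|_{\mathfrak{S}^{\alpha'}(L^2(\mathbb{R},L_\kappa^2(\mathbb{R}^n)))}\lesssim \|W\|_{L^{2q'}(\mathbb{R},L_\kappa^{2p'}(\mathbb{R}^n))}^2.
\end{equation*}
Using the identity $\|BB^*\|_{\mathfrak{S}^{\alpha'}}=\|B\|_{\mathfrak{S}^{2\alpha'}}^2$ with $B=We^{it\Delta_\kappa}$ (viewed as an operator from $L_\kappa^2(\mathbb{R}^n)$ to $L^2(\mathbb{R},L_\kappa^2(\mathbb{R}^n))$), this is equivalent to the factorised form
\begin{equation*}
\bigl\|W\,e^{it\Delta_\kappa}\bigr\|_{\mathfrak{S}^{2\alpha'}(L_\kappa^2(\mathbb{R}^n)\to L^2(\mathbb{R},L_\kappa^2(\mathbb{R}^n)))}\lesssim \|W\|_{L^{2q'}(\mathbb{R},L_\kappa^{2p'}(\mathbb{R}^n))}.
\end{equation*}

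Next, I would insert the localiser. Since $P$ is a Fourier--Dunkl multiplier with symbol $\psi(|\cdot|)$ bounded by $\|\psi\|_\infty$, the operator norm of $P$ on $L_\kappa^2(\mathbb{R}^n)$ is controlled by a constant depending only on $\psi$. Combining this with the ideal property of Schatten classes ($\|ST\|_{\mathfrak{S}^r}\leq \|S\|_{\mathfrak{S}^r}\|T\|_{\mathrm{op}}$) yields
\begin{equation*}
\bigl\|W\,e^{it\Delta_\kappa}P\bigr\|_{\mathfrak{S}^{2\alpha'}}\leq \bigl\|W\,e^{it\Delta_\kappa}\bigr\|_{\mathfrak{S}^{2\alpha'}}\cdot \|P\|_{L_\kappa^2\to L_\kappa^2}\lesssim_\psi \|W\|_{L^{2q'}(\mathbb{R},L_\kappa^{2p'}(\mathbb{R}^n))}.
\end{equation*}
Squaring and using $P^*=P$ together with the identity $\|BB^*\|_{\mathfrak{S}^{\alpha'}}=\|B\|_{\mathfrak{S}^{2\alpha'}}^2$ again, now with $B=We^{it\Delta_\kappa}P$, produces
\begin{equation*}
\bigl\|W\,e^{it\Delta_\kappa}P\,P^*(e^{it\Delta_\kappa})^*\,\overline{W}\bigr\|_{\mathfrak{S}^{\alpha'}}\lesssim_\psi \|W\|_{L^{2q'}(\mathbb{R},L_\kappa^{2p'}(\mathbb{R}^n))}^2.
\end{equation*}

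Finally, invoking Lemma \ref{duality-principle} in the other direction (with the operator $Af=e^{it\Delta_\kappa}Pf$) converts this Schatten bound back into the desired orthonormal Strichartz estimate, completing the proof. There is no genuine obstacle here: the whole point of the Frank--Sabin duality is that the orthonormality hypothesis on the inputs $\{f_j\}$ disappears on the Schatten side, so a bounded pre-composition by $P$ (which would destroy orthonormality at the input level) is harmless on the Schatten side. The constant in the final estimate depends on $\psi$ only through $\|\psi\|_\infty$.
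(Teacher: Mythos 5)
Your proposal is correct, but it takes a genuinely different route from the paper. The paper's proof first records the trivial bound at the point $B$ (the $L^\infty(\mathbb{R},L^1_\kappa)$--$\ell^1$ estimate via unitarity of $e^{it\Delta_\kappa}$ and $\|Pf_j\|_{L^2_\kappa}\le\|f_j\|_{L^2_\kappa}$), reduces by interpolation to the range $1+\frac{2}{N}\le p<\frac{N+1}{N-1}$, and then re-runs the Frank--Sabin machinery from scratch with the cutoff inserted: it passes to the Schatten bound \eqref{P-T_1} via Lemma \ref{duality-principle}, introduces the analytic family $\mathcal{T}_z$ with multiplier $\frac{1}{\Gamma(z+1)}(\tau-|\xi|^2)_+^z\psi^2(|\xi|)$, and interpolates between a Plancherel bound and a Hilbert--Schmidt bound obtained from the localized dispersive estimate \eqref{cut-off decay} and the Hardy--Littlewood--Sobolev inequality, exactly as in Theorem \ref{Lorentz-refined}. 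You instead deduce the result directly from Theorem \ref{M-S}: dualize, factor $\|WAA^*\overline{W}\|_{\mathfrak{S}^{\alpha'}}=\|WA\|^2_{\mathfrak{S}^{2\alpha'}}$, insert $P$ via the Schatten ideal property and $\|P\|_{L^2_\kappa\to L^2_\kappa}\le\|\psi\|_{L^\infty}$, and dualize back. This is shorter and makes transparent why the failure of $P$ to preserve orthonormality is irrelevant --- the paper itself gestures at exactly this reduction in the remark following Theorem \ref{frequency-localized} but does not execute it; what the paper's longer route buys is a self-contained template for the genuinely difficult segment $[O,A)$, where no pre-existing global estimate is available and the kernel analysis with the cutoff cannot be avoided. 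Two small points to tidy: (i) to recover the Lebesgue-space form of Theorem \ref{M-S} from Lemma \ref{duality-principle} the Lorentz second index should be taken equal to $q$, not $\alpha$ (your displayed Schatten bound is nonetheless the correct one); (ii) you should check the hypothesis of Lemma \ref{duality-principle} for $A=e^{it\Delta_\kappa}P$, which follows from Theorem \ref{Strichartz-single} together with the $L^2_\kappa$-boundedness of $P$, and note that $P^*=P$ requires $\psi$ real-valued (an assumption the paper also uses implicitly when writing $P^2$ for $PP^*$).
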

\begin{proof}
    From the fact that $e^{it\Delta_\kappa}$ is unitary on $L_\kappa^2(\mathbb{R}^n)$ and $\|Pf_j\|_{L_\kappa^2(\mathbb{R}^n)}\leq \|f_j\|_{L_\kappa^2(\mathbb{R}^n)}$, it immediately follows that
    \begin{equation*}
			\bigg\|\sum_{j=1}^\infty \lambda_j|e^{it\Delta_\kappa}Pf_j|^2\bigg\|_{L^\infty(\mathbb{R}, L_\kappa^1(\mathbb{R}^n))}\leq \sum_{j=1}^\infty |\lambda_j|\|e^{it\Delta_\kappa}Pf_j\|^2_{L^\infty(\mathbb{R}, L_\kappa^2(\mathbb{R}^n))}\leq \|\{\lambda_j\}^\infty_{j=1}\|_{\ell^1}. 
\end{equation*}	
By interpolation, it suffices to prove that 
\begin{equation*}
			\bigg\|\sum_{j=1}^\infty \lambda_j|e^{it\Delta_\kappa}Pf_j|^2\bigg\|_{L^q(\mathbb{R}, L_\kappa^p(\mathbb{R}^n))}\lesssim \|\{\lambda_j\}^\infty_{j=1}\|_{\ell^{\alpha}} 
\end{equation*}	
holds for $1+\frac{2}{N}\leq p<\frac{N+1}{N-1}$. By duality principle  given in Theorem \ref{duality-principle}, it is equivalent to  prove
\begin{equation}\label{P-T_1}
	\big\|We^{it\Delta_\kappa}(e^{it\Delta_\kappa})^*P^2\overline{W}\big\|_{\mathfrak{S}^{\alpha^\prime}(L^2(\mathbb{R}, L_\kappa^2(\mathbb{R}^n)))}\lesssim_\psi\big\|W\big\|^2_{L^{2q^\prime}(\mathbb{R}, L_\kappa^{\alpha^\prime}(\mathbb{R}^n))}
		\end{equation}
  for all $W\in L^{2q^\prime}(\mathbb{R}, L_\kappa^{\alpha^\prime}(\mathbb{R}^n))$, where the operator $e^{it\Delta_\kappa}(e^{it\Delta_\kappa})^*P^2$ is given by
  \begin{equation*}
      e^{it\Delta_\kappa}(e^{it\Delta_\kappa})^*P^2g(x)=\int_\mathbb{R} e^{i(t-s)\Delta_\kappa} P^2g(\cdot,s)(x)ds.
  \end{equation*}
 Similarly to Theorem \ref{Lorentz-refined},  consider the analytic family of operators $\mathcal{T}_z$ defined by
\begin{equation*}
    \mathcal{F}_{t,\kappa}(\mathcal{T}_zg)(\xi,\tau)=\frac{1}{\Gamma(z+1)}(\tau-|\xi|^2)_+^z\psi^2(|\xi|)\mathcal{F}_{t,\kappa}g(\xi,\tau)
\end{equation*}
for $(\xi,\tau)\in\mathbb{R}^n\times\mathbb{R}$. Then the kernel of $\mathcal{T}_z$ is given by
\begin{align*}
    &\quad \tau_x\left(\frac{1}{c_\kappa\Gamma(z+1)}\int_{\mathbb{R}}\tau_+^ze^{i\tau (t-s)}d\tau\int_{\mathbb{R}^n} e^{i(t-s)|\xi|^2} \psi^2(|\xi|)K(-y,i \xi)h_\kappa(\xi)d\xi\right)\\
    &=ie^{iz \pi/2}(t-s+i0)^{-z-1}\tau_x\mathcal{F}^{-1}_\kappa \left(e^{i(t-s)|\cdot|^2} \psi^2(|\cdot|)\right)(-y)
\end{align*}
and by \eqref{cut-off decay}, the kernel of $W_1\mathcal{T}_z W_2$ is also bounded by
$$C_\psi(Im(z))|W_1(x,t)||t-s|^{-Re(z)-1-\frac{N}{2}}|W_2(y,s)|.$$
Then, proceeding similarly as in  Theorem \ref{Lorentz-refined}, we have our required estimate \eqref{P-T_1}. 
\end{proof}

Now we are ready to prove Theorem \ref{frequency-localized}.
\begin{proof}[Proof of Theorem \ref{frequency-localized}]
In order to obtain our desired results, we begin with the elementary claim that
\begin{align}
			\bigg\|\sum_{j=1}^\infty \lambda_j|e^{it\Delta_\kappa}Pf_j|^2\bigg\|_{L^\infty(\mathbb{R}, L_\kappa^\infty(\mathbb{R}^n))}&\lesssim_\psi \|\{\lambda_j\}^\infty_{j=1}\|_{\ell^\infty},\label{infty}\\
   \bigg\|\sum_{j=1}^\infty \lambda_j|e^{it\Delta_\kappa}Pf_j|^2\bigg\|_{L^1(\mathbb{R}, L_\kappa^p(\mathbb{R}^n))}&\lesssim_\psi \|\{\lambda_j\}^\infty_{j=1}\|_{\ell^1},\text{ for any } \frac{N}{N-2}\leq p\leq\infty.\label{q=1}
\end{align}	
Indeed, for fixed  $(x,t)\in\mathbb{R}^n\times\mathbb{R}$, we define $\Psi_{x,t}(\xi)=K(ix,\xi)e^{-it|\xi|^2}\psi(|\xi|)$. Then from the orthonormality of $\{f_j\}_{j=1}^\infty$ in $L_\kappa^2(\mathbb{R}^n)$ and Bessel's inequality, it follows that 
\begin{align*}
  \sum_{j=1}^\infty |\lambda_j||e^{it\Delta_\kappa}Pf_j(x)|^2&=\frac{1}{c^2_\kappa} \sum_{j=1}^\infty |\lambda_j|\left|\int_{\mathbb{R}^n}e^{-it|\xi|^2}\psi(|\xi|)\mathcal{F}_\kappa f_j(\xi)K(ix,\xi)h_\kappa(\xi)d\xi\right|^2\\
  &=\frac{1}{c^2_\kappa} \sum_{j=1}^\infty |\lambda_j| \left|\int_{\mathbb{R}^n}\Psi_{x,t}(\xi) \mathcal{F}_\kappa f_j(\xi)h_\kappa(\xi)d\xi\right|^2\\
  &=\frac{1}{c^2_\kappa} \sum_{j=1}^\infty |\lambda_j| \left|\int_{\mathbb{R}^n}\mathcal{F}_\kappa \Psi_{x,t}(\xi)f_j(\xi)h_\kappa(\xi)d\xi\right|^2\\
  &\leq \frac{1}{c^2_\kappa} \|\mathcal{F}_\kappa \Psi_{x,t}\|^2_{L_\kappa^2(\mathbb{R}^n)}\|\{\lambda_j\}^\infty_{j=1}\|_{\ell^\infty} \\
  &=\frac{1}{c^2_\kappa} \|\Psi_{x,t}\|^2_{L_\kappa^2(\mathbb{R}^n)}\|\{\lambda_j\}^\infty_{j=1}\|_{\ell^\infty} \\
  &\leq \frac{1}{c^2_\kappa} \|\psi(|\cdot|)\|^2_{L_\kappa^2(\mathbb{R}^n)}\|\{\lambda_j\}^\infty_{j=1}\|_{\ell^\infty}\\
  &\lesssim_\psi \|\{\lambda_j\}^\infty_{j=1}\|_{\ell^\infty},
\end{align*}
which essentially the estimate \eqref{infty}. Next, we notice that, we can  rewrite $e^{it\Delta_\kappa}Pf_j$ as 
\begin{equation*}
    e^{it\Delta_\kappa}Pf_j=\mathcal{F}^{-1}_\kappa (\psi(|\cdot|))*_\kappa \left(e^{it\Delta_\kappa}f_j\right).
\end{equation*}
By Young's inequality \eqref{Young} and Theorem \ref{Strichartz-single},   for any $\frac{N}{N-2}\leq p\leq\infty$, we have
\begin{equation*}
\|e^{it\Delta_\kappa}Pf_j\|_{L^{2}(\mathbb{R},L_\kappa^{2p}(\mathbb{R}^n))}\leq \|\mathcal{F}^{-1}_\kappa (\psi(|\cdot|))\|_{L_\kappa^r(\mathbb{R}^n)} \|e^{it\Delta_\kappa}f_j\|_{L^{2}(\mathbb{R},L_\kappa^{\frac{2N}{N-2}}(\mathbb{R}^n))}\lesssim_\psi \|f_j\|_{L_\kappa^2(\mathbb{R}^n)},
\end{equation*}
where $1\leq r\leq\infty$ satisfies $1+\frac{1}{2p}=\frac{1}{r}+\frac{N-2}{2N}$. Now using the Minkowski's inequality, we get 
\begin{align*}
   \bigg\|\sum_{j=1}^\infty \lambda_j|e^{it\Delta_\kappa}Pf_j|^2\bigg\|_{L^1(\mathbb{R}, L_\kappa^p(\mathbb{R}^n))}&\leq    \sum_{j=1}^\infty |\lambda_j| \|e^{it\Delta_\kappa}Pf_j\|_{L^{2}(\mathbb{R},L_\kappa^{2p}(\mathbb{R}^n))}^2\lesssim_\psi \|\{\lambda_j\}^\infty_{j=1}\|_{\ell^\infty}. 
\end{align*}
and this completes the inequality \eqref{q=1}.

In light of Theorem \ref{adimissible}, \eqref{infty} and \eqref{q=1}, by complex interpolation, in order to prove Theorem \ref{frequency-localized}, it suffices to deduce that for any $(\frac{1}{p},\frac{1}{q})\in [E,A)$, i.e., $\frac{N+1}{N}<q\leq 2$  with $\frac{1}{q}=\frac{N}{(N-1)p}$, the inequality
\begin{equation}\label{goal}
			\bigg\|\sum_{j=1}^\infty \lambda_j|e^{it\Delta_\kappa}Pf_j|^2\bigg\|_{L^q(\mathbb{R}, L_\kappa^p(\mathbb{R}^n))}\lesssim_\psi \|\{\lambda_j\}^\infty_{j=1}\|_{\ell^q}, 
\end{equation}	
holds for all families of orthonormal functions $\{f_j\}_{j=1}^\infty$ in $L_\kappa^2(\mathbb{R}^n)$ and all sequences $\{\lambda_j\}^\infty_{j=1}$ in $\ell^q$. Using the duality principle Lemma \ref{duality-principle},  the inequality \eqref{goal} is equivalent to
\begin{equation}\label{new-goal}
	\big\|W_1e^{it\Delta_\kappa}(e^{it\Delta_\kappa})^*P^2W_2\big\|_{\mathfrak{S}^{q^\prime}(L^2(\mathbb{R}, L_\kappa^2(\mathbb{R}^n)))}\lesssim_\psi \big\|W_1\big\|_{L^{2q^\prime}(\mathbb{R}, L_\kappa^{2p^\prime}(\mathbb{R}^n))} \big\|W_2\big\|_{L^{2q^\prime}(\mathbb{R}, L_\kappa^{2p^\prime}(\mathbb{R}^n))},
		\end{equation}
  for all $W_1,W_2\in L^{2q^\prime}(\mathbb{R}, L_\kappa^{2p^\prime}(\mathbb{R}^n))$.
To establish this, first decompose the operator $e^{it\Delta_\kappa}(e^{it\Delta_\kappa})^*P^2$ dyadically as  $e^{it\Delta_\kappa}(e^{it\Delta_\kappa})^*P^2=\sum\limits_{j\in\mathbb{R}}T_j$ with
\begin{equation*}
    T_jg(x,t)=\int_\mathbb{R} \chi\left(2^{-j}(t-s)\right)e^{i(t-s)\Delta_\kappa}P^2g(\cdot,s)(x)ds,
\end{equation*}
where $\chi\in C^\infty_c(\mathbb{R})$ is a suitable function supported in $[\frac{1}{2},2]$ which satisfies $\sum\limits_{j\in\mathbb{Z}}\chi\left(2^{-j}t\right)=1$ for any $t\neq0$ and $\widehat{\chi}(0)=0$. It is easy to justify the existence of the function $\chi$ and 
\begin{equation}\label{chi}
  |\widehat{\chi}(\tau)|\lesssim\min\{|\tau|,|\tau|^{-1}\}.  
\end{equation}
Besides, we denote 
\begin{equation*}
\beta(r,s)=\frac{N+1}{2}-N\left(\frac{1}{r}+\frac{1}{s}\right).
\end{equation*}
In order to prove \eqref{new-goal}, we first estimate the Schatten bounds for a family of analytic operators $\sum\limits_{j\in\mathbb{Z}} W_12^{jz}T_j W_2$. Using the complex interpolation in Lemma \ref{interpolation schatten} between the two estimates in the following lemma, we  can obtain  our required estimate \eqref{new-goal}.               \end{proof}

\begin{lemma}For $z\in \mathbb{C}$, we have the following estimates
\begin{equation}\label{lemma-1}
	\big\|\sum\limits_{j\in\mathbb{Z}} 2^{jz}W_1T_jW_2\big\|_{\mathfrak{S}^\infty(L^2(\mathbb{R}, L_\kappa^2(\mathbb{R}^n)))}\lesssim_\psi \big\|W_1\big\|_{L^\infty(\mathbb{R}, L_\kappa^\infty(\mathbb{R}^n))} \big\|W_2\big\|_{L^\infty(\mathbb{R}, L_\kappa^\infty(\mathbb{R}^n))}
		\end{equation}
  for $Re(z)=-1$ and
    \begin{equation}\label{lemma-2}
	\big\|\sum\limits_{j\in\mathbb{Z}} 2^{jz}W_1T_jW_2\big\|_{\mathfrak{S}^{2}(L^2(\mathbb{R}, L_\kappa^2(\mathbb{R}^n)))}\lesssim_\psi \big\|W_1\big\|_{L^{4}(\mathbb{R}, L_\kappa^{r}(\mathbb{R}^n))} \big\|W_2\big\|_{L^{4}(\mathbb{R}, L_\kappa^{r}(\mathbb{R}^n))}
		\end{equation}
  for $Re(z)=-\beta(r,r)$ and $r\in (2,4)$.
\end{lemma}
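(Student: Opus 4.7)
I would prove the two Schatten estimates by distinct arguments: \eqref{lemma-1} via a time-Fourier multiplier analysis, and \eqref{lemma-2} via the Hilbert-Schmidt integral formula combined with Lemma \ref{new inequality} and a small/large splitting of $|t-s|$.

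\textbf{For \eqref{lemma-1}.} Setting $m_z(\sigma):=\sum_j 2^{jz}\chi(2^{-j}\sigma)$, one has
\[
\sum_{j\in\mathbb{Z}}2^{jz}T_j g(x,t)=\int_{\mathbb{R}} m_z(t-s)\,e^{i(t-s)\Delta_\kappa}P^2 g(\cdot,s)(x)\,ds,
\]
so after applying $\mathcal{F}_{t,\kappa}$ the operator becomes multiplication by the symbol $\psi^2(|\xi|)\widehat{m_z}(\tau+|\xi|^2)$. A direct computation gives $\widehat{m_z}(\tau)=\sum_j 2^{j(z+1)}\widehat{\chi}(2^j\tau)$, and for $\operatorname{Re}(z)=-1$ every coefficient has modulus one. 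Splitting the dyadic sum at $2^j|\tau|=1$ and invoking $|\widehat\chi(\tau)|\lesssim\min(|\tau|,|\tau|^{-1})$ from \eqref{chi} together with Lemma \ref{Sum} will give $|\widehat{m_z}(\tau)|\lesssim 1$ uniformly in $\tau$. Plancherel then yields the boundedness of the operator on $L^2(\mathbb{R},L^2_\kappa(\mathbb{R}^n))$, and since $\|\cdot\|_{\mathfrak{S}^\infty}$ equals the operator norm, conjugation by $W_1,W_2\in L^\infty$ finishes \eqref{lemma-1}.

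\textbf{For \eqref{lemma-2}.} Since $\|\cdot\|_{\mathfrak{S}^2}^2$ equals the squared $L^2$-norm of the operator kernel, the estimate reduces to bounding
\[
\iint\iint|W_1(x,t)|^2|m_z(t-s)|^2|L(x,y,t-s)|^2|W_2(y,s)|^2 h_\kappa(x)h_\kappa(y)\,dx\,dy\,dt\,ds,
\]
where $L(x,y,\sigma):=\tau_x\mathcal{F}_\kappa^{-1}(e^{i\sigma|\cdot|^2}\psi^2(|\cdot|))(-y)$. The dyadic support of $\chi$ yields $|m_z(\sigma)|\lesssim|\sigma|^{-\beta(r,r)}$ on $\operatorname{Re}(z)=-\beta(r,r)$, and interpolating the spatial decay $|L|\lesssim_M(1+\big||x|-|y|\big|)^{-M}$ from the Remark after \eqref{cut-off decay} with the dispersive bound \eqref{cut-off decay} produces
\[
|L(x,y,\sigma)|^2\lesssim |\sigma|^{-N\theta}(1+\big||x|-|y|\big|)^{-2M(1-\theta)}\qquad(\theta\in[0,1]).
\]
I would split the time integral at $|t-s|=1$, taking $\theta=0$ on $\{|t-s|\le 1\}$ and $\theta\in((1-2\beta(r,r))/N,1)$ on $\{|t-s|>1\}$; the latter range is non-empty because $r>2$. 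In each region, Lemma \ref{new inequality} (applied with $M$ large enough that $(1+|\cdot|)^{-2M(1-\theta)}\in L^{p_1}_\kappa$ for $1/p_1=2-4/r$, which lies in $(0,1)$ since $r\in(2,4)$) will bound the inner spatial integral by $C\|W_1(\cdot,t)\|_{L^r_\kappa}^2\|W_2(\cdot,s)\|_{L^r_\kappa}^2$. The remaining temporal integral takes the form $\iint h(t-s)\|W_1(t)\|_{L^r_\kappa}^2\|W_2(s)\|_{L^r_\kappa}^2\,dt\,ds$ with $h\in L^1(\mathbb{R})$: integrable near $0$ because $r<4$ forces $2\beta(r,r)<1$, and integrable near $\infty$ by the choice of $\theta$ (which makes $2\beta(r,r)+N\theta>1$). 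Since $W_i\in L^4_tL^r_\kappa$ places $\|W_i\|_{L^r_\kappa}^2$ in $L^2_t$, Young's convolution inequality followed by Cauchy-Schwarz will deliver the desired bound $\lesssim\|W_1\|_{L^4_tL^r}^2\|W_2\|_{L^4_tL^r}^2$.

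\textbf{Main obstacle.} The hard part will be securing bidirectional $L^1_t$-integrability of the temporal kernel. Pure dispersive decay alone produces an endpoint Hardy-Littlewood-Sobolev singularity that the Lebesgue scale of $L^4_tL^r_\kappa$ cannot absorb, while pure spatial decay leaves a growing factor $|t-s|^{-2\beta(r,r)}$ whenever $\beta(r,r)<0$. Only the combined interpolated bound, used with $\theta=0$ on small scales and a suitable $\theta>0$ on large scales, controls both ends; the restriction $r\in(2,4)$, which forces $2\beta(r,r)\in(1-N,1)$, is precisely what makes this splitting feasible. Balancing $M$ and $\theta$ against the hypotheses of Lemma \ref{new inequality} while meeting both integrability thresholds is the most subtle part of the argument.
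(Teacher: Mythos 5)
Your treatment of \eqref{lemma-1} is correct and is essentially the paper's argument: factor out $\|W_1\|_{L^\infty}\|W_2\|_{L^\infty}$, observe that $\sum_j 2^{jz}T_j$ is the Fourier multiplier $\psi^2(|\xi|)\sum_j 2^{j(z+1)}\widehat{\chi}(2^j(\tau+|\xi|^2))$, and use \eqref{chi} with Lemma \ref{Sum} plus Plancherel.

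For \eqref{lemma-2}, however, there is a genuine gap: the pointwise bound $|L(x,y,\sigma)|\lesssim_M(1+\big||x|-|y|\big|)^{-M}$ is \emph{not} uniform in $\sigma$, and the Remark containing \eqref{cut-off decay} asserts no such thing. The rapid spatial decay comes from non-stationary phase applied to $\int e^{i(\pm r\rho-\sigma r^2)}\psi^2(r)H(r\rho)r^{N-1}dr$, and the phase is non-stationary only when $\rho=\big||x|-|y|\big|$ dominates $|\sigma|$ (the paper proves \eqref{claim} only for $|t-s|\sim 2^j$ and $|x|\geq 2^{j+4}$). Near the ``light cone'' $\big||x|-|y|\big|\sim 2|\sigma|$ the phase is stationary and the kernel is genuinely of size $\sim|\sigma|^{-N/2}$, so your interpolated bound $|L|^2\lesssim|\sigma|^{-N\theta}(1+\big||x|-|y|\big|)^{-2M(1-\theta)}$ is false there for any $\theta<1$ and large $M$ (it would force $|L|^2\ll|\sigma|^{-N}$ on the cone). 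Consequently your large-time analysis collapses: on $\{|t-s|>1\}$ the only valid pointwise input near the cone is the dispersive bound, the spatial integral over $\{\big||x|-|y|\big|\lesssim|t-s|\}$ via Lemma \ref{new inequality} contributes $|t-s|^{N(2-4/r)}$, and since $2\beta(r,r)=N+1-4N/r$ the temporal kernel is exactly $|t-s|^{-1}$ — the endpoint singularity you correctly identify as the obstacle but do not actually remove. The paper's way around this is structural, not a pointwise interpolation: each $T_j$ is split as $T_j^{(0)}+T_j^{(1)}$ according to whether $\big||x|-|y|\big|>2^{j+4}$ or not. The far piece $T_j^{(0)}$ lives where non-stationary phase legitimately applies, yielding a factor $(1+2^j)^{-M/2}$ that makes the $j$-sum converge absolutely. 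The near piece $T_j^{(1)}$ is reduced by Cauchy--Schwarz to the bilinear forms $T_{j,r}(|W_1|^2,|W_2|^2)$, which obey off-diagonal bounds $2^{\alpha(a,b)j}$ of both signs for nearby exponents $(a,b)$, and the $\ell^1$ summability in $j$ at the diagonal exponent is recovered by the Keel--Tao bilinear \emph{real} interpolation (Theorem \ref{bilinear-argument} with \eqref{L-qp} and \eqref{l-infty-s}). Some device of this kind is indispensable here; your small-time/large-time splitting with $\theta=0$ versus $\theta>0$ cannot substitute for it.
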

\begin{proof}
First we consider the case $Re(z)=-1$.  Since
\begin{align*}
	&\big\|\sum\limits_{j\in\mathbb{Z}} 2^{jz}W_1T_jW_2\big\|_{\mathfrak{S}^\infty(L^2(\mathbb{R}, L_\kappa^2(\mathbb{R}^n)))}\\
 \leq &\big\|W_1\big\|_{L^\infty(\mathbb{R}, L_\kappa^\infty(\mathbb{R}^n))} \|\sum\limits_{j\in\mathbb{Z}} 2^{jz}T_j\|_{L^2(\mathbb{R}, L_\kappa^2(\mathbb{R}^n))\rightarrow L^2(\mathbb{R}, L_\kappa^2(\mathbb{R}^n))}\big\|W_2\big\|_{L^\infty(\mathbb{R}, L_\kappa^\infty(\mathbb{R}^n))},
		\end{align*}
it suffices to prove
\begin{equation}\label{2-2}
   \|\sum\limits_{j\in\mathbb{Z}} 2^{jz}T_jg\|_{L^2(\mathbb{R}, L_\kappa^2(\mathbb{R}^n))}\lesssim_\psi \|g\|_{L^2(\mathbb{R}, L_\kappa^2(\mathbb{R}^n))}.
\end{equation}
In fact, we have
\begin{equation*}
     \mathcal{F}_{t,\kappa}(T_jg)(\xi,\tau)=2^j\widehat{\chi}\left(2^j(\tau+|\xi|^2)\right)\psi^2(|\xi|)\mathcal{F}_{t,\kappa}g(\xi,\tau)
\end{equation*}
From Lemma \ref{Sum} and \eqref{chi}, it deduces that
\begin{align*}
    \sum\limits_{j\in\mathbb{Z}} \left|\widehat{\chi}\left(2^j(\tau+|\xi|^2)\right)\right|&=\left(\sum\limits_{2^j(\tau+|\xi|^2)>1} +\sum\limits_{2^j(\tau+|\xi|^2)\leq 1}\right)\left|\widehat{\chi}\left(2^j(\tau+|\xi|^2)\right)\right|\\
    &\lesssim \sum\limits_{2^j(\tau+|\xi|^2)>1} \left(2^j(\tau+|\xi|^2)\right)^{-1}+\sum\limits_{2^j(\tau+|\xi|^2)\leq 1} 2^j(\tau+|\xi|^2)\\
    &\lesssim 1
\end{align*}
holds uniformly in $(\xi,\tau)\in\mathbb{R}^n\times\mathbb{R}$ and \eqref{2-2} follows from Plancherel's formula when $Re(z)=-1$.

Next we turn to the case $Re(z)=-\beta(r,r)$. For this, we further break up each operator $T_j$ into two parts, i.e., $T_j=T^{(0)}_j+T^{(1)}_j$, where
\begin{align*}
    T^{(0)}_jg(x,t)&=\int_\mathbb{R} \chi\left(2^{-j}(t-s)\right)e^{i(t-s)\Delta_\kappa}P^2\left(E_{I(|x|,2^{j+4})^c}(|\cdot|) g(\cdot,s)\right)(x)ds,\\
    T^{(1)}_jg(x,t)&=\int_\mathbb{R} \chi\left(2^{-j}(t-s)\right)e^{i(t-s)\Delta_\kappa}P^2\left(E_{I(|x|,2^{j+4})}(|\cdot|) g(\cdot,s)\right)(x)ds,
\end{align*}
with  $I(|x|,2^{j+4})$ is the interval centered at $|x|$ with radius $2^{j+4}$ and $E_{I(|x|,2^{j+4})}$ is the characteristic function on $I(|x|,2^{j+4})$.

We begin with the part $T^{(0)}_j$, which also could be considered as an error term of $T_j$. We  can rewrite $T^{(0)}_jg(x,t)$ as 
\begin{equation*}
T^{(0)}_jg(x,t)=\int_\mathbb{R}\int_{\mathbb{R}^n}K^{(0)}_j(x,t,y,s) g(y,s)h_\kappa(y)dyds,
\end{equation*}
where the kernel of $W_1T^{(0)}_j W_2$ is given by
\begin{equation}\label{KernelWTW}
    K^{(0)}_j(x,t,y,s)=W_1(x,t)E_{I(0,2^{j+4})^c}(|x|-|y|)\chi\left(2^{-j}(t-s)\right)\tau_xe^{i(t-s)\Delta_\kappa} \Psi(-y)W_2(y,s)
\end{equation}
with $\mathcal{F}_\kappa \Psi(\xi)=\psi^2(|\xi|)$.

We claim that for any $t\in [2^{j-1},2^{j+1}]$, $x\in B(0,2^{j+4})^c$ and $M\in\mathbb{N},$ there exists a constant $C_{\psi,M}$ depending only on $\psi$ and $M$ such that
\begin{equation}\label{claim}
    \left|e^{it\Delta_\kappa} \Psi(x)\right|\leq  \frac{C_{\psi,M}}{(1+|x|)^M}.
\end{equation}
Indeed, from \eqref{radial-Dunkl} and \eqref{Bessel-Fourier}, we see that
\begin{align*}
    e^{it\Delta_\kappa} \Psi(x)&=\frac{1}{\Gamma(N/2)}\int_0^\infty e^{-itr^2}\psi^2(r)\frac{J_\frac{N-2}{2}(r|x|)}{\left(r|x|\right)^\frac{N-2}{2}}r^{N-1} dr\\
    &= C \int_0^\infty \left(e^{i(r|x|-tr^2)}h(r|x|)+e^{i(-r|x|-tr^2)}\overline{h(r|x|)}\right)\psi^2(r)r^{N-1} dr.
\end{align*}
From \eqref{small-s}, our claim holds true for any $|x|\leq 1$.  On the other hand, when $|x|>1$, it is enough to show the following oscillatory integral satisfies
\begin{equation}\label{claim-equiv}
\left|\int_0^\infty e^{i\left(\pm r|x|-tr^2\right)}\psi^2(r)H(r|x|)r^{N-1}dr\right|\leq  \frac{C_{\psi,M}}{|x|^M},
\end{equation}
where $H=h$ or $\overline{h}$. Note that $r\in[\frac{1}{2},2]$, $t\in [2^{j-1},2^{j+1}]$ and $x\in B(0,2^{j+4})^c$, we have 
\begin{equation*}
    \left|\frac{d}{dr}\left(\pm r|x|-tr^2\right)\right|=\left|\pm |x|-2tr\right|\geq |x|-4|t|\geq \frac{|x|}{2}
\end{equation*}
and hence, the estimate  \eqref{claim-equiv} follows immediately by repeated integration by parts and \eqref{big-s}.


Once we show \eqref{claim}, noting that $e^{it\Delta_\kappa} \Psi$ is radial, from \eqref{239} and \eqref{translator}, we have
\begin{equation*}
\left|\tau_xe^{i(t-s)\Delta_\kappa} \Psi(-y)\right|\leq \frac{C_{\psi,M}}{\left(1+\big||x|-|y|\big|\right)^M}.
\end{equation*}
Then from \eqref{KernelWTW}, we get 
\begin{align}
    |K_j^{(0)}(x,t,y,s)|&\leq C_{\psi,M}\frac{|W_1(x,t)||W_2(y,s)||\chi(2^{-j}(t-s))|}{\left(1+\big||x|-|y|\big|\right)^M}\nonumber\\
    &\leq C_{\psi,M}\frac{|W_1(x,t)||W_2(y,s)||\chi(2^{-j}(t-s))|}{(1+2^j)^\frac{M}{2}\left(1+\big||x|-|y|\big|\right)^\frac{M}{2}.\label{kernel-estimate}}
\end{align}
Therefore, for $Re(z)=-\beta(r,r), r\in (2,4)$ and $M\in\mathbb{N}$ sufficiently large, it follows from \eqref{kernel-estimate} and Lemma \ref{new inequality} that
\begin{align*}
    &\big\|\sum\limits_{j\in\mathbb{Z}} 2^{jz}W_1T^{(0)}_jW_2\big\|_{\mathfrak{S}^{2}(L^2(\mathbb{R}, L_\kappa^2(\mathbb{R}^n)))}\\
 &\leq \sum\limits_{j\in\mathbb{Z}} 2^{-j\beta(r,r)} \big\|W_1T_jW_2\big\|_{\mathfrak{S}^{2}(L^2(\mathbb{R}, L_\kappa^2(\mathbb{R}^n)))}\\
    &=\sum\limits_{j\in\mathbb{Z}} 2^{-j\beta(r,r)}\left(\int_{\mathbb{R}^2}\int_{\mathbb{R}^n}|K_j^{(0)}(x,t,y,s)|^2h_\kappa(x)h_\kappa(y)dxdydtds\right)^\frac{1}{2}\\
    &\leq C_{\psi,M}\sum\limits_{j\in\mathbb{Z}} \frac{2^{-j\beta(r,r)}}{(1+2^j)^M}\left(\int_{\mathbb{R}^2}\int_{\mathbb{R}^{2n}}\frac{|W_1(x,t)|^2|W_2(y,s)|^2|\chi(2^{-j}(t-s))|^2}{\left(1+\big||x|-|y|\big|\right)^{M}}h_\kappa(x)h_\kappa(y)dxdydtds\right)^\frac{1}{2}\\
    &\leq C_{\psi,M}\sum\limits_{j\in\mathbb{Z}} \frac{2^{-j\beta(r,r)}}{(1+2^j)^M}\left\|\frac{1}{(1+|\cdot|)^M}\right\|^\frac{1}{2}_{L_\kappa^\frac{r}{2(r-2)}(\mathbb{R}^n)}\left(\int_{\mathbb{R}^2} \|W_1(\cdot,t)\|^2_{L_\kappa^r(\mathbb{R}^n)}\|W_2(\cdot,s)\|^2_{L_\kappa^r(\mathbb{R}^n)}|\chi(2^{-j}(t-s))|^2dtds\right)^\frac{1}{2}\\
    &\leq C_{\psi,M}\sum\limits_{j\in\mathbb{Z}} \frac{2^{-j\beta(r,r)}}{(1+2^j)^M} \|\chi(2^{-j}\cdot)\|_{L^2(\mathbb{R})}\|W_1\|_{L^4(\mathbb{R},L^r_\kappa(\mathbb{R}^n))}\|W_2\|_{L^4(\mathbb{R},L^r_\kappa(\mathbb{R}^n))}\\
    &\leq C_{\psi,M}\sum\limits_{j\in\mathbb{Z}} \frac{2^{j(\frac{1}{2}-\beta(r,r))}}{(1+2^j)^M} \|W_1\|_{L^4(\mathbb{R},L^r_\kappa(\mathbb{R}^n))}\|W_2\|_{L^4(\mathbb{R},L^r_\kappa(\mathbb{R}^n))}.
\end{align*}

Note that for $r\in (2,4)$, $\frac{1}{2}-\beta(r,r)\in (0,\frac{N}{2})$ and hence we deduce the desired estimate
\begin{equation*}
    \big\|\sum\limits_{j\in\mathbb{Z}} 2^{jz}W_1T^{(0)}_jW_2\big\|_{\mathfrak{S}^{2}(L^2(\mathbb{R}, L_\kappa^2(\mathbb{R}^n)))}\lesssim_\psi \|W_1\|_{L^4(\mathbb{R},L^r_\kappa(\mathbb{R}^n))}\|W_2\|_{L^4(\mathbb{R},L^r_\kappa(\mathbb{R}^n))}.
\end{equation*}
As to the leading term $T_j^{(1)}$ of $T_j$, we rewrite 
\begin{equation*}
T^{(1)}_jg(x,t)=\int_\mathbb{R}\int_{\mathbb{R}^n}K^{(1)}_j(x,t,y,s) g(y,s)h_\kappa(y)dyds,
\end{equation*}
where the kernel of $W_1T^{(1)}_j W_2$ is given by
\begin{equation*}
    K^{(1)}_j(x,t,y,s)=W_1(x,t)E_{I(0,2^{j+4})}(|x|-|y|)\chi\left(2^{-j}(t-s)\right)\tau_x e^{i(t-s)\Delta_\kappa} \Psi(-y)W_2(y,s).
\end{equation*}
By using Cauchy-Schwarz inequality, it implies
\begin{align*}
    &\big\|\sum\limits_{j\in\mathbb{Z}} 2^{jz}W_1T^{(1)}_jW_2\big\|^2_{\mathfrak{S}^{2}(L^2(\mathbb{R}, L_\kappa^2(\mathbb{R}^n)))}\\
&=\int_{\mathbb{R}^2}\int_{\mathbb{R}^{2n}}\left|\sum\limits_{j\in\mathbb{Z}} 2^{jz}K^{(1)}_j(x,t,y,s)\right|^2h_\kappa(x) h_\kappa(y)dxdydtds\\
&=\int_{\mathbb{R}^2}\int_{\mathbb{R}^{2n}}\left|\sum\limits_{j\in\mathbb{Z}} 2^{jz}\chi\left(2^{-j}(t-s)\right)E_{I(0,2^{j+4})}(|x|-|y|)\right|^2\\
&\qquad\qquad\qquad\times \left|W_1(x,t)\tau_x e^{i(t-s)\Delta_\kappa} \Psi(-y)W_2(y,s)\right|^2h_\kappa(x) h_\kappa(y)dxdydtds\\
&\leq \int_{\mathbb{R}^2}\int_{\left||x|-|y|\right|\leq 2^{j+4}}\left(\sum\limits_{j\in\mathbb{Z}} 
|\chi\left(2^{-j}(t-s)\right)|\right)\sum\limits_{j\in\mathbb{Z}} 2^{-2\beta(r,r)j}|\chi\left(2^{-j}(t-s)\right)|\\
&\qquad\qquad\qquad\times \left|\tau_xe^{i(t-s)\Delta_\kappa} \Psi(-y)|^2|W_1(x,t)\right|^2|W_2(y,s)|^2h_\kappa(x) h_\kappa(y)dxdydtds\\
&\lesssim \sum\limits_{j\in\mathbb{Z}} 2^{-2\beta(r,r)j}\int_{\mathbb{R}^2}\int_{\left||x|-|y|\right|\leq 2^{j+4}}|\chi\left(2^{-j}(t-s)\right)|\left|\tau_x e^{i(t-s)\Delta_\kappa} \Psi(-y)\right|^2\\
&\qquad\qquad\qquad\times|W_1(x,t)|^2|W_2(y,s)|^2h_\kappa(x) h_\kappa(y)dxdydtds\\
&=\sum\limits_{j\in\mathbb{Z}} \left|T_{j,r}(|W_1|^2,|W_2|^2)\right|,
\end{align*}
where $T_{j,r}$ is the bilinear operator defined by
\begin{align*}
    T_{j,r}(V_1,V_2)&=2^{-2\beta(r,r)j}\int_{\mathbb{R}^2}\int_{\left||x|-|y|\right|\leq 2^{j+4}}|\chi\left(2^{-j}(t-s)\right)|\left|\tau_x e^{i(t-s)\Delta_\kappa} \Psi(-y)\right|^2\\
    &\qquad\qquad\qquad\times V_1(x,t) V_2(y,s)h_\kappa(x) h_\kappa(y)dxdydtds.
\end{align*}
In order to obtain
\begin{equation*}
 \big\|\sum\limits_{j\in\mathbb{Z}} 2^{jz}W_1T^{(0)}_jW_2\big\|^2_{\mathfrak{S}^{2}(L^2(\mathbb{R}, L_\kappa^2(\mathbb{R}^n)))} \lesssim_\psi \|W_1\|^2_{L^4(\mathbb{R},L^r_\kappa(\mathbb{R}^n))}\|W_2\|^2_{L^4(\mathbb{R},L^r_\kappa(\mathbb{R}^n))}
\end{equation*}
for each $r\in(2,4)$, it is equivalent to proving
\begin{equation}\label{bilinear}
\sum\limits_{j\in\mathbb{Z}} \left|T_{j,r}(V_1,V_2)\right|\lesssim_\psi \|V_1\|_{L^2(\mathbb{R},L^\frac{r}{2}_\kappa(\mathbb{R}^n))}\|V_2\|_{L^2(\mathbb{R},L^\frac{r}{2}_\kappa(\mathbb{R}^n))}.
\end{equation}
Our proof of \eqref{bilinear} is based on bilinear interpolation inspired by Keel-Tao \cite{KT} in the proof of the endpoint classical Strichartz estimates. For any $r\in (2,4)$ fixed, using the dispersive estimate for the Dunkl-Schr\"{o}dinger propagator \eqref{dispersive} and Lemma \ref{new inequality}, we have
\begin{align*}
    \left|T_{j,r}(V_1,V_2)\right|&\lesssim_\psi 2^{-2\beta(r,r)j}\int_{\mathbb{R}^2}\int_{\left||x|-|y|\right|\leq 2^{j+4}}|\chi\left(2^{-j}(t-s)\right)||t-s|^{-N}\\
    &\qquad\qquad\qquad\times |V_1(x,t)| |V_2(y,s)|h_\kappa(x) h_\kappa(y)dxdydtds\\
    &\lesssim_\psi 2^{-2\beta(r,r)j}2^{-Nj}\int_{\mathbb{R}^2}\int_{\left||x|-|y|\right|\leq 2^{j+4}}|\chi\left(2^{-j}(t-s)\right)|\\
    &\qquad\qquad\qquad\times |V_1(x,t)| |V_2(y,s)|h_\kappa(x) h_\kappa(y)dxdydtds\\
    &\lesssim_\psi 2^{-2\beta(r,r)j}2^{-Nj}\int_{\mathbb{R}^2}|\chi\left(2^{-j}(t-s)\right)|\\
    &\qquad\qquad\qquad\times \|I_{B(0,2^{j+4})(|\cdot|)}\|_{L^c_\kappa(\mathbb{R}^n)}\|V_1(\cdot,t)\|_{L^\frac{a}{2}_\kappa(\mathbb{R}^n)} \|V_1(\cdot,s)\|_{L^\frac{b}{2}_\kappa(\mathbb{R}^n)}dtds\\
    &\lesssim_\psi 2^{-2\beta(r,r)j}2^{N(1-\frac{2}{a}-\frac{2}{b})j}\int_{\mathbb{R}^2}|\chi\left(2^{-j}(t-s)\right)|\|V_1(\cdot,t)\|_{L^\frac{a}{2}_\kappa(\mathbb{R}^n)} \|V_1(\cdot,s)\|_{L^\frac{b}{2}_\kappa(\mathbb{R}^n)}dtds\\
\end{align*}
for any $a,b\geq 2$ and $c\geq 1$ such that $1\leq\frac{2}{a}+\frac{2}{b}=2-\frac{1}{c}$. Applying Young's convolution inequality, we have
\begin{align*}
    \left|T_{j,r}(V_1,V_2)\right|&\lesssim_\psi 2^{-2\beta(r,r)j}2^{N(1-\frac{2}{a}-\frac{2}{b})j} \|\chi(2^{-j}\cdot)\|_{L^1(\mathbb{R})}\|V_1\|_{L^2(\mathbb{R},L^\frac{a}{2}_\kappa(\mathbb{R}^n))}\|V_2\|_{L^2(\mathbb{R},L^\frac{b}{2}_\kappa(\mathbb{R}^n))}\\
    &\lesssim_\psi 2^{\alpha(a,b)j} \|V_1\|_{L^2(\mathbb{R},L^\frac{a}{2}_\kappa(\mathbb{R}^n))}\|V_2\|_{L^2(\mathbb{R},L^\frac{b}{2}_\kappa(\mathbb{R}^n))},
\end{align*}
where 
\begin{equation*}
    \alpha(a,b)=2\beta(a,b)-2\beta(r,r)=2N\left(\frac{2}{r}-\left(\frac{1}{a}+\frac{1}{b}\right)\right),
\end{equation*}
which indicates that the vector-valued bilinear operator $T=\{T_{j,r}\}_{j\in\mathbb{Z}}$ is bounded from
\begin{align*}
    &T: L^2(\mathbb{R},L^\frac{a}{2}_\kappa(\mathbb{R}^n))\times L^2(\mathbb{R},L^\frac{a}{2}_\kappa(\mathbb{R}^n))\rightarrow \ell_\infty^{-\alpha(a,a)}\\
    &T: L^2(\mathbb{R},L^\frac{a}{2}_\kappa(\mathbb{R}^n))\times L^2(\mathbb{R},L^\frac{b}{2}_\kappa(\mathbb{R}^n))\rightarrow \ell_\infty^{-\alpha(a,b)}\\
    &T: L^2(\mathbb{R},L^\frac{b}{2}_\kappa(\mathbb{R}^n))\times L^2(\mathbb{R},L^\frac{a}{2}_\kappa(\mathbb{R}^n))\rightarrow \ell_\infty^{-\alpha(b,a)}=\ell_\infty^{-\alpha(a,b)}
\end{align*}
with any $a,b\in (2,4)$. By the bilinear real interpolation argument in Theorem \ref{bilinear-argument}, it reveals that $T$ is also bounded from
\begin{small}\begin{equation*}
(L^2(\mathbb{R},L^\frac{a}{2}_\kappa(\mathbb{R}^n)),L^2(\mathbb{R},L^\frac{b}{2}_\kappa(\mathbb{R}^n)))_{\theta_0,p}\times (L^2(\mathbb{R},L^\frac{a}{2}_\kappa(\mathbb{R}^n)),L^2(\mathbb{R},L^\frac{b}{2}_\kappa(\mathbb{R}^n)))_{\theta_1,q}\rightarrow (\ell_\infty^{-\alpha(a,a)},\ell_\infty^{-\alpha(a,b)})_{\theta,1}
\end{equation*}
\end{small}
for any $0<\theta_0,\theta_1<\theta<1$, $1\leq p,q\leq\infty$ such that $\frac{1}{p}+\frac{1}{q}\geq 1$ and $\theta=\theta_0+\theta_1$.

Specifically, for each $r\in (2,4)$ fixed, we choose $\delta(r)>0$ sufficiently small such that $\frac{1}{a}=\frac{1}{r}+\delta(r)\in\left(\frac{1}{4},\frac{1}{2}\right)$ and $\frac{1}{b}=\frac{1}{r}-2\delta(r)\in\left(\frac{1}{4},\frac{1}{2}\right)$. 
Taking $p=q=2$, $\theta_0=\theta_1=\frac{1}{3}$, $\theta=\theta_0+\theta_1=\frac{2}{3}$, using the real interpolation identities in \eqref{L-qp} and \eqref{l-infty-s}, it follows that $T$ is bounded from
\begin{equation*}
    L^2(\mathbb{R},L^{\frac{r}{2},2}_\kappa(\mathbb{R}^n))\times L^2(\mathbb{R},L^{\frac{r}{2},2}_\kappa(\mathbb{R}^n))\rightarrow \ell^0_1.
\end{equation*}
By the embedding relation \eqref{embedding}, we have $L^2(\mathbb{R},L^{\frac{r}{2}}_\kappa(\mathbb{R}^n))\subseteq L^2(\mathbb{R},L^{\frac{r}{2},2}_\kappa(\mathbb{R}^n))$ for $r\in (2,4)$ and then we see $T$ is bounded from
\begin{equation*}
    L^2(\mathbb{R},L^{\frac{r}{2}}_\kappa(\mathbb{R}^n))\times L^2(\mathbb{R},L^{\frac{r}{2}}_\kappa(\mathbb{R}^n))\rightarrow \ell^0_1,
\end{equation*}
which essentially gives  \eqref{bilinear}.
\end{proof}

\begin{proof}[Proof of Remark \ref{N=2}, \ref{1<N<2} and \ref{N=1}]
From the proof of Theorem \ref{frequency-localized}, when $1<N\leq2$, the results \eqref{OAB} also hold true for $(\frac{1}{p},\frac{1}{q})$ belonging to $OAB\setminus A$ and $\alpha=\alpha^*(p,q)$. When $N=2$, the point $C$ and $D$ coincide and at this point the single-function Strichartz estimate \eqref{Single-Dunkl} fails. The boundedness of points belonging to int $OAC$ follows by the interpolation between points on the line segment $(A,C)$ arbitrarily close to the point $C$ and points belonging to int $OAB$ arbitrarily close to the line segment $(O,A)$. When $N=1$, the point $A$, $E$ and $G$ coincide and it follows from the interpolation between points on the line segment $(A,B]$ and the origin $O$.
\end{proof}
\section{Proof of the strong-type frequency global estimates in Theorem \ref{globally}}\label{sec6}
The main objective of this section is to establish the strong-type global frequency estimates stated in Theorem \ref{globally}. We will present the proof through a series of steps. First, we extend globally the following restricted weak-type estimates from the frequency localized estimates.
\begin{theorem}\label{restricted weak-type}
   Suppose $N\geq1$. If $(\frac{1}{p},\frac{1}{q})$ belongs to int $OAB$, $\alpha=\alpha^*(p,q)$, and $2s=N-\left(\frac{2}{q}+\frac{N}{p}\right)$, then
\begin{equation*}
	\bigg\|\sum_{j=1}^\infty \lambda_j|e^{it\Delta_\kappa}f_j|^2\bigg\|_{L^{q,\infty}(\mathbb{R}, L_\kappa^p(\mathbb{R}^n))}\lesssim \|\{\lambda_j\}^\infty_{j=1}\|_{\ell^{\alpha,1}} 
\end{equation*}	
  holds for all families of orthonormal functions $\{f_j\}_{j=1}^\infty$ in $\dot{H}_\kappa^s(\mathbb{R}^n)$ and all sequences $\{\lambda_j\}^\infty_{j=1}$ in $\ell^{\alpha,1}$. 
\end{theorem}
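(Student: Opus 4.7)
The plan is to combine the frequency-localized estimates of Theorem \ref{frequency-localized} with Proposition \ref{upgrading}, in the spirit of the Littlewood--Paley/real-interpolation argument of \cite{Bez-Hong-Lee-Nakamura-Sawano}. Given a target exponent $(\tfrac{1}{p},\tfrac{1}{q})\in{\rm int}\,OAB$ satisfying $\tfrac{2}{q}+\tfrac{N}{p}=N-2s$, I would choose two auxiliary exponents $q_0<q<q_1$ with $(\tfrac{1}{p},\tfrac{1}{q_i})\in{\rm int}\,OAB$ and feed into Proposition \ref{upgrading} two frequency-localized bounds whose $k$-decay factors have opposite signs. To produce these bounds, I first transfer Theorem \ref{frequency-localized} from $P$ to $P_k=\phi(2^{-k}\sqrt{-\Delta_\kappa})$ by dilation, and then absorb the Sobolev weight via the identity $P_k(-\Delta_\kappa)^{-s/2}=2^{-ks}\tilde{P}_k$, where $\tilde{P}_k$ is the Littlewood--Paley operator with multiplier $\tilde{\phi}(2^{-k}|\cdot|)$ and $\tilde{\phi}(r)=r^{-s}\phi(r)$ is smooth and supported in $[\tfrac12,2]$.

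Concretely, the dilation covariance $e^{it\Delta_\kappa}D_\lambda=D_\lambda e^{it\lambda^2\Delta_\kappa}$ together with $\|D_\lambda h\|_{L_\kappa^p}=\lambda^{-N/p}\|h\|_{L_\kappa^p}$ converts Theorem \ref{frequency-localized} into the scaled inequality
\begin{equation*}
\bigg\|\sum_j\lambda_j|e^{it\Delta_\kappa}P_kf_j|^2\bigg\|_{L^{q'}(\mathbb{R},L_\kappa^p(\mathbb{R}^n))}\lesssim 2^{k(N-\frac{2}{q'}-\frac{N}{p})}\|\{\lambda_j\}\|_{\ell^{\alpha^*(p,q')}}
\end{equation*}
for $\{f_j\}$ orthonormal in $L_\kappa^2$. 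Since $(-\Delta_\kappa)^{s/2}$ is an isometry $\dot{H}_\kappa^s\to L_\kappa^2$, the substitution $\tilde{f}_j=(-\Delta_\kappa)^{s/2}f_j$, combined with $e^{it\Delta_\kappa}P_kf_j=2^{-ks}e^{it\Delta_\kappa}\tilde{P}_k\tilde{f}_j$ and the fact that Theorem \ref{frequency-localized} is valid for any smooth cutoff with support in $[\tfrac12,2]$ such as $\tilde{\phi}$, yields for $\{f_j\}$ orthonormal in $\dot{H}_\kappa^s$
\begin{equation*}
\bigg\|\sum_j\lambda_j|e^{it\Delta_\kappa}P_kf_j|^2\bigg\|_{L^{q'}(\mathbb{R},L_\kappa^p(\mathbb{R}^n))}\lesssim 2^{k\beta(q')}\|\{\lambda_j\}\|_{\ell^{\alpha^*(p,q')}},\qquad \beta(q'):=N-\tfrac{2}{q'}-\tfrac{N}{p}-2s.
\end{equation*}
By the choice of $q_0<q<q_1$, one has $\beta(q_0)<0<\beta(q_1)$, and the embedding $L^{q_i}\hookrightarrow L^{q_i,\infty}$ puts these estimates in the form required by Proposition \ref{upgrading}.

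Setting $\epsilon_0:=-\beta(q_0)$, $\epsilon_1:=\beta(q_1)$ and $\theta:=\epsilon_1/(\epsilon_0+\epsilon_1)$, Proposition \ref{upgrading} applied to $g_j:=e^{it\Delta_\kappa}f_j$ (which commutes with $P_k$ as both are Dunkl-Fourier multipliers) delivers
\begin{equation*}
\bigg\|\sum_j\lambda_j|e^{it\Delta_\kappa}f_j|^2\bigg\|_{L^{q,\infty}(\mathbb{R},L_\kappa^p(\mathbb{R}^n))}\lesssim\|\{\lambda_j\}\|_{\ell^{\alpha,1}}
\end{equation*}
with $\tfrac{1}{q}=\tfrac{\theta}{q_0}+\tfrac{1-\theta}{q_1}$ and $\tfrac{1}{\alpha}=\tfrac{\theta}{\alpha^*(p,q_0)}+\tfrac{1-\theta}{\alpha^*(p,q_1)}$; the defining relation $N/\alpha^*(p,q')=1/q'+N/p$ is linear in $1/q'$, so a short computation gives $\alpha=\alpha^*(p,q)$, exactly as required. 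The main obstacle I anticipate is the careful bookkeeping of scaling exponents in the transition $P\mapsto P_k$ combined with the Sobolev twist $(-\Delta_\kappa)^{-s/2}$: the net exponent $\beta(q')$ has to be identified precisely so that the two auxiliary points straddle the line $\tfrac{2}{q'}+\tfrac{N}{p}=N-2s$ and Proposition \ref{upgrading} can be closed. A secondary technical point is the uniform boundedness hypothesis on $\{g_j\}$ in Proposition \ref{upgrading}, which can be handled by first working with $f_j$ in a dense subclass of $\dot{H}_\kappa^s$ (e.g.\ Schwartz functions with Dunkl transform supported in a fixed annulus, hence in every $\dot{H}_\kappa^\sigma$) and then extending to general data by density.
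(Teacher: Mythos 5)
Your proposal is correct and follows essentially the same route as the paper: rescale the frequency-localized estimate to $P_k$, absorb the Sobolev weight $(-\Delta_\kappa)^{-s/2}$ (which contributes the factor $2^{-ks}$ and converts orthonormality in $\dot H_\kappa^s$ to orthonormality in $L_\kappa^2$), obtain two bounds with oppositely signed exponential decay in $k$ by perturbing $\tfrac{1}{q}$ within ${\rm int}\,OAB$, and close via Proposition \ref{upgrading}, checking $\alpha=\alpha^*(p,q)$ from the affine dependence of $N/\alpha^*(p,q')$ on $1/q'$. The only cosmetic difference is that you make the rescaling and the modified cutoff $\tilde\phi(r)=r^{-s}\phi(r)$ explicit where the paper simply invokes ``a rescaling argument,'' and you additionally flag the uniform-boundedness hypothesis of Proposition \ref{upgrading}, which the paper leaves implicit.
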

\begin{proof}
By Remark \ref{N=2}, \ref{1<N<2} and \ref{N=1}, for $N\geq 1$, we have  for $(\frac{1}{p},\frac{1}{q})$ belonging to $OAB\setminus [A,O)$ and $\alpha=\alpha^*(p,q)$,
\begin{equation*}
			\bigg\|\sum_{j=1}^\infty \lambda_j|e^{it\Delta_\kappa}P_0f_j|^2\bigg\|_{L^{q}(\mathbb{R}, L_\kappa^p(\mathbb{R}^n))}\lesssim \|\{\lambda_j\}^\infty_{j=1}\|_{\ell^{\alpha}} 
\end{equation*}	
 holds for all families of orthonormal functions $\{f_j\}_{j=1}^\infty$ in $L_\kappa^2(\mathbb{R}^n)$ and all sequences $\{\lambda_j\}^\infty_{j=1}$ in $\ell^{\alpha}$. By a rescaling argument, we have
 \begin{equation}\label{rescaling}
			\bigg\|\sum_{j=1}^\infty \lambda_j|e^{it\Delta_\kappa}P_k(-\Delta_\kappa)^{-\frac{s}{2}}f_j|^2\bigg\|_{L^{q}(\mathbb{R}, L_\kappa^p(\mathbb{R}^n))}\lesssim 2^{k(N-2s-\frac{2}{q}-\frac{N}{p})}\|\{\lambda_j\}^\infty_{j=1}\|_{\ell^{\alpha}}.
\end{equation}	
For any fixed point $(\frac{1}{p},\frac{1}{q})$ belonging to int $OAB$ and $2s=N-\left(\frac{2}{q}+\frac{N}{p}\right)$, select $\varepsilon>0$ sufficiently small such that $(\frac{1}{p_0},\frac{1}{q})$ and $(\frac{1}{p_1},\frac{1}{q})$ also belong to int $OAB$, where 
\begin{equation*}
    \frac{1}{q_i}=\frac{1}{q}+(-1)^i\frac{\varepsilon}{2}, \;i=0,1.
\end{equation*}
It follows from \eqref{rescaling} that for $i=0,1$
\begin{align*}
			\bigg\|\sum_{j=1}^\infty \lambda_j|e^{it\Delta_\kappa}P_k(-\Delta_\kappa)^{-\frac{s}{2}}f_j|^2\bigg\|_{L^{q_i,\infty}(\mathbb{R}, L_\kappa^p(\mathbb{R}^n))}&\leq \bigg\|\sum_{j=1}^\infty \lambda_j|e^{it\Delta_\kappa}P_k(-\Delta_\kappa)^\frac{s}{2}f_j|^2\bigg\|_{L^{q_i}(\mathbb{R}, L_\kappa^p(\mathbb{R}^n))}\\
   &\lesssim 2^{k(N-2s-\frac{2}{q_i}-\frac{N}{p})}\|\{\lambda_j\}^\infty_{j=1}\|_{\ell^{\alpha^*(p,q_i)}}\\
   &= 2^{(-1)^{i+1}\varepsilon_ik}\|\{\lambda_j\}^\infty_{j=1}\|_{\ell^{\alpha^*(p,q_i)}}.
\end{align*}	
 holds for all families of orthonormal functions $\{f_j\}_{j=1}^\infty$ in $L_\kappa^2(\mathbb{R}^n)$ and all sequences $\{\lambda_j\}^\infty_{j=1}$ in $\ell^{\alpha}$, where 
 $$\varepsilon_i=(-1)^{i+1}(N-2s-\frac{2}{q_i}-\frac{N}{p})=\varepsilon.$$
Applying Proposition \ref{upgrading}, it implies
\begin{equation*}
			\bigg\|\sum_{j=1}^\infty \lambda_j|e^{it\Delta_\kappa}(-\Delta_\kappa)^{-\frac{s}{2}}f_j|^2\bigg\|_{L^{q,\infty}(\mathbb{R}, L_\kappa^p(\mathbb{R}^n))}\lesssim \|\{\lambda_j\}^\infty_{j=1}\|_{\ell^{\alpha^*(p,q),1}}.
\end{equation*}	
holds for all families of orthonormal functions $\{f_j\}_{j=1}^\infty$ in $L_\kappa^2(\mathbb{R}^n)$ and all sequences $\{\lambda_j\}^\infty_{j=1}$ in $\ell^{\alpha,1}$, which is indeed our desired result.
\end{proof}

Next, based on the results in Theorem \ref{Lorentz-refined} and \ref{restricted weak-type}, by complex interpolation, we upgrade the restricted weak-type estimates to restricted strong-type estimates in interior of $OAF$.
\begin{theorem}\label{restricted strong-type}
   Suppose $N\geq1$. If $(\frac{1}{p},\frac{1}{q})$ belongs to int $OAF$, $\alpha=\alpha^*(p,q)$, and $2s=N-\left(\frac{2}{q}+\frac{N}{p}\right)$, then
\begin{equation*}
			\bigg\|\sum_{j=1}^\infty \lambda_j|e^{it\Delta_\kappa}f_j|^2\bigg\|_{L^{q}(\mathbb{R}, L_\kappa^p(\mathbb{R}^n))}\lesssim \|\{\lambda_j\}^\infty_{j=1}\|_{\ell^{\alpha,1}} 
\end{equation*}	
  holds for all families of orthonormal functions $\{f_j\}_{j=1}^\infty$ in $\dot{H}_\kappa^s(\mathbb{R}^n)$ and all sequences $\{\lambda_j\}^\infty_{j=1}$ in $\ell^{\alpha,1}$. 
\end{theorem}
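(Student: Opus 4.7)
}
The strategy is to upgrade the restricted weak-type estimate from Theorem \ref{restricted weak-type} to restricted strong-type in the smaller region int $OAF$ by interpolating it against the Lorentz-refined strong-type estimate from Theorem \ref{Lorentz-refined}, which is available on the boundary segment $(A,F]$ (where $s=0$). The two inputs complement each other exactly: Theorem \ref{Lorentz-refined} provides $L^{q,\alpha}$ strong-type control with $\ell^{\alpha}$ on the sequence side at the scaling line, while Theorem \ref{restricted weak-type} gives $L^{q,\infty}$ restricted weak-type with $\ell^{\alpha,1}$ throughout int $OAB$. Bilinear real interpolation (in the spirit of Theorem \ref{bilinear-argument}) between these should produce, at any intermediate point, the desired $L^{q_\theta}$ strong-type in space-time combined with $\ell^{\alpha_\theta,1}$ on the sequence side.

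Fix a target point $X_\theta=(\tfrac{1}{p_\theta},\tfrac{1}{q_\theta})\in \text{int}\,OAF$ and choose $X_0\in(A,F]$ and $X_1\in \text{int}\,OAB$ together with $\theta\in(0,1)$ so that $X_\theta=(1-\theta)X_0+\theta X_1$. The scaling condition forces $s_0=0$ and $s_\theta=\theta s_1$, where $2s_i=N-\tfrac{2}{q_i}-\tfrac{N}{p_i}$. The first step is to invoke the duality principle (Lemma \ref{duality-principle}, applied with its Lorentz-time index) to rewrite both hypotheses as Schatten-class bounds for the sandwiched operator $T_{s}:=W\,e^{it\Delta_\kappa}(-\Delta_\kappa)^{-s}(e^{it\Delta_\kappa})^{*}\overline{W}$, namely
\begin{equation*}
\|T_{0}\|_{\mathfrak{S}^{\alpha_0'}}\lesssim \|W\|_{L^{2q_0',2\alpha_0'}(\mathbb{R},L^{2p_0'}_\kappa)}^2,
\qquad
\|T_{s_1}\|_{\mathfrak{S}^{\alpha_1',\infty}}\lesssim \|W\|_{L^{2q_1',2}(\mathbb{R},L^{2p_1'}_\kappa)}^2.
\end{equation*}

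Next, introduce the analytic family
\begin{equation*}
\mathcal{T}_z\;=\;W\,e^{it\Delta_\kappa}(-\Delta_\kappa)^{-zs_1}(e^{it\Delta_\kappa})^{*}\overline{W},\qquad 0\leq\mathrm{Re}\,z\leq 1,
\end{equation*}
which is analytic in the Stein sense. On $\mathrm{Re}\,z=0$ the factor $(-\Delta_\kappa)^{-izs_1}$ is an $L^2_\kappa$-unitary multiplier, so the Schatten norm of $\mathcal{T}_z$ equals that of $T_0$ with only a harmless polynomial-in-$|\mathrm{Im}\,z|$ growth; the same holds on $\mathrm{Re}\,z=1$ against $T_{s_1}$. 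Applying the Stein--Hirschman analytic interpolation machinery for Schatten classes (Lemma \ref{interpolation schatten}) adapted to the Lorentz-Schatten endpoint $\mathfrak{S}^{\alpha_1',\infty}$ and to Lorentz-time $L^{2q',2r'}$ targets — combined with the real-interpolation identities \eqref{L-qp}, \eqref{l-qp}, \eqref{l-infty-s} used to identify intermediate spaces — yields at $z=\theta$ the strong-type Schatten bound
\begin{equation*}
\|\mathcal{T}_\theta\|_{\mathfrak{S}^{\alpha_\theta'}}\lesssim \|W\|_{L^{2q_\theta'}(\mathbb{R},L^{2p_\theta'}_\kappa)}^2,
\end{equation*}
with $1/\alpha_\theta=(1-\theta)/\alpha_0+\theta/\alpha_1$ and the analogous convex-combination formulas for $p_\theta,q_\theta$. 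Invoking Lemma \ref{duality-principle} in reverse at the point $X_\theta$ (with $\ell^{\alpha_\theta,1}$ coming from the dual of the $\mathfrak{S}^{\alpha_\theta'}$ target and $L^{q_\theta}$ from the dual of $L^{2q_\theta'}$) returns exactly the claimed orthonormal Strichartz estimate. Since $\theta$ and the endpoints $X_0,X_1$ were arbitrary subject to $X_\theta\in\text{int}\,OAF$, the theorem follows.

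\paragraph{Main obstacle.} The only non-routine part is the interpolation in Step 3: the strong-type endpoint sits in the full-strength Schatten/Lebesgue pair $(\mathfrak{S}^{\alpha_0'},L^{2\alpha_0'}_t)$, whereas the restricted weak-type endpoint sits in the Lorentz/weak-Schatten pair $(\mathfrak{S}^{\alpha_1',\infty},L^{2}_t)$, and Lemma \ref{interpolation schatten} as stated covers only strong Schatten-Lebesgue endpoints. The remedy is a two-step interpolation — first a complex Stein interpolation in $z$ keeping the Lorentz secondary indices frozen, and then a real interpolation in the Lorentz parameters using \eqref{L-qp}--\eqref{l-infty-s} — where one must verify that the secondary indices collapse to $L^{q_\theta,q_\theta}=L^{q_\theta}$ and $\ell^{\alpha_\theta,1}$ at the correct interpolation level, which in turn forces the particular choice $r=q_\theta$ on the space-time side and $r=1$ on the sequence side. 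Checking these exponents against $\alpha_\theta=\alpha^*(p_\theta,q_\theta)$ and the scaling condition $2s_\theta=N-\tfrac{2}{q_\theta}-\tfrac{N}{p_\theta}$ is the main arithmetic task.
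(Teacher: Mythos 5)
Your high-level strategy --- interpolating the Lorentz-refined strong-type estimate of Theorem \ref{Lorentz-refined} on $(A,F]$ against the restricted weak-type estimate of Theorem \ref{restricted weak-type} in ${\rm int}\,OAB$, with the two endpoints and the target collinear through $O$ so that the Sobolev index interpolates consistently --- is exactly the paper's. But the crux of the theorem is precisely the step you defer to ``the main arithmetic task.'' Interpolating with parameter $\theta$ between an $L^{q_0,\alpha^*(p_0,q_0)}$ bound and an $L^{q_1,\infty}$ bound yields $L^{q,r}$ with $\frac1r=\frac{1-\theta}{\alpha^*(p_0,q_0)}+\frac{\theta}{\infty}=\frac{1-\theta}{\alpha^*(p_0,q_0)}$, and for a generic placement of the endpoints this $r$ exceeds $q$, which is strictly weaker than the claimed strong type; your closing sentence (``$\theta$ and the endpoints were arbitrary'') is therefore not right. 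The paper's resolution is to fix $G_0$ as the intersection of the ray $OG$ with $(A,F)$ and to spend the one remaining degree of freedom --- the position of $G_1$ on $(O,G)$ --- on the exponent: taking $\theta=1-\alpha^*(p_0,q_0)/q$, which lies in $(0,1)$ because $\alpha^*(p_0,q_0)<q_0<q$ along $(A,F)$, gives $r=q$ exactly. Also, on the sequence side the interpolation does not return $\ell^{\alpha,1}$ as you assert; it returns $\ell^{\alpha,\tau}$ with $\frac1\tau=\frac{1-\theta}{\alpha^*(p_0,q_0)}+\theta\geq1$, and one then uses the embedding $\ell^{\alpha,1}\subseteq\ell^{\alpha,\tau}$. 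Without exhibiting an admissible $\theta$ achieving $r=q$, the proof is incomplete at its central point.

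Separately, your detour through the Schatten side introduces obstructions the paper avoids. Lemma \ref{duality-principle} converts $\ell^{\alpha}\to L^{q,r}$ orthonormal bounds into $\mathfrak{S}^{\alpha'}$ bounds only; it does not provide the Lorentz--Schatten equivalence $\|T_{s_1}\|_{\mathfrak{S}^{\alpha_1',\infty}}\lesssim\|W\|^2_{L^{2q_1',2}}$ that you use to encode the restricted weak-type input, and Lemma \ref{interpolation schatten} does not admit a weak-Schatten endpoint --- you flag this but the proposed ``two-step'' repair is not carried out. Moreover, the boundary bounds for your analytic family $\mathcal{T}_z$ at $\mathrm{Re}\,z=0,1$ do not follow from those for $T_0$ and $T_{s_1}$ merely because $(-\Delta_\kappa)^{-i\,\mathrm{Im}(z)s_1}$ is unitary: the imaginary power sits inside the sandwich and breaks the $AA^*$ structure on which the duality-derived Schatten bounds rest, so controlling the growth in $|\mathrm{Im}\,z|$ requires a separate argument. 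The paper bypasses all of this by interpolating the two orthonormal-family estimates directly (for a fixed orthonormal family the map $\{\lambda_j\}\mapsto\sum_j\lambda_j|e^{it\Delta_\kappa}f_j|^2$ is linear in $\{\lambda_j\}$), using only the Lorentz interpolation identities \eqref{L-qp}--\eqref{l-infty-s} and the monotonicity \eqref{embedding}.
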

\begin{proof}
    For any point $G=(\frac{1}{p},\frac{1}{q})$ belonging to int $OAF$ fixed, let $G_0=(\frac{1}{p_0},\frac{1}{q_0})$ be the intersection point of the line through the origin $O$ and $G$ with the line segment $(A,F)$ and we have $\frac{2}{q_0}+\frac{N}{q_0}=N$. Choose a point $G_1=(\frac{1}{p_1},\frac{1}{q_1})$ on the line segment $(O,G)$ which will be determined later. It is easy to see that $G_1$ belongs to int $OAB$ and $G$ is on the line segment $(G_0,G_1)$. Then there exists $\theta\in(0,1)$ such that $G=(1-\theta)G_0+\theta G_1$, i.e.,
    \begin{equation}\label{interpolation-index}
        \frac{1}{p}=\frac{1-\theta}{p_0}+\frac{\theta}{p_1} \text{ and }\frac{1}{q}=\frac{1-\theta}{q_0}+\frac{\theta}{q_1}. 
    \end{equation}
By the application of Theorem \ref{Lorentz-refined} and \ref{restricted weak-type}, we have 
\begin{equation*}
 \bigg\|\sum_{j=1}^\infty \lambda_j|e^{it\Delta_\kappa}f_j|^2\bigg\|_{L^{q_0,\alpha^*(p_0,q_0)}(\mathbb{R}, L_\kappa^{p_0}(\mathbb{R}^n))}\lesssim \|\{\lambda_j\}^\infty_{j=1}\|_{\ell^{\alpha^*(p_0,q_0)}},
\end{equation*}
 holds for all families of orthonormal functions $\{f_j\}_{j=1}^\infty$ in $L_\kappa^2(\mathbb{R}^n)$ and all sequences $\{\lambda_j\}^\infty_{j=1}$ in $\ell^{\alpha^*(p_0,q_0)}$,
 and for $2s_1=N-\left(\frac{2}{q_1}+\frac{N}{p_1}\right)$
\begin{equation*}
 \bigg\|\sum_{j=1}^\infty \lambda_j|e^{it\Delta_\kappa}f_j|^2\bigg\|_{L^{q_1,\infty}(\mathbb{R}, L_\kappa^{p_1}(\mathbb{R}^n))}\lesssim \|\{\lambda_j\}^\infty_{j=1}\|_{\ell^{\alpha^*(p_1,q_1),1}},  
\end{equation*}
holds for all families of orthonormal functions $\{f_j\}_{j=1}^\infty$ in $\dot{H}_\kappa^{s_1}(\mathbb{R}^n)$ and all sequences $\{\lambda_j\}^\infty_{j=1}$ in $\ell^{\alpha^*(p_1,q_1),1}$.

Using complex interpolation between the above two estimates with $\theta$,   for $\frac{1}{r}=\frac{1-\theta}{\alpha^*(p_0,q_0)}+\frac{\theta}{\infty}$, $\frac{1}{\tau}=\frac{1-\theta}{\alpha^*(p_0,q_0)}+\frac{\theta}{1}$, $\frac{1}{\alpha^*(p,q)}=\frac{1-\theta}{\alpha^*(p_0,q_0)}+\frac{\theta}{\alpha^*(p_1,q_1)}$ and $2s=2\left((1-\theta)\cdot 0+\theta s_1\right)= N-\left(\frac{2}{q}+\frac{N}{p}\right)$, we get 
\begin{equation*}
 \bigg\|\sum_{j=1}^\infty \lambda_j|e^{it\Delta_\kappa}f_j|^2\bigg\|_{L^{q,r}(\mathbb{R}, L_\kappa^{p}(\mathbb{R}^n))}\lesssim \|\{\lambda_j\}^\infty_{j=1}\|_{\ell^{\alpha^*(p,q),\tau}}
\end{equation*}
  for all families of orthonormal functions $\{f_j\}_{j=1}^\infty$ in $\dot{H}_\kappa^{s}(\mathbb{R}^n)$ and all sequences $\{\lambda_j\}^\infty_{j=1}$ in $\ell^{\alpha^*(p_1,q_1),\tau}$.
Since $\tau\geq 1$,  from the embedding relation of Lorentz spaces \eqref{embedding}, we deduce that
\begin{equation*}
 \bigg\|\sum_{j=1}^\infty \lambda_j|e^{it\Delta_\kappa}f_j|^2\bigg\|_{L^{q,r}(\mathbb{R}, L_\kappa^{p}(\mathbb{R}^n))}\lesssim \|\{\lambda_j\}^\infty_{j=1}\|_{\ell^{\alpha^*(p,q),1}}. 
\end{equation*}
We observe that $G_0=(\frac{1}{p_0},\frac{1}{q_0})$ is on the line segment (A,F) satisfying $1\leq \alpha^*(p_0,q_0)<q_0<q$ and $1\leq q\leq q_1<\infty$. In order to obtain our desired results, we should choose proper $G_1$ on the line segment $(O,G)$ such that $r=q$. Indeed, once we select the interpolation index $\theta$, we will determine the point $G_1$. We choose $\theta=1-\frac{\alpha^*(p_0,q_0)}{q}\in (0,1)$ and one can check that the point $G_1=(\frac{1}{p_1},\frac{1}{q_1})$ defined by \eqref{interpolation-index} is on the line segment $(O,G)$.
\end{proof}

Finally, we upgrade the restricted strong-type estimates in Theorem \ref{restricted strong-type} to the strong-type estimates by a succession of interpolation arguments.

\begin{proof}[Proof of Theorem \ref{globally}]
Firstly, we claim that it suffices to prove the desired results for all $(\frac{1}{p},\frac{1}{q})$ belonging to int $OAF$. Once we obtain these, by the interpolation with the estimates in Theorem \ref{M-S} on the line segment $(A,B]$, we can prove the desired estimates for all $(\frac{1}{p},\frac{1}{q})$ belonging to int $OAB$. It will also quickly imply the results in Theorem \ref{M-S} $(2)$. Indeed, for $N=2$, the point $C$ and $D$ coincide and at this point the single-function Strichartz estimate \eqref{Single-Dunkl} fails. The boundedness of points belonging to int $OAC$ follows by the interpolation between points on the line segment $(A,C)$ arbitrarily close to the point $C$ and points belonging to int $OAF$ arbitrarily close to the line segment $(O,A)$. When $N>2$, the single-funtion Strichartz estimate \eqref{Sobolev-single} and the triangle inequality immediately indicate that the inequality \eqref{OAB} holds for points $(\frac{1}{p},\frac{1}{q})$ belonging to the line segment $[C,D)$ and $\alpha=1$. Interpolating with points belonging to int $OAF$ arbitrarily close to the line segment $(O,A)$, we get the boundedness of points belonging to int $OACD$.

For any point $(\frac{1}{p},\frac{1}{q})$ belonging to int $OAF$ fixed and $2s=N-\left(\frac{2}{q}+\frac{N}{p}\right)$, we choose $\delta>0$ sufficiently small such that for $(\frac{1}{p_0},\frac{1}{q_0})$ and $(\frac{1}{p_1},\frac{1}{q_1})$ also belong to int $OAF$, where 
\begin{equation*}
    \frac{1}{p_i}=\frac{1}{p}+(-1)^i\frac{\delta}{N} \quad\text{and}\quad \frac{1}{q_i}=\frac{1}{q}+(-1)^{i+1}\frac{\delta}{2},\quad i=0,1.
\end{equation*}
It is clear that $2s=N-\left(\frac{2}{q_i}+\frac{N}{p_i}\right)$ for $i=0,1$, $\frac{1}{p}=\frac{1}{2}\left(\frac{1}{p_0}+\frac{1}{p_1}\right)$, $\frac{1}{q}=\frac{1}{2}\left(\frac{1}{q_0}+\frac{1}{q_1}\right)$ and $\frac{1}{\alpha^*(p,q)}=\frac{1}{2}\left(\frac{1}{\alpha^*(p_0,q_0)}+\frac{1}{\alpha^*(p_1,q_1)}\right)$. It follows from Theorem \ref{restricted strong-type} that for $i=0,1$ and all families of orthonormal functions $\{f_j\}_{j=1}^\infty$ in the common space $\dot{H}_\kappa^{s}(\mathbb{R}^n)$ with
\begin{equation}\label{i=0,1}
			\bigg\|\sum_{j=1}^\infty \lambda_j|e^{it\Delta_\kappa}f_j|^2\bigg\|_{L^{q_i}(\mathbb{R}, L_\kappa^{p_i}(\mathbb{R}^n))}\lesssim \|\{\lambda_j\}^\infty_{j=1}\|_{\ell^{\alpha^*(p_i,q_i),1}}, 
\end{equation}	
    for all sequences $\{\lambda_j\}^\infty_{j=1}$ in $\ell^{\alpha^*(p_i,q_i),1}$.  Moreover, from   \eqref{L-qp} and \eqref{l-qp}, it yields 
  \begin{align*}
       \left(L^{q_0}(\mathbb{R},L_\kappa^{p_0}(\mathbb{R}^n)),L^{q_1}(\mathbb{R},L_\kappa^{p_1}(\mathbb{R}^n))\right)_{\frac{1}{2},q}&=L^{q}(\mathbb{R},L_\kappa^{p,q}(\mathbb{R}^n)),\\
        \left(\ell^{\alpha^*(p_0,q_0),1},\ell^{\alpha^*(p_1,q_1),1}\right)_{\frac{1}{2},q}&=\ell^{\alpha^*(p,q),q}.
  \end{align*}
Using real interpolation on the estimates \eqref{i=0,1}, we obtain
\begin{equation*}
			\bigg\|\sum_{j=1}^\infty \lambda_j|e^{it\Delta_\kappa}f_j|^2\bigg\|_{L^{q}(\mathbb{R}, L_\kappa^{p,q}(\mathbb{R}^n))}\lesssim \|\{\lambda_j\}^\infty_{j=1}\|_{\ell^{\alpha^*(p,q),q}}.
\end{equation*}	
Note that for any $(\frac{1}{p},\frac{1}{q})$ belonging to int $OAF$ it satisfies $\alpha^*(p,q)<q<p$ and by the embedding relation of Lorentz spaces \eqref{embedding}, we have
\begin{equation*}
			\bigg\|\sum_{j=1}^\infty \lambda_j|e^{it\Delta_\kappa}f_j|^2\bigg\|_{L^{q}(\mathbb{R}, L_\kappa^{p}(\mathbb{R}^n))}\lesssim \|\{\lambda_j\}^\infty_{j=1}\|_{\ell^{\alpha^*(p,q)}},
\end{equation*}	
which are our desired estimates.
\end{proof}

\section*{Acknowledgments} SSM is supported by the DST-INSPIRE Faculty Fellowship DST/INSPIRE/04/2023/002038. MS is supported by the Guangdong Basic and Applied Basic Research Foundation (Grant No. 2023A1515010656). HW is supported by the National Natural Science Foundation of China (Grant No. 12171399 and 12271041).


\begin{thebibliography}{99}
		\normalsize
		\baselineskip=17pt
\bibitem{Bennett}\label{Bennet} C. Bennett, R. Sharpley, \emph{Interpolation of Operators}, Academic Press, 1988.

\bibitem{Bez-Hong-Lee-Nakamura-Sawano}\label{Bez-Hong-Lee-Nakamura-Sawano} N. Bez, Y. Hong, S. Lee, S. Nakamura, Y. Sawano, \emph{On the Strichartz estimates for orthonormal systems of initial data with regularity}, Adv. Math. \textbf{354}, 106736 (2019).

\bibitem{BEZ}\label{BEZ} N.  Bez, S.  Lee and S.  Nakamura,  \emph{Strichartz estimates for orthonormal families of initial data and weighted oscillatory integral estimates},  Forum Math. Sigma 9, Paper No. e1  (2021).

\bibitem{Bergh-Lofstrom}\label{Bergh-Lofstrom} J. Bergh, J. L\"{o}fstr\"{o}m, \emph{Interpolation Spaces, An Introduction}, Spring-Verlag, New York, 1976.


\bibitem{Dai-Wang}\label{Dai-Wang} F. Dai and H. Wang, \emph{A transference theorem for the Dunkl transform and its applications}, J. Funct. Anal. \textbf{258}(12), 4052-4074 (2010).

\bibitem{Dai-Ye}\label{Dai-Ye} F. Dai and W. Ye, \emph{Local restriction theorem and maximal Bochner-Riesz operators for the Dunkl transforms}, Trans. Amer. Math. Soc. \textbf{371}, 641–679 (2019).

\bibitem{dun}\label{dun} C. F. Dunkl, \emph{Differential-difference operators associated to reflection groups}, Trans. Amer. Math. Soc. \textbf{311}, 167-183 (1989).
		
\bibitem{dun1991} C. F. Dunkl, \emph{Integral kernels with reflection group invariance}, Canad. J. Math. \textbf{43}, 1213-1227 (1991).
				
\bibitem{Dunklll} 	C. F. Dunkl, \emph{Hankel transforms associated to finite reflection groups}, Contemp. Math. \textbf{138}, 123-138 (1992).

\bibitem{Dunkl book}  C. F. Dunkl and  Y. Xu, \emph{Orthogonal polynomials of several variables}(second edition), Cambridge
University Press, 2014.

\bibitem{FMSW}\label{FMSW} G. Feng, S. S. Mondal, M. Song and H. Wu, \emph{Orthonormal Strichartz inequalities and their applications on abstract measure spaces}, arXiv: 2409.14044 (2024).

\bibitem{Feng-Song}\label{Feng-Song} G. Feng, M. Song, \emph{Restriction theorems and Strichartz inequalities for the Laguerre operator involving orthonormal functions}, J. Geom. Anal. \textbf{34}, 287 (2024).

\bibitem{FLLS}\label{FLLS} R. L. Frank, M. Lewin,  E. H. Lieb and   R. Seiringer, \emph{Strichartz inequality for orthonormal functions}, J. Eur. Math. Soc. (JEMS) \textbf{16}(7), 1507-1526 (2014).
			
\bibitem{FS}\label{FS} R. L. Frank and J. Sabin, \emph{Restriction theorems for orthonormal functions, Strichartz inequalities, and uniform Sobolev estimates}, Amer. J. Math. \textbf{139}(6), 1649-1691 (2017).

\bibitem{FS1}\label{FS1} R. L. Frank and J. Sabin, \emph{Spectral cluster bounds for orthonormal systems and oscillatory integral operators in Schatten spaces}, Adv. Math. \textbf{317}, 157-192 (2017).


            
\bibitem{Gelfand-Shilov}\label{Gelfand-Shilov} I. M. Gel'fand, G. E. Shilov, \emph{Generalized Functions, vol. I: Properties and Operations}, Academic Press, New York (1964).

\bibitem{GMS} S. Ghosh, S. S. Mondal and J. Swain,  \emph{Strichartz inequality for orthonormal functions associated with special Hermite operator}, Forum Math. \textbf{36}, 655-669 (2024).


\bibitem{GIT2019} D. V. Gorbachev, V. I. Ivanov, and S. Y. Tikhonov, \emph{Positive $L^p$-bounded Dunkl-type generalized translation operator and its applications}, Constr. Approx. \textbf{49}, 555–605 (2019).

\bibitem{Grafakos2008} L. Grafakos, \emph{Classical Fourier Analysis}, 2nd ed., Grad. Texts in Math. \textbf{249}, Springer, New York, 2008.


\bibitem{Hoshiya} A. Hoshiya, \emph{Orthonormal Strichartz estimate for dispersive equations with potentials}, J. Funct. Anal. \textbf{286} (11), Paper No. 110425, 63 pp (2024).

\bibitem{hum} J. E. Humphreys, \emph{Reflection Groups and Coxeter Groups}, Cambridge Stud. Adv. Math. 29, Cambridge University Press, Cambridge, 1990. 


\bibitem{Jeu} M. F. E. de Jeu, \emph{The Dunkl transform}, Invent. Math. \textbf{113}, 147–162  (1993).

\bibitem{shyam} S. P. Jitendra Kumar, B. Pradeep, S. S.   Mondal,  and H. Mejjaoli, \emph{Restriction theorem for Fourier-Dunkl transform II: Paraboloid, sphere, and hyperboloid surfaces}, J. Geom. Anal. \textbf{34}, Paper No. 74, 36pp (2024).

\bibitem{Jiu-Li} J. Jiu, Z. Li, \emph{The dual of the Hardy space associated with the Dunkl operators}, Adv. Math. \textbf{412}, 108810 (2023).

\bibitem{KT} M. Keel and  T. Tao, \emph{Endpoints Strichartz estimates}, Amer. J. Math. \textbf{120}, 955-980 (1998).

\bibitem{lieb}  E. H. Lieb and   W. E. Thrring,  \emph{Bound on kinetic energy of fermions which proves stability of matter}, Phys. Rev. Lett. \textbf{35}, 687-689 (1975).
		
\bibitem{liebb}  E. H. Lieb and   W. E. Thrring, \emph{Inequalities for the moments of the eigenvalues of the Schr\"odinger hamiltonian and their relation to Sobolev inequalities}, Studies in Mathematical Physics, Princeton university press, page 269-303, Princeton (1976). 



\bibitem{Mejjaoli2009} H. Mejjaoli, \emph{Dispersion phenomena in Dunkl-Schr\"{o}dinger equation and applications},
Serdica Math. J. \textbf{35}(1), 25–60 (2009).


\bibitem{MS} S. S. Mondal and  M. Song, \emph{Orthonormal Strichartz inequalities for the $(k,a)$-generalized Laguerre operator and Dunkl operator}, Israel J. Math. (2025). https://doi.org/10.1007/s11856-025-2762-x 

\bibitem{nakamura}\label{nakamura} S. Nakamura, \emph{The orthonormal Strichartz inequality on Torus}, Trans. Amer. Math. Soc. \textbf{373}, 1455-1476 (2020).

\bibitem{Neil}\label{Neil} R. O'Neil, \emph{Convolution operators and $L(p,q)$ spaces}, Duke Math. J. \textbf{30}, 129–142 (1963).
 
\bibitem{Nguyen}\label{Nguyen} N. N. Nguyen, \emph{Fermionic semiclassical Lp estimates}, J. Funct. Anal. \textbf{286}, Paper No. 110196 (2024).


\bibitem{Ren-Zhang} T. Ren, A. Zhang, \emph{Improved spectral cluster bounds for orthonormal systems}, Forum Math. \textbf{36}, no. 5, 1383–1392 (2024).

\bibitem{ros}R. R\"{o}sler, \emph{Positivity of Dunkl's intertwining operator},  Duke Math. J. \textbf{98}, 445-463 (1999). 

		
\bibitem{ben}S. Ben Sa\"id, T. Kobayashi, and B. Orsted, \emph{Laguerre semigroup and Dunkl operators}, Compos. Math. \textbf{148}, 1265-1336 (2012). 
		
\bibitem{Ratna3}  S. Ben Sa\"id, A. K. Nandakumaran, and  P. K. Ratnakumar, \emph{Schr\"odinger propagator and the Dunkl Laplacian}, hal-00578446v1, 2011.

\bibitem{LS}M. Lewin and J. Sabin, \emph{The Hartree equation for infinitely many particles I. Wellposedness theory}, Comm. Math. Phys. \textbf{334}(1), 117–170 (2015).



\bibitem{Stein-Weiss}  E. M. Stein, G. Weiss, \emph{Introduction to Fourier Analysis on Euclidean Spaces}, Princeton Mathematical Series, vol. \textbf{32}, Princeton University Press, 1993.

\bibitem{Str}  R. S. Strichartz, \emph{Restrictions of Fourier transforms to quadratic surfaces and decay of solutions of wave equations}, Duke Math. J. \textbf{44}, 705-714 (1977).

\bibitem{Thangavelu-Xu2005} S. Thangavelu and  Y. Xu, \emph{Convolution operator and maximal function for Dunkl transform}, J. Anal. Math. \textbf{97}, 25-55 (2005).

\bibitem{Wang-Zhang-Zhang} X. Wang, A. Zhang, C. Zhang, \emph{Strichartz estimates for orthonormal systems on compact manifolds}, arxiv.org/abs/2503.08504 (2025).

\end{thebibliography}
\end{document}